\newcommand*\owedge{\mathpalette\@owedge\relax}
\newcommand*\@owedge[1]{%
  \mathbin{%
    \ooalign{%
      $#1\m@th\bigcirc$\cr
      \hidewidth$#1\m@th\wedge$\hidewidth\cr
    }%
  }%
}
 \newcommand{\C}{\ensuremath{\mathbb{C}}}
 \newcommand{\R}{\ensuremath{\mathbb{R}}}
 \newcommand{\RP}{\ensuremath{\mathbb{RP}}}
 \newcommand{\ba}{\begin{align*}}
 \newcommand{\ea}{\end{align*}}
 \newcommand{\na}{\nabla}
\newcommand{\la}{\langle}
\newcommand{\ra}{\rangle}
\newcommand{\Rc}{\mathrm{Rc}}
\newcommand{\Rm}{\mathrm{Rm}}
\newcommand{\lc}{\left(}
\newcommand{\rc}{\right)}
\newcommand{\ep}{\epsilon}
\newcommand{\tl}{\left(\partial_t-\Delta \right)}
\newcommand{\ka}{K\"ahler\,}
\newcommand{\raa}[1]{\kern-1.5ex\xrightarrow{\ \ #1\ \ }\phantom{}\kern-1.5ex}
\newcommand{\ras}[1]{\kern-1.5ex\xrightarrow{\ \ \smash{#1}\ \ }\phantom{}\kern-1.5ex}
\newcommand{\da}[1]{\bigg\downarrow\raise.5ex\rlap{\scriptstyle#1}}
 \def\ExtendSymbol#1#2#3#4#5{\ext@arrow 0099{\arrowfill@#1#2#3}{#4}{#5}}
 \def\ExtendSymbol#1#2#3#4#5{\ext@arrow 0099{\arrowfill@#1#2#3}{#4}{#5}}
\def\XXint#1#2#3{{\setbox0=\hbox{$#1{#2#3}{\int}$ }
\vcenter{\hbox{$#2#3$ }}\kern-.55\wd0}}
\numberwithin{equation}{section}
\newtheorem{thm}{Theorem}[section]
\newtheorem{cor}[thm]{Corollary}
\newtheorem{prop}[thm]{Proposition}
\newtheorem{lem}[thm]{Lemma}
\newtheorem{conj}[thm]{Conjecture}
\newtheorem{rem}[thm]{Remark}
\newtheorem{defn}[thm]{Definition}
\title{Ancient solutions to the K\"ahler Ricci flow}
\author{Yu Li}
\date{\today}
\begin{document}
\maketitle

\begin{abstract}
We give a complete classification of all $\kappa$-noncollapsed, complete ancient solutions to the K\"ahler Ricci flow with nonnegative bisectional curvature.
\end{abstract}


\section{Introduction}

In this paper, we study the $\kappa$-solutions to the Ricci flow. Recall that a $\kappa$-solution $(M^n,g(t))$ is an ancient solution to the Ricci flow
\begin{align*}
\frac{\partial}{\partial t} g(t)=-2\Rc(g(t))
\end{align*}
defined for $t \in (-\infty,0]$ such that for each $t$, $(M^n,g(t))$ is a $\kappa$-noncollapsed, complete Riemannian manifold with weakly PIC$_2$. In addition, the curvature operator $\Rm$ is uniformly bounded on the spacetime $M \times (-\infty,0]$. For a precise definition of $\kappa$-solutions, see Definition \ref{def:flow1}.

The concept of $\kappa$-solutions plays a vital role in the study of the singularity in the Ricci flow. In Perelman's celebrated work on the Poincar\'e conjecture and the geometrization conjecture \cite{Pe1}\cite{Pe2}\cite{Pe3}, it is proved that the high curvature regions of the Ricci flow on closed $3$-manifolds are modeled on $\kappa$-solutions. By investigating the geometric and topological properties of the $3$-dimensional $\kappa$-solutions, Perelman proved a canonical neighborhood theorem for the high curvature part and was able to continue the Ricci flow by performing the surgery.

In general, any ancient solution from a finite time singularity of a compact Ricci flow is $\kappa$-noncollapsed by the monotonicity formula of Perelman's entropy \cite{Pe1}. Therefore, to investigate the ancient solution, it is natural to add the $\kappa$-noncollapsing condition, which excludes the complicated local collapsing phenomenon at spatial or time infinity.

In dimension $3$, it follows from the famous Hamilton-Ivey pinching that any ancient solution must have nonnegative sectional curvature. For the higher dimensional cases, Chen \cite{Chen09} proved that the scalar curvature of any ancient solution is nonnegative. However, nonnegative scalar curvature alone is usually too weak to do any further analysis. Therefore, in many articles, various positivity conditions for the curvature are imposed.

The curvature assumption we consider here is the weakly PIC$_2$ condition, which was introduced by Brendle-Schoen in \cite{BS09}. In the $3$-dimensional case, the concept of weakly PIC$_2$ is equivalent to the nonnegativity of the curvature operator. However, the former is weaker than the latter in the higher dimensional case and hence the $\kappa$-solution defined here has a weaker assumption than that defined by Perelman \cite{Pe1}. Many important properties of $\kappa$-solutions still hold under the weakly PIC$_2$ assumption, see \cite{Bre09}\cite{Bre10b}\cite{CW15}, etc.

In dimension $2$, all $\kappa$-solutions are $\R^2$, $S^2$ and $\RP^2$, proved by Perelman \cite{Pe1}. In recent breakthrough works, Brendle \cite{Bre18} in the noncompact case and Brendle, Daskalopoulos and Sesum \cite{BDS20} in the compact case gave the complete classification of all $3$-dimensional $\kappa$-solutions. More precisely, they show that shrinking cylinders, the Bryant soliton, shrinking spheres, Perelman's solution, and their quotients are all possibilities. The classification of higher dimensional $\kappa$-solutions remains open. For recent developments in higher dimensions, see for example Brendle, Huisken and Sinestrari \cite{BHS11}, Li and Zhang \cite{LZ18}, Brendle and Naff\cite{BN20} and Cho and Li \cite{CL20}.

The analogous question is to classify all $\kappa$-solutions to the K\"ahler Ricci flow, which by definition are $\kappa$-noncollapsed, complete ancient solutions to the K\"ahler Ricci flow with bounded and nonnegative bisectional curvature (Definition \ref{def:flow 2}). Notice that the weakly PIC$_2$ condition is replaced by nonnegative bisectional curvature, which is weaker and more natural in the K\"ahler setting. In the compact case, all $\kappa$-solutions to the K\"ahler Ricci flow are classified in \cite{DZ20b}.

The main result of the paper is the following classification theorem for the general case.

\begin{thm} \label{thm:main1}
Let $(M^m,g(t))_{t \in (-\infty,0]}$ be a $\kappa$-noncollapsed, complete ancient solution to the K\"ahler Ricci flow with nonnegative bisectional curvature. Then it is isometrically biholomorphic to $N^k \times \C^{m-k}$, where $N$ is a compact Hermitian symmetric space.
\end{thm}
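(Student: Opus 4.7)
The plan is to lift to the universal cover $\tilde{M}$, produce a holomorphic isometric splitting $\tilde{M} \cong N \times \C^{m-k}$ with $N$ a compact simply-connected Hermitian symmetric space, and then deduce that $\Gamma := \pi_1(M)$ acts as a finite group of holomorphic isometries. For the splitting, I will apply the K\"ahler analog of Hamilton's strong maximum principle due to Bando and Mok: for any K\"ahler-Ricci flow with nonneg bisectional curvature, the null distribution of the bisectional curvature is parallel, is preserved by the flow, and integrates to a holomorphic isometric de~Rham decomposition of the universal cover. This produces $\tilde{M} = N_1 \times \cdots \times N_r \times \C^{m-k}$ in which each $N_i$ is a complete, simply-connected K\"ahler manifold with bounded nonneg bisectional curvature, positive Ricci at every point, is $\kappa$-noncollapsed on all scales, and itself carries an ancient K\"ahler-Ricci flow.

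The main obstacle---and the K\"ahler analog of a still-open question in the Riemannian higher-dimensional $\kappa$-solution program---is to show each $N_i$ is compact. Assume for contradiction that some $N_i$ is noncompact. Perelman's reduced volume monotonicity, applied to $N_i$, produces a nontrivial asymptotic shrinking gradient K\"ahler-Ricci soliton as a smooth pointed Cheeger-Gromov limit of parabolic rescalings $(N_i, \lambda_j^{-1} g(\lambda_j t), p_j)$ with $\lambda_j \to \infty$; this limit inherits $\kappa$-noncollapsedness and bounded nonneg bisectional curvature. Since the limit has the same complex dimension as $N_i$, a standard topological argument (an open embedding from a compact manifold of equal dimension into a connected manifold is surjective) shows that the limit soliton is itself noncompact. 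I then want to classify noncompact, $\kappa$-noncollapsed gradient shrinking K\"ahler-Ricci solitons with nonneg bisectional curvature: applying Mok's strong maximum principle in the soliton setting combined with the compact classification of \cite{DZ20b}, any such shrinker must split off a nontrivial flat $\C^{\ell}$ factor. Finally, a rigidity argument in the parabolic category---propagating the flat factor of the asymptotic soliton back to $N_i$ via the K\"ahler trace Harnack inequality together with the strong maximum principle along the flow---produces a null direction of the bisectional curvature on $N_i$, contradicting the strict positivity of $\mathrm{Ric}$ on $N_i$. This backward propagation step is where I expect the most delicate work.

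With compactness in hand, the classification of \cite{DZ20b} identifies each $N_i$ as a simply-connected, compact, irreducible Hermitian symmetric space, so $N := N_1 \times \cdots \times N_r$ has the desired form. For the finite-quotient statement, $\Gamma$ acts on $\tilde{M} = N \times \C^{m-k}$ by holomorphic isometries preserving the de~Rham decomposition, and its projection to $\mathrm{Isom}(\C^{m-k})$ cannot contain any nontrivial translation: a single translation would introduce an $S^{1}$-factor in $M$ and produce $\mathrm{vol}(B_{r}) = O(r^{2m-1})$ as $r \to \infty$, violating the $\kappa$-noncollapsedness $\mathrm{vol}(B_r) \geq \kappa\, r^{2m}$ on all scales. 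Hence $\Gamma$ is a discrete subgroup of the compact group $\mathrm{Isom}(N) \times U(m-k)$, so $\Gamma$ is finite and $M = \tilde{M}/\Gamma$ is a finite quotient as claimed.
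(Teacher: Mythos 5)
The decisive gap is that your argument quietly assumes uniformly bounded curvature, which is not a hypothesis of Theorem \ref{thm:main1} and whose proof is the main content of the paper. Every tool you invoke for a putative noncompact factor $N_i$ --- Perelman's reduced-volume/asymptotic shrinking soliton, the trace Harnack inequality, even the compactness needed to extract the blow-down limit --- requires curvature bounds on compact time intervals; moreover the paper's own route to asymptotic shrinkers goes through the Type-I bound (Lemma \ref{lem:type1}), which is itself established only for $\kappa$-solutions, i.e.\ after bounded curvature is known. The paper supplies this missing ingredient by a canonical neighborhood theorem (Theorem \ref{thm:cano2}) together with Proposition \ref{prop:cano}: high-curvature regions carry local singular fibrations by compact Hermitian symmetric fibers, and since each fiber is a complex submanifold, $\int_\Sigma \omega^L$ is computed by the K\"ahler form and is unchanged between overlapping charts by Stokes' theorem, which forces the curvature scales at different points to be comparable and yields the global bound (Proposition \ref{prop:bdd}). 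Nothing in your sketch replaces this step, so the compactness argument for $N_i$ cannot even get started under the stated hypotheses.

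Even granting bounded curvature, the step you yourself flag as delicate --- propagating the flat $\C^{\ell}$ factor of the asymptotic soliton back to a null bisectional-curvature direction on $N_i$ at finite times --- is a genuine gap, not a technicality: nullity appearing only in a blow-down limit as $t\to-\infty$ is not nullity at a spacetime point of the flow, which is what the strong maximum principle and Harnack rigidity statements require, and no standard argument bridges this. The paper's mechanism is different: it first upgrades $BK\ge 0$ to weakly PIC$_2$ (Theorem \ref{thm:pic2}), then classifies Type-I $\kappa$-solutions (Theorem \ref{thm:main2}) by showing the asymptotic shrinkers at the scales $t_i=-4^i$ all carry the same model $N^k\times\R^{n-k}$, building compatible local fibrations, gluing them to a global fibration of $M\cong\R^n$ with compact fiber $N$, and contradicting Borel--Serre; the splitting is thus obtained topologically, not by backward propagation of curvature nullity. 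Finally, your finite-quotient step misuses the definition of $\kappa$-noncollapsing: $|B_r|\ge\kappa r^{2m}$ is only available when $R\le r^{-2}$ on $B_r$, which fails at large $r$ on $N\times\C^{m-k}$ at a fixed time since $R$ is a positive constant there; the correct argument (Lemma \ref{lem:quo}) runs the flow to time $-r^2$, where $R\sim r^{-2}$, and observes that a translation in $\Gamma$ caps the volume of $B_{g(-r^2)}(x,r)$ at order $r^{n-1}$, contradicting noncollapsing as $r\to\infty$.
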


Theorem \ref{thm:main1} extends our earlier result regarding the classification of $\kappa$-solutions on K\"ahler surfaces \cite[Theorem $1.3$]{CL20}. In \cite{CL20}, a key observation is that any complete, complex $2$-dimensional ancient solution to the K\"ahler Ricci flow with nonnegative bisectional curvature automatically has nonnegative curvature operator \cite[Lemma $4.6$]{CL20}. Here, we obtain a similar curvature improvement (see Theorem \ref{thm:pic2}), which states that any complete ancient solution to the K\"ahler Ricci flow with nonnegative bisectional curvature has weakly PIC$_2$. Therefore, any $\kappa$-solution to the K\"ahler Ricci flow is also a $\kappa$-solution to the (real) Ricci flow.

Another important observation is that any $\kappa$-solution to the K\"ahler Ricci flow must be of Type-I (see Lemma \ref{lem:type1}), which was initially proved by Deng-Zhu \cite[Lemma $2.5$]{DZ20b} in the compact case. Essentially, this follows from the classical point-picking argument and the nonexistence of a $\kappa$-noncollapsed, nonflat, steady K\"ahler Ricci soliton with nonnegative bisectional curvature \cite{DZ20a}. Here, the K\"ahler condition is crucial, and one cannot expect the same conclusion to hold for the real case (e.g., the Bryant soliton).

Therefore, to prove Theorem \ref{thm:main1}, it is essential to understand all Type-I $\kappa$-solutions to the Ricci flow. One motivation is the similar classification result of Ricci shrinkers \cite{MW17}\cite{LN20}. More precisely, it was proved in \cite[Theorem $3.1$(ii)]{LN20} that any Ricci shrinker with weakly PIC$_2$ is locally symmetric. On the one hand, Li-Wang proved in \cite{LW19} that any Ricci shrinker is $\kappa$-noncollapsed, a result previously established by Naber \cite{Na10} for Ricci shrinkers with bounded curvature. On the other hand, it is well-known that any Ricci shrinker with bounded curvature can be regarded as a self-similar, Type-I, ancient solution to the Ricci flow. Therefore, it is natural to expect that any Type-I $\kappa$-solution is also locally symmetric. We confirm this by the following theorem, see also \cite{Zh18} and \cite{Hal19} for the $3$-dimensional case.

\begin{thm} \label{thm:main2}
Let $(M^n,g(t))_{t \in (-\infty,0]}$ be a $\kappa$-noncollapsed, Type-I, complete ancient solution to the Ricci flow with weakly \emph{PIC$_2$}. Then it is isometric to a finite quotient of $N^k \times \R^{n-k}$, where $N$ is a simply connected compact symmetric space.
\end{thm}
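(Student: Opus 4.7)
The plan is to first extract an asymptotic shrinking Ricci soliton at $t=-\infty$ via Naber's theorem, apply the shrinker classification of Li--Naber to it, and then propagate the resulting locally symmetric structure back to the ancient solution by combining Brendle's strong maximum principle with an entropy-rigidity argument.

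For the extraction step, since $(M^n,g(t))$ is Type-I and $\kappa$-noncollapsed, I choose any sequence $t_i\to-\infty$ and consider the parabolic rescalings $g_i(s):=|t_i|^{-1}g(|t_i|s)$. The Type-I bound yields uniform curvature control and $\kappa$-noncollapsing provides the injectivity radius estimate, so by Hamilton's compactness theorem together with Naber's asymptotic soliton construction, a subsequence converges in the pointed Cheeger--Gromov--Hamilton sense to a complete, non-flat gradient shrinking Ricci soliton $(M_\infty^n,g_\infty(s),p_\infty,f_\infty)$ on $s\in(-\infty,0)$. Weakly \emph{PIC$_2$} is preserved by the Ricci flow (Brendle) and is closed under pointed Cheeger--Gromov limits, so the soliton inherits weakly \emph{PIC$_2$}; it is also $\kappa$-noncollapsed by Li--Wang. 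Applying \cite[Theorem 3.1(ii)]{LN20} then identifies $(M_\infty,g_\infty)$ as a finite quotient of $N^k\times\R^{n-k}$ with $N$ a simply-connected compact symmetric space, the Euclidean factor carrying the standard Gaussian soliton potential.

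Next I lift this split structure back to the original flow. The asymptotic soliton has a curvature operator with a $\binom{n-k}{2}$-dimensional null space spanned by $2$-forms tangent to the flat factor. By Hamilton's strong maximum principle, in the form extended by Brendle to weakly \emph{PIC$_2$} Ricci flows, the kernel of the curvature operator is parallel and has constant rank in space and time along any complete solution. Combined with the Cheeger--Gromov convergence of the rescaled flow to the soliton and the de Rham decomposition, this forces the universal cover of $(M,g(t))$ to split isometrically and time-equivariantly as $\tilde N^k\times\R^{n-k}$, where the $\R^{n-k}$ factor is stationary flat and $\tilde N$ carries a complete, Type-I, $\kappa$-noncollapsed ancient solution with weakly \emph{PIC$_2$} whose asymptotic soliton is $N$. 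Compactness of $N$ and the Type-I diameter bound force $\tilde N$ to be compact and diffeomorphic to $N$.

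On the compact factor I would apply Perelman's $\nu$-entropy together with an equality-case rigidity. By scale invariance and the convergence of the rescaled flow to $(N,g_N)$, one has $\nu(\tilde g(t))\to\nu(g_N)$ as $t\to-\infty$, while monotonicity yields $\nu(\tilde g(t))\ge\nu(g_N)$ for every $t$. Since $g_N$ is an Einstein symmetric metric realizing a critical value of $\nu$, I expect to close the argument by combining Brendle's forward-convergence theorem for compact weakly \emph{PIC$_2$} Ricci flows (the forward limit, modulo rescaling, is locally symmetric) with Kotschwar's backward uniqueness, applied to $\tilde g$ and the canonical shrinking soliton on $N$; matching the two flows at the common Einstein limit then forces them to coincide. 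Quotienting by the deck transformations produces the desired finite quotient of $N^k\times\R^{n-k}$. The main obstacle is precisely this last rigidity step: ruling out a strict inequality $\nu(\tilde g(t))>\nu(g_N)$ despite the correct backward asymptotic requires genuine use of the locally symmetric structure of the limit, through Brendle's classification of compact weakly \emph{PIC$_2$} Ricci flows or a careful $\mathcal L$-geodesic analysis in the spirit of Naber and Yokota, rather than the monotonicity of $\nu$ alone.
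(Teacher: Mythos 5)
Your first step (extracting the asymptotic shrinker via Naber, noting it is non-flat, $\kappa$-noncollapsed, weakly PIC$_2$, and applying \cite[Theorem 3.1(ii)]{LN20}) agrees with the paper. The proof breaks at the propagation step. The strong maximum principle of Hamilton--Brendle controls the kernel of the curvature operator of the solution $(M,g(t))$ itself, and only if that kernel is nontrivial at some spacetime point; nothing in your argument produces such a point. The splitting you invoke lives on the blow-down limit: the null directions of $\mathrm{Rm}$ of the asymptotic soliton are not inherited by $(M,g(t))$ at finite times, since Cheeger--Gromov convergence of the rescalings $|t_i|^{-1}g(|t_i|s)$ only constrains the geometry after rescaling and passing to the limit. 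A priori the ancient solution can have strictly positive curvature operator (indeed, after the soul/splitting reduction of Cabezas-Rivas--Wilking it is diffeomorphic to $\R^n$ and one may assume positive curvature), while its blow-down still splits off a compact symmetric factor $N$. Ruling out exactly this scenario is the heart of the theorem, and it cannot be done by a maximum principle: the paper does it topologically, showing that the asymptotic shrinkers at the dyadic scales $t_i=-4^i$ all have the \emph{same} compact factor $N$ (Lemma \ref{lem:close}, which also requires controlling finite quotients via Lemmas \ref{lem:fund} and \ref{lem:finite}), that the induced local fibrations on an exhaustion $V_I\subset V_{I+1}\subset\cdots$ have simply-connected fibers (using simple-connectedness of $M$ and the Type-I distance comparison), and that these glue to a global fibration of $\R^n$ with compact fiber $N$, contradicting Borel--Serre \cite{BS50}. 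None of these steps is replaced by anything in your outline, and the quotient issue (the asymptotic shrinker may be $(N^k\times\R^{n-k})/\Gamma$ with $\Gamma\neq 1$) is not addressed at all.

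The second half of your plan also does not close. Even granting the splitting, your treatment of the compact factor via $\nu$-entropy rigidity plus Kotschwar backward uniqueness is, as you acknowledge, incomplete: monotonicity gives $\nu(\tilde g(t))\ge\nu(g_N)$ but nothing forces equality at a finite time, and matching $\tilde g$ with the canonical shrinking soliton by backward uniqueness requires the two flows to agree at some time, which is precisely what is unproved. The paper avoids this entirely: in the compact case it first shows the universal cover is compact (via the asymptotic shrinker and the distance comparison of Lemma \ref{lem:asym}), then argues by Berger's holonomy classification --- generic holonomy forces strictly PIC$_2$ and hence the round sphere by Ni, holonomy $\mathrm{U}(n/2)$ reduces to the K\"ahler result of Deng--Zhu, and the remaining holonomies force Einstein and hence symmetric by Brendle. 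So the two places where your proposal departs from the paper are exactly the places where it has genuine gaps: the transfer of the splitting from the limit to the solution, and the rigidity of the compact factor.
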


Theorem \ref{thm:main1} follows from Theorem \ref{thm:main2} immediately, provided that the curvature is uniformly bounded (Theorem \ref{thm:ka}). To complete the proof of Theorem \ref{thm:main1}, we next show that any solution in Theorem \ref{thm:main1} must have bounded curvature. Here, we follow the strategy in \cite{CL20} to prove first a canonical neighborhood theorem, which states that any high curvature region in the flow is modeled on a $\kappa$-solution to the K\"ahler Ricci flow (Theorem \ref{thm:cano2}). The key point is to show that any K\"ahler manifold, whose high curvature regions are modeled on $\kappa$-solutions, must have uniformly bounded curvature, see Proposition \ref{prop:cano}. Notice that in the $3$-dimensional case, a similar conclusion is proved by the fact that any complete manifold with nonnegative sectional curvature cannot have smaller and smaller $\ep$-necks \cite[Proposition $2.2$]{CZ06}. In our case, we show that the high curvature part locally has a singular fibration structure on an orbifold, where each fiber is a simply connected, compact, Hermitian symmetric space $N$ or its quotient. By analyzing the K\"ahler form, we show that the volumes of any two regular fibers are uniformly comparable and hence a global bound for the curvature can be obtained, see Proposition \ref{prop:cano}. With the canonical neighborhood theorem, the rest of the proof follows from similar arguments in \cite{CL20} by showing the curvature is bounded first for each time slice and then for any compact time interval. Therefore, the global curvature bound follows from the Harnack inequality.

Now, we briefly discuss the proof of Theorem \ref{thm:main2}. Notice that the conclusion on compact manifolds has been proved by Ni \cite{N09}, under the strictly PIC$_2$ condition. By using similar arguments, one can easily prove Theorem \ref{thm:main2} on compact manifolds; see Theorem \ref{thm:cpt}. For the noncompact case, we analyze the asymptotic Ricci shrinker, which is helpful to understand the geometric behavior as $t \to -\infty$. For this purpose, we consider the moduli space $\mathcal M$, see Definition \ref{def:modu}, such that any asymptotic Ricci shrinker of a Type-I $\kappa$-solution belongs to $\mathcal M$. 

To prove Theorem \ref{thm:main2}, we may assume the $\kappa$-solution $(M^n,g(t))$ is simply connected and irreducible, by Lemma \ref{lem:quo} and an inductive argument. If the solution is nonflat, then we can derive a contradiction. Indeed, we fix a point $p$ and a time sequence $t_i=-4^i$ such that the geometry on $B_i=B_{g(t_i)}(p, \ep^{-1} \sqrt{|t_i|})$ is modeled on an element in $\mathcal M$, if $i$ is sufficiently large. By a careful analysis of $\mathcal M$ and the structure of $(M^n,g(t))$, we prove that all $B_i$ are close to a fixed $N^k \times \R^{n-k}$. The difficulty here is to prove the uniqueness of the model space and exclude any possible quotient. In particular, there exists a local fibration structure on $B_i$. It is clear from the distance comparison, see Theorem \ref{lem:asym}(i), that $\{B_i\}$ forms an exhaustion of $M$. Therefore, a standard argument yields a global fibration on $M$ with fiber $N$. On the other hand, $M$ is known to be diffeomorphic to $\R^n$, see \cite[Theorem $1.2$]{CW15}. Therefore, we obtain a contradiction since $\R^n$ does not admit a fibration with nontrivial compact fibers \cite{BS50}.

From Theorem \ref{thm:main2}, we know that any Type-I $\kappa$-solution is essentially a Ricci shrinker. Notice that for $\kappa$-solutions, one can rescale each factor in the decomposition of $N$, but for Ricci shrinkers, the scaling is fixed so that each factor has the same Einstein constant. For Type-II $\kappa$-solutions, the classification is much more difficult. It is still unclear how a Type-II $\kappa$-solution is related to a steady soliton.

This paper is organized as follows. In Section 2, we review some basic properties of Type-I $\kappa$-solutions and Ricci shrinkers. Then we discuss the structure of the singular fibration and prove Theorem \ref{thm:main2}. In Section 3, we obtain the curvature improvement for the K\"ahler Ricci flow and prove Theorem \ref{thm:main1}. In Section 4, we propose a conjecture for general Type-I ancient solutions.
\\
\\
\textbf{Acknowledgements:} The author would like to thank Prof. Xiuxiong Chen, Prof. Bing Wang and Prof. Simon Brendle for helpful discussions and comments. The author is supported by YSBR-001, NSFC-12201597 and research funds from the University of Science and Technology of China and the Chinese Academy of Sciences.

\section{Classification of Type-I $\kappa$-solutions}

\begin{defn}\label{def:flow1} Let $(M^n,g(t))_{t \in (-\infty,0]}$ be a complete ancient solution to the Ricci flow.
\begin{enumerate}[label=(\roman*)]
\item $(M^n,g(t))_{t \in (-\infty,0]}$ is of Type-I if there exists a constant $C_0>0$ such that
\begin{align}
|\emph{Rm}|(x,t) \le \frac{C_0}{1+|t|} \label{E200}
\end{align}
for any $(x,t) \in M \times (-\infty,0]$.

\item Given $\kappa>0$, we say that $(M^n ,g(t))_{t \in (-\infty,0]}$ is $\kappa$-noncollapsed if for any ball $B_{g(t)}(x,r)$ satisfying $R(y,t) \le r^{-2}$ for all $ y \in B_{g(t)}(x,r)$, we have
\begin{align*}
|B_{g(t)}(x,r)|_{g(t)}\ge \kappa r^n.
\end{align*}

\item $(M^n,g(t))_{t \in (-\infty,0]}$ has weakly \emph{PIC$_2$} if for any $(x,t) \in M \times (-\infty,0]$ and any $\zeta,\eta \in T_x M \otimes \C$, we have at $(x,t)$,
\begin{align*}
\emph{Rm}(\zeta,\eta,\bar \zeta,\bar \eta) \ge 0. 
\end{align*}
For other equivalent definitions of weakly \emph{PIC$_2$}, see \emph{\cite[Chapter $7$]{Bre10b}}.

\item \emph{($\kappa$-solution to the Ricci flow)}
$(M^n,g(t))_{t \in (-\infty,0]}$ is called a $\kappa$-solution if it has weakly \emph{PIC$_2$}, uniformly bounded curvature and is $\kappa$-noncollapsed.
\end{enumerate}
\end{defn}

Next, we recall the following definition of Ricci shrinkers.
\begin{defn} \emph{(Ricci shrinker)}
A Ricci shrinker $(M^n,g,f)$ is a complete Riemannian manifold equipped with a smooth potential function $f$ satisfying
\begin{align*} 
\Rc+\emph{Hess}\,f=\frac{1}{2}g,
\end{align*}
where $f$ is normalized so that
\begin{align*} 
R+|\nabla f|^2=f. 
\end{align*}
\end{defn}
Any Ricci shrinker can be regarded as a self-similar ancient solution to the Ricci flow, see \cite[Chapter $4$]{CLN06}. It is proved in \cite{LW19} that any Ricci shrinker is $\kappa$-noncollapsed for some $\kappa>0$. Moreover, we have the following classification of Ricci shrinkers with weakly PIC$_2$ from \cite{LN20}, see also \cite[Corollary $4$]{MW17}.

\begin{lem} [Theorem $3.1$ (ii) of \cite{LN20}]\label{lem:shrinker}
Let $(M^n,g,f)$ be a Ricci shrinker with weakly \emph{PIC$_2$}. Then $(M^n,g)$ is isometric to a finite quotient of $N^k \times \R^{n-k}$, where $N$ is a simply connected compact symmetric space.
\end{lem}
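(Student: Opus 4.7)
The plan is to combine a strong maximum principle for the weakly PIC$_2$ cone along the associated self-similar Ricci flow with a splitting theorem for Ricci shrinkers, then use Berger--Simons holonomy to identify the non-split factor as a symmetric space.

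First, I would view the shrinker as a self-similar ancient solution $(M^n,g(t))$ to the Ricci flow, for which weakly PIC$_2$ is preserved (by Brendle--Schoen). Applying Brendle's strong maximum principle to the curvature evolution on the PIC$_2$ cone yields, for any $t$ after the initial time, that the null directions of the curvature operator (in the PIC$_2$ sense) form a parallel subbundle of $\Lambda^2 TM$ that is time-invariant. Pulling this back by the self-similar diffeomorphisms, the same parallel structure exists on the original shrinker. Coupled with Berger--Simons holonomy classification, this dichotomizes the problem: either (a) the tangent bundle $TM$ itself admits a nontrivial parallel splitting $TM=T_1\oplus T_2$, or (b) $(M,g)$ is irreducible, and the constraint on holonomy coming from the null subbundle in $\Lambda^2$ forces $(M,g)$ to be locally symmetric.

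In case (a), I would upgrade the de Rham splitting of $(M,g)$ to a splitting of the shrinker structure itself. The point is that the shrinker equation $\mathrm{Ric}+\mathrm{Hess}\,f=\tfrac{1}{2}g$ together with the parallel decomposition forces $\mathrm{Hess}\,f$ to be block-diagonal with respect to $T_1\oplus T_2$, so $\nabla f$ preserves both distributions. Invoking the splitting result for shrinkers (as in Munteanu--Wang) then gives an isometric product decomposition of the universal cover as $(M_1,g_1,f_1)\times(M_2,g_2,f_2)$, with each factor a shrinker (or flat) carrying weakly PIC$_2$. An induction on dimension reduces to the irreducible case (b), and a flat irreducible factor is just $\R$, which assembles with the remaining product structure to give the $\R^{n-k}$ factor.

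In case (b), the locally symmetric, irreducible shrinker has parallel Ricci, so $(M,g)$ is Einstein. The shrinker equation then says $\nabla^2 f=cg$ for a constant $c$; if $c\neq 0$ the potential is nontrivial and by \eqref{E201a} together with Hamilton's strong maximum principle on $\mathrm{Ric}$, non-negativity of Ricci must be strict (else the null direction would split off, contradicting irreducibility). Strictly positive Ricci forces compactness by Myers, and compact, simply-connected, irreducible symmetric spaces with weakly PIC$_2$ are classified, each admitting a shrinker structure. If instead $c=0$ the manifold is Ricci flat, so by irreducibility and weakly PIC$_2$ the universal cover is $\R$, recovered as a trivial flat factor. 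Taking quotients by the (finite) deck group yields the stated conclusion.

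The main obstacle I anticipate is executing the splitting step of case (a) rigorously in the noncompact shrinker setting: the strong maximum principle only produces a parallel null subbundle in $\Lambda^2$, and one must promote this to a parallel splitting of $TM$ that is simultaneously compatible with $\nabla f$ and yields a global (not just local) metric product. A secondary difficulty is certifying that every irreducible symmetric, non-flat shrinker in the output list truly has weakly PIC$_2$, which requires checking the classification case-by-case (sphere, $\mathbb{CP}^n$ and its generalizations, etc.).
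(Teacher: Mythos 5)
The paper does not prove this lemma at all: it is quoted verbatim from \cite{LN20} (Theorem 3.1(ii)), with \cite[Corollary 4]{MW17} given as an alternative source, so the only internal comparison available is with the analogous holonomy argument the paper runs for compact Type-I $\kappa$-solutions in Theorem \ref{thm:cpt}. Your overall strategy (self-similar flow, Brendle--Schoen strong maximum principle, Berger holonomy, de Rham splitting plus induction) is indeed the same style of argument, but as written it has a genuine gap at the decisive point. The strong maximum principle only produces a parallel structure when the weakly PIC$_2$ operator is somewhere \emph{degenerate}. In your case (b) you assert that ``the constraint on holonomy coming from the null subbundle in $\Lambda^2$ forces $(M,g)$ to be locally symmetric,'' but if the shrinker has \emph{strictly} PIC$_2$ the null set is empty, the holonomy is generically $\mathrm{SO}(n)$, and no such constraint exists. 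That is precisely the hard case, and it is where the content of the cited theorem lies: one must first prove that an irreducible shrinker with strictly positive (complex) sectional curvature is \emph{compact} --- this is the Munteanu--Wang theorem \cite{MW17}, a nontrivial input your sketch never supplies --- and only then can one invoke the Brendle--Schoen convergence theorem (or \cite{N09}) to identify a spherical space form. Your appeal to Myers is circular here, since you only get an Einstein metric after you already know local symmetry. In the compact setting of Theorem \ref{thm:cpt} the paper can pass directly from strict PIC$_2$ to the sphere; for a noncompact shrinker, compactness is the crux and cannot be skipped. The Kähler holonomy case $\mathrm{U}(n/2)$ likewise needs its own classification input (Kähler shrinkers with nonnegative bisectional curvature) rather than following from the null-subbundle dichotomy.

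Two further points. First, in the genuinely degenerate-but-irreducible situation, ``null directions parallel $+$ Berger--Simons $\Rightarrow$ locally symmetric'' is not automatic; the known route (as in \cite[Proposition 6.6]{Bre19}) is to show that non-symmetric, non-Kähler reduced holonomy forces the metric to be Einstein and then apply Brendle's theorem \cite{Bre10a} that Einstein manifolds with weakly PIC are locally symmetric --- you should route the argument through that statement rather than through a direct holonomy claim. Second, your handling of the Einstein subcase with $c\neq 0$ is incorrect as written: a complete metric with $\mathrm{Hess}\,f=cg$, $c\neq 0$ and $f$ nonconstant splits off a line or is flat Euclidean space (Tashiro-type rigidity), contradicting irreducibility; it does not lead to ``strict Ricci positivity plus Myers.'' That step is repairable, but the missing compactness argument in the strictly PIC$_2$ case is a real gap, not a technicality.
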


For Ricci shrinker $(N^k \times \R^{n-k},\bar g)$, the metric $\bar g=g_N \times g_E$, where $g_N$ is an Einstein metric with Einstein constant $1/2$ and $g_E$ is the Euclidean metric. If $n-k \ge 1$, then the potential function $f$ of the Ricci shrinker is $f=\frac{|z|^2}{4}+\frac{k}{2}$, where $z \in \R^{n-k}$. If $n=k$, then the Ricci shrinker is compact and $f=\frac n 2$. In the following, we will use $\bar g$ to denote both the metric of the Ricci shrinker $N^k \times \R^{n-k}$ and the induced metric on the quotient $(N^k \times \R^{n-k})/\Gamma$. Also, for any Ricci shrinker $(N^k \times \R^{n-k})/\Gamma$, the associated Ricci flow $((N^k \times \R^{n-k})/\Gamma, \bar g(t))_{t\in (-\infty,0)}$, where $\bar g(-1)=\bar g$, is implicitly understood.

Next, we recall that the fundamental group of any Ricci shrinker is always finite; see \cite{Lott03} \cite{Zhang07} \cite{FLGR08} \cite{WW07}, etc. In fact, the order can be controlled by the $\kappa$-noncollapsing condition; see \cite[Proposition 3]{CL16}. More precisely,

\begin{lem} \label{lem:fund1}
For any $\kappa$-noncollapsed Ricci shrinker $(M^n,g,f)$, there exists a positive constant $C=C(n,\kappa)<\infty$ such that the order of the fundamental group satisfies
\begin{align*} 
|\pi_1(M) |\le C.
\end{align*}
\end{lem} 

Now, we define the following moduli space.
\begin{defn} \label{def:modu}
The moduli space $\mathcal M(n,\kappa)$ consists of all $\kappa$-noncollapsed, nonflat, Ricci shrinkers in the form of $(N^k \times \R^{n-k})/\Gamma$, where $N$ is a simply connected compact symmetric space and $\Gamma \subset \emph{Iso}(N^k \times \R^{n-k})$ is a finite subgroup which acts freely on $N^k \times \R^{n-k}$.
\end{defn}
It follows from Lemma \ref{lem:fund1} that for any $(N^k \times \R^{n-k})/\Gamma \in \mathcal M(n,\kappa)$,
\begin{align} 
|\Gamma |\le C(n,\kappa).
\label{E202a}
\end{align}

\begin{lem} \label{lem:finite}
There are finitely many isometry classes in the moduli space $\mathcal M(n,\kappa)$.
\end{lem}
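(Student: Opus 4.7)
The plan is to enumerate the elements of $\mathcal{M}(n,\kappa)$ by a finite amount of discrete data: the pair $(k,N)$ together with the conjugacy class of $\Gamma$ inside $\text{Iso}(N^{k}\times\R^{n-k})$. First, combining Lemma \ref{lem:fund} with \eqref{E202a} gives $|\Gamma|\le C(n,\kappa)$; since $1\le k\le n$ (the case $k=0$ being flat and hence excluded), and the Cartan classification yields only finitely many simply-connected compact symmetric spaces of each dimension, and since the Ricci shrinker equation forces $\mathrm{Ric}_{N}=\tfrac12 g_{N}$ (which pins down the scaling on each irreducible de Rham factor of $N$), the pair $(k,N)$ ranges over a finite list.

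Next I would fix such a pair. Because $N$ has no flat de Rham factor, the de Rham decomposition theorem gives $\text{Iso}(N^{k}\times\R^{n-k})=\text{Iso}(N)\times E(n-k)$ with $E(n-k)=O(n-k)\ltimes\R^{n-k}$. For $\gamma\in\Gamma$ write $\gamma=(\phi_{\gamma},A_{\gamma},v_{\gamma})$. A Bieberbach-style observation is that $\gamma\mapsto(\phi_{\gamma},A_{\gamma})$ is injective, since no nontrivial pure translation has finite order, while $\gamma\mapsto v_{\gamma}$ is a $1$-cocycle on $\Gamma$ with values in the $\Gamma$-module $\R^{n-k}$ (via the $O(n-k)$-action). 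Since $|\Gamma|$ is invertible in $\R$, one has $H^{1}(\Gamma,\R^{n-k})=0$, so the cocycle is a coboundary: there exists $w\in\R^{n-k}$ with $v_{\gamma}=A_{\gamma}w-w$ for all $\gamma$, and conjugating by the translation $T_{w}$ moves $\Gamma$ into the compact group $\text{Iso}(N)\times O(n-k)$.

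The heart of the argument is the finiteness, up to conjugation, of finite subgroups of bounded order in the compact Lie group $G=\text{Iso}(N)\times O(n-k)$. I would invoke the classical rigidity of representations of finite groups: for each finite $H$, $\text{Hom}(H,G)$ is a compact real-analytic subset of $G^{H}$, and the vanishing $H^{1}(H,\mathfrak{g})=0$ makes every $G$-conjugation orbit open in $\text{Hom}(H,G)$, producing only finitely many orbits. Letting $H$ range over the finite list of abstract isomorphism types of order at most $C(n,\kappa)$ then yields finitely many conjugacy classes of $\Gamma\subset G$. Conjugate subgroups in $\text{Iso}(N^{k}\times\R^{n-k})$ give isometric quotients, so the finiteness of isometry classes in $\mathcal{M}(n,\kappa)$ follows. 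The main obstacle is this final rigidity input; everything preceding it is bookkeeping plus the cohomology trick that absorbs the translational part, which is itself a soft version of the first Bieberbach theorem.
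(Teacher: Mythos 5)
Your argument is correct and takes essentially the same route as the paper: a finite list of possible pairs $(k,N)$ from the classification of simply-connected irreducible compact symmetric spaces with Einstein constant $1/2$, a translation conjugating $\Gamma$ into the compact group $\text{Iso}(N)\times O(n-k)$, and finiteness of conjugacy classes of finite subgroups of bounded order in that compact group. The only differences are cosmetic: the paper produces the fixed point in $\R^{n-k}$ by a center-of-mass construction (equivalent to your $H^1(\Gamma,\R^{n-k})=0$ coboundary trick) and cites \cite[Corollary 8.1.7]{BK81} for the conjugacy-class finiteness, which you instead prove directly via Weil-type rigidity ($H^1(H,\mathfrak{g})=0$ making conjugation orbits open in the compact space $\mathrm{Hom}(H,G)$).
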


\begin{proof}
For any simply connected compact symmetric space $N$, we have by de Rham's decomposition theorem \cite[Chapter IV, Theorem $6.2$]{KoNo63}
\begin{align*} 
N=N_1 \times N_2 \times \cdots \times N_l,
\end{align*}
where each $N_i$ is a simply connected, irreducible, compact symmetric space. Moreover, each factor $N_i$ has the same Einstein constant $1/2$. From the classification, see \cite[Chapter $7$]{Be08}, of all simply connected, irreducible, compact symmetric spaces, there are finitely many possible $N$ with dimension not greater than $n$ and Einstein constant $1/2$.

We fix a space $N^k \times \R^{n-k}\in \mathcal M(n,\kappa)$ and consider all its possible quotient $(N^k \times \R^{n-k})/\Gamma$. For any $\sigma \in \Gamma$, we can write $\sigma=(\sigma_1,\sigma_2) \in \text{Iso}(N^k) \times \text{Iso}(\R^{n-k})$ from the uniqueness of de Rham's decomposition theorem. If we denote the projections of $\Gamma$ to $\text{Iso}(N^k)$ and $\text{Iso}(\R^{n-k})$ by $\Gamma_0$ and $\Gamma_1$, respectively, then both $\Gamma_0$ and $\Gamma_1$ are finite groups. From the center of mass construction, there exists a point $x \in \R^{n-k}$ fixed by $\Gamma_1$. Therefore, by a translation, we can regard
\begin{align*} 
\Gamma \subset \text{Iso}(N^k) \times O(n-k).
\end{align*}
Since $\text{Iso}(N^k) \times O(n-k)$ is a compact group, there are only finitely many conjugacy classes of any finite subgroup with fixed order; see, for example, \cite[Corollary 8.1.7]{BK81}. From \eqref{E202a}, it is easy to see that there are finitely many quotients $(N^k \times \R^{n-k})/\Gamma$.
\end{proof}

For any $(N^k \times \R^{n-k})/\Gamma \in \mathcal M(n,\kappa)$, we assume $n-k \ge 1$ and analyze its fibration structure. As above, we denote the projection of $\Gamma$ to $\text{Iso}(\R^{n-k})$ by $\Gamma_1$. Therefore, one can regard $(N^k \times \R^{n-k})/\Gamma$ as a singular fibration over the orbifold $\R^{n-k}/\Gamma_1$ such that each fiber is a quotient of $N$. More precisely, the fibration is induced by the natural projection map
\begin{align*} 
(N^k \times \R^{n-k})/\Gamma \overset{\pi}{\longrightarrow} \R^{n-k}/\Gamma_1.
\end{align*}
For any connected open set $U \subset \R^{n-k}/\Gamma_1$, there exist a connected open set $\tilde U \subset \R^{n-k}$, which projects onto $U$, and a subgroup $G \subset \Gamma_1$ such that $\tilde U$ is invariant under the action of $G$. Moreover, $U$ is isometric to $\tilde U/G$. It is clear that the following diagram is commutative:
\begin{center}
\begin{tikzcd}
N \times \tilde U \arrow[r] \arrow[d, "p_2"] & \pi^{-1}(U) \arrow[d, "\pi"] \\
\tilde U \arrow[r,"\text{pr}"] & U 
\end{tikzcd}
\end{center}
We define $\Lambda$ to be the preimage of $G$ for the projection $\Gamma \to \Gamma_1$. Then the projection map $p_2$ in the above diagram is $(\Lambda,G)$-equivariant. Moreover, it is clear that $(N \times \tilde U)/\Lambda$ is isometric to $\pi^{-1}(U)$. Notice that for any $x \in (N^k \times \R^{n-k})/\Gamma$, there is a natural decomposition of the tangent space at $x$ into horizontal and vertical directions from the fibration structure.

For any $p \in \R^{n-k}/\Gamma_1$, the fiber $\pi^{-1}(p)$ is a nontrivial quotient of $N$ if $p$ is a singular point. Moreover, there exists a small constant $r>0$ such that we can choose $U=B_d(p,r)$ and $\tilde U=B_{g_E}( \tilde p, r)$, where $\tilde p$ is a lift of $p$ and $d$ is the induced distance in $\R^{n-k}/\Gamma_1$. In this case, the group $G=G_{\tilde p}$ is the isotropy group of $\tilde p$ in $\Gamma_1$. We call the open set $B_{g_E}(\tilde p, r)/G_{\tilde p}$ an orbifold chart and the open set $(N \times B_{g_E}(\tilde p, r))/\Lambda$ a fundamental chart.

For later applications, we construct a family of fundamental charts for any space in $\mathcal M(n,\kappa)$.

\begin{lem} \label{lem:chart}
For any $n$ and $\kappa>0$, there exists a constant $\delta_0=\delta_0(n,\kappa) \in (0,1)$ satisfying the following property.

For any $\bar r>0$ and $(N^k \times \R^{n-k})/\Gamma \in \mathcal M(n,\kappa)$ with $n-k \ge 1$, there are countably many fundamental charts $(N \times B_{g_E}(\tilde p_i, 100 r_i))/\Lambda_i$ such that $r_i \in [\delta_0 \bar r , \bar r]$ and
\begin{align*} 
(N^k \times \R^{n-k})/\Gamma =\bigcup_{i \ge 1} (N \times B_{g_E}(\tilde p_i,r_i))/\Lambda_i.
\end{align*}
\end{lem}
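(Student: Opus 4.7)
My plan is as follows. The fundamental-chart condition on $(N\times B_{g_E}(\tilde q,100r))/\Lambda$ is equivalent to
\[
\rho(\tilde q)\ :=\ \inf_{g\in\Gamma_1\setminus G_{\tilde q}}|g\tilde q-\tilde q|\ \geq\ 200r,
\]
so it suffices to produce, for every $\tilde p\in\R^{n-k}$, a pair $(\tilde q^*(\tilde p),r^*(\tilde p))$ with $r^*\in[\delta_0\bar r,\bar r]$, $\rho(\tilde q^*)\geq 200r^*$, and $\tilde p\in B_{g_E}(\tilde q^*,r^*)$. The family $\{B_{g_E}(\tilde q^*(\tilde p),r^*(\tilde p))\}_{\tilde p}$ is then an open cover of $\R^{n-k}$; by second countability I extract a countable subcover, and its associated fundamental charts automatically cover $(N^k\times\R^{n-k})/\Gamma$. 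As preliminary reductions I rescale $\R^{n-k}$ to normalize $\bar r=1$ (harmless, since the $\Gamma_1$-action commutes with scalings) and, via the centroid argument already used in the proof of Lemma \ref{lem:finite}, I assume $\Gamma_1\subset O(n-k)$. Write $K:=C(n,\kappa)$ for the order bound \eqref{E202a} and set $\delta_0:=(400K)^{-K}$.

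The pair $(\tilde q^*,r^*)$ will be built by an isotropy-increasing iteration. Put $\tilde q_0:=\tilde p$ and $r_0:=\delta_0$. At stage $i$, if $\rho(\tilde q_i)\geq 200r_i$ stop; otherwise choose $g_i\in\Gamma_1\setminus G_{\tilde q_i}$ with $|g_i\tilde q_i-\tilde q_i|<200r_i$, set $H':=\langle G_{\tilde q_i},g_i\rangle$, and define
\[
\tilde q_{i+1}\ :=\ \frac{1}{|H'|}\sum_{h\in H'}h\,\tilde q_i,\qquad r_{i+1}\ :=\ 400K\cdot r_i.
\]
Since $\tilde q_{i+1}$ is fixed by $H'$, the new isotropy $G_{\tilde q_{i+1}}$ strictly contains $G_{\tilde q_i}$; as $|G_{\tilde q_i}|\leq K$, the iteration terminates in at most $K$ stages, so $r^*\leq(400K)^K\delta_0=1$.

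The crucial quantitative point is the step-displacement estimate. Every $h\in H'$ admits a word of length at most $|H'|\leq K$ in the symmetric generating set $G_{\tilde q_i}\cup\{g_i,g_i^{-1}\}$. Since elements of $G_{\tilde q_i}$ fix $\tilde q_i$ while $|g_i^{\pm1}\tilde q_i-\tilde q_i|<200r_i$, telescoping along the word (using that each factor is a Euclidean isometry) yields $|h\tilde q_i-\tilde q_i|<200Kr_i$ for every $h\in H'$, and hence $|\tilde q_{i+1}-\tilde q_i|\leq\max_{h\in H'}|h\tilde q_i-\tilde q_i|<200Kr_i$. Summing the geometric series gives
\[
|\tilde q^*-\tilde p|\ <\ 200K\sum_{i\geq 0}(400K)^i\delta_0\ \leq\ \frac{200K}{400K-1}\,r^*\ <\ \tfrac12 r^*,
\]
which guarantees $\tilde p\in B_{g_E}(\tilde q^*,r^*)$, as required.

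The main obstacle is precisely this balance between step-displacement and radius growth: a na\"ive choice such as $r_{i+1}=2r_i$ would allow the accumulated drift $\sum 200Kr_i$ to dominate $r^*$ and destroy the containment $\tilde p\in B_{g_E}(\tilde q^*,r^*)$. The growth factor $400K$ is forced by the requirement that the geometric series be dominated by its last term with a safety factor $<1/2$, and this in turn explains the exponential dependence $\delta_0\sim(400K)^{-K}$; the dependence of $\delta_0$ on $\kappa$ enters only through the order bound \eqref{E202a} on $|\Gamma|$.
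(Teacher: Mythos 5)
Your argument is correct, but it proves the claim by a genuinely different route than the paper. The paper argues by contradiction and compactness: after reducing to one of the finitely many spaces via Lemma \ref{lem:finite} and normalizing $\bar r=1$, it assumes a sequence of bad points with $\delta_i\to 0$, shows they escape to infinity while some fixed $\gamma\in\Gamma_1$ moves them by an amount tending to $0$, and then, by an iterated limiting argument along successive orthogonal directions (at most $n-k$ times), forces $\gamma=\mathrm{id}$, a contradiction; this yields $\delta_0$ non-effectively. You instead give a direct, quantitative construction: reformulate the fundamental-chart condition as the displacement bound $\rho(\tilde q)\ge 200r$, and run an isotropy-increasing centroid iteration whose termination is controlled purely by the order bound \eqref{E202a}, so that $\delta_0=(400K)^{-K}$ is explicit and Lemma \ref{lem:finite} is not needed at all (only the bound on $|\Gamma_1|$). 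The key steps -- the centroid of an $H'$-orbit is fixed by $H'$, the word-length/telescoping estimate $|h\tilde q_i-\tilde q_i|<200Kr_i$, and domination of the accumulated drift by the final radius -- are all sound. One small arithmetic slip: $\tfrac{200K}{400K-1}$ is slightly \emph{larger} than $\tfrac12$, so the final bound should read, say, $|\tilde q^*-\tilde p|<\tfrac{200K}{400K-1}r^*<\tfrac23 r^*$; since all that is needed is $|\tilde q^*-\tilde p|<r^*$ to get $\tilde p\in B_{g_E}(\tilde q^*,r^*)$, this does not affect the proof. In exchange for a bit more combinatorial bookkeeping, your approach buys an effective constant $\delta_0(n,\kappa)$ and independence from the classification underlying Lemma \ref{lem:finite}, while the paper's soft argument is shorter once that finiteness is in hand.
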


\begin{proof}
From Lemma \ref{lem:finite}, we only need to prove the lemma for a fixed space $(N^k \times \R^{n-k})/\Gamma \in \mathcal M(n,\kappa)$. As before, we set $\Gamma_1$ to be the projection of $\Gamma$ to $\text{Iso}(\R^L)$, where $L:=n-k$. Since $\Gamma_1$ is finite, we may assume $\Gamma_1 \subset O(L)$ by a translation. Therefore, the orbifold $\R^L/\Gamma_1$ can be regarded as a cone over the compact orbifold $S^{L-1}/\Gamma_1$ and hence we may assume $\bar r=1$ by rescaling.

\textbf{Claim}: If $\delta$ is sufficiently small, any point $x \in \R^L$ is contained in a ball $B_{g_E}(\tilde p,r)$ with $r \in [\delta , 1]$ such that $B_{g_E}(\tilde p, 100 r)/G_{\tilde p}$ is an orbifold chart.

Suppose the claim does not hold. Then there exists a sequence $x_i \in \R^L$ such that $x_i$ is not contained in any open set in the form of $B_{g_E}(\tilde p, r)$ with $r \in [\delta_i,1]$, where $B_{g_E}(\tilde p, 100 r)/G_{\tilde p}$ is an orbifold chart and $\delta_i \to 0$.

Notice that $x_i \to \infty$. Otherwise, we have $\lim_{i \to \infty} x_i=x$ by taking a subsequence and $B_{g_E}(x, r_0)/G_{x}$ is an orbifold chart for a small constant $r_0>0$, which contradicts our assumption. Moreover, there exists $\gamma_i \in \Gamma_1$ such that 
\begin{align} 
\gamma_i(x_i) \ne x_i \quad \text{and}\quad \lim_{i\to \infty}|\gamma_i (x_i)-x_i|=0 \label{EX100a}
\end{align}
since otherwise, by taking a subsequence, $B_{g_E}(x_i, r_1)/G_{x_i}$ is an orbifold chart for a small constant $r_1>0$. Since $\Gamma_1$ is finite, we assume all $\gamma_i$ are the same and denote it by $\gamma$. By taking a subsequence if necessary, we assume
\begin{align*} 
\lim_{i \to \infty} \frac{x_i}{|x_i|}=y \in S^{L-1}
\end{align*}
and hence by \eqref{EX100a} and the fact that $x_i \to \infty$,
\begin{align*} 
\gamma(y)=y. 
\end{align*}
There exists a number $s_0>0$ such that $B_{g_E}(y, s_0)/G_{y}$ is an orbifold chart. If we set $ y_i=|x_i| y$, then it is clear that $y_i$ is also fixed by $\gamma$ and $G_{y_i}=G_y$. Therefore, $B_{g_E}(y_i, ts_0)/G_{y_i}$ is an orbifold chart for any $1 \le t \le |x_i|$. By our assumption, we conclude that
\begin{align*} 
\lim_{i \to \infty} |x_i-y_i|=\infty \quad \text{and} \quad \lim_{i \to \infty} \frac{|x_i-y_i|}{|x_i|}=0. 
\end{align*}
Next, by taking a subsequence, we assume
\begin{align*} 
\lim_{i \to \infty} \frac{x_i-y_i}{|x_i-y_i|}=z \in S^{L-1}.
\end{align*}
Then it is clear that $z$ is perpendicular to $y$ and $\gamma(z)=z$. If we set $z_i=y_i+|x_i-y_i|z$, then we can repeat the above argument, with $0,y_i,\Gamma_1$ replaced with $y_i,z_i,G_y$, respectively. Notice that even though the $y_i$ are different based points, $G_{y_i}=G_y$ are the same. By iterating for at most $L$ times, one can conclude that $\gamma$ is the identity map on $\R^L$, which is a contradiction.

Now the conclusion follows immediately from the Claim and a covering argument.
\end{proof}
\begin{rem} \label{rem:dia}
For any $(N^k \times \R^{n-k})/\Gamma \in \mathcal M(n,\kappa)$, it follows from the Bonnet-Myers theorem that the diameter of the fiber is at most $\sqrt{2(n-1)} \pi$. Therefore, in practice, we will always assume that $\delta_0 \bar r \ge n^{10}$ so that the base of a fundamental chart has a much larger size than the fiber.
\end{rem}

Now, we recall the distance comparison and existence of the asymptotic Ricci shrinker for a Type-I $\kappa$-solution.

\begin{lem} \label{lem:asym}
Let $(M^n,g(t))_{t \in (-\infty,0]}$ be a nonflat, Type-I, $\kappa$-solution satisfying \eqref{E200}. Then we have the following.
\begin{enumerate}[label=(\roman*)]
\item For any $t_2<t_1 < 0$ and $p,q \in M$, we have
\begin{align*} 
d_{g(t_2)}(p,q)-8(n-1)C_0(\sqrt{|t_2|}-\sqrt{|t_1|})\le d_{g(t_1)}(p,q) \le d_{g(t_2)}(p,q).
\end{align*}
\item For any $p \in M$ and $\tau_i \to \infty$, if we set $g_i(t)=\tau_i^{-1}g(\tau_it)$, then the sequence of Ricci flows $(M,g_i(t),p)_{t \in (-\infty,0)}$ subconverges smoothly to a Ricci flow associated with a Ricci shrinker in $\mathcal M(n,\kappa)$.
\end{enumerate}
\end{lem}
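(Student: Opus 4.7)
The plan is to handle (i) and (ii) in turn, each by a standard Ricci flow argument specialized to the Type-I regime.

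For (i), the upper bound is immediate from weakly PIC$_2$: it implies $\text{Ric} \ge 0$, so $\partial_t g = -2\text{Ric} \le 0$, the metric is monotonically nonincreasing in $t$, and hence $d_{g(t_1)}(p,q) \le d_{g(t_2)}(p,q)$ for $t_2 < t_1 < 0$. For the lower bound I would invoke Hamilton's distance-distortion estimate: if $\text{Ric} \le (n-1)K$ on $B_t(p, r_0) \cup B_t(q, r_0)$ and $d_t(p,q) \ge 2 r_0$, then
\[
\frac{d^+}{dt} d_t(p,q) \ge -2(n-1)\left(\tfrac{2}{3}K r_0 + r_0^{-1}\right).
\]
Applying this with $K$ of the size $C_0/|t|$ permitted by \eqref{E200} and $r_0$ of the order $\sqrt{|t|/C_0}$, both $Kr_0$ and $r_0^{-1}$ are comparable to $\sqrt{C_0/|t|}$. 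Integrating in $t$ and using $\int_{t_2}^{t_1} |t|^{-1/2}\,dt = 2(\sqrt{|t_2|} - \sqrt{|t_1|})$ produces the desired linear dependence on $\sqrt{|t_2|} - \sqrt{|t_1|}$; the dimensional coefficient is routine book-keeping. The case $d_t(p,q) < 2 r_0$ at some intermediate time is harmless, since the claimed estimate then holds trivially after absorbing into constants.

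For (ii), observe that the rescalings $g_i(t) = \tau_i^{-1} g(\tau_i t)$ preserve every structural hypothesis: the Type-I bound $|\text{Rm}_{g_i}|(x,t) \le C_0/|t|$ is scale-invariant, as are $\kappa$-noncollapsing and weakly PIC$_2$. Hamilton's compactness theorem, with curvature control coming from Type-I and injectivity radius control from $\kappa$-noncollapsing, yields smooth pointed Cheeger-Gromov-Hamilton subsequential convergence on $(-\infty, 0)$ to a limit ancient flow $(M_\infty, g_\infty(t), p_\infty)$. To upgrade this limit to a self-similar gradient shrinking Ricci soliton, I would employ Perelman's reduced distance and reduced volume based at $(p, 0)$, in the spirit of Naber's argument for Type-I ancient solutions: there exist $\ell$-centers $q_{\tau_i}$ with $\ell(q_{\tau_i}, \tau_i) \le n/2$, and the monotonicity of the reduced volume, combined with the equality case analysis at the limit, forces $g_\infty(t)$ to evolve by a shrinker.

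The limit shrinker inherits $\kappa$-noncollapsing and weakly PIC$_2$, and Lemma \ref{lem:shrinker} then classifies it as a finite quotient of $N^k \times \R^{n-k}$ with $N$ simply-connected, compact, symmetric. The one point requiring care is nonflatness of the limit: since $(M,g(t))$ is nonflat and of Type-I, Hamilton's trace Harnack inequality applied to the scalar curvature together with the uniform upper bound $R \le C_0/|t|$ shows that $\sup R$ of the rescaled flows $g_i$ is bounded below by a positive constant at $t = -1$, so $g_\infty$ is not identically flat. Hence the limit belongs to $\mathcal M(n,\kappa)$, completing (ii). The main obstacle in this scheme is precisely the identification of the limit with a shrinker rather than merely with an ancient solution; once that is in hand, the classification is a direct invocation of Lemma \ref{lem:shrinker}.
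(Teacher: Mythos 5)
Your treatment of part (i) and of the existence and identification of the shrinker limit in part (ii) is essentially the paper's own route: (i) is Perelman's distance-distortion estimate \cite[Lemma 8.3]{Pe1} combined with the Type-I bound (your choice of $K\sim C_0/|t|$, $r_0\sim\sqrt{|t|}$ up to normalization, and the integration in $t$ are exactly the bookkeeping that produces the stated constant, and the $d_t(p,q)<2r_0$ case is indeed routine), while the compactness-plus-reduced-distance argument you describe for (ii) is precisely Naber's theorem \cite[Theorem 3.1]{Na10}, after which Lemma \ref{lem:shrinker} applies as you say.

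The genuine gap is your nonflatness argument for the limit. The trace Harnack inequality (Brendle's version, valid here since the flow has weakly PIC$_2$ and bounded curvature) gives $\partial_t R\ge 0$, i.e.\ it propagates lower bounds on $R$ \emph{forward} in time; at the rescaled time $t=-1$ it only yields $\tau_i R(x,-\tau_i)\le \tau_i R(x,-1)$, an inequality in the wrong direction, and it gives nothing like $\sup_M R(\cdot,-\tau_i)\ge c/\tau_i$. A priori $\sup_M R(\cdot,t)$ could decay like $|t|^{-2}$ as $t\to-\infty$, in which case every blow-down limit would be flat even though $g(t)$ is nonflat; ruling this out is exactly the nontrivial point, and it does not follow from the Harnack inequality together with the Type-I upper bound. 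The paper disposes of it by citing \cite[Theorem 4.1]{CZ11}, which asserts that a nonflat Type-I ancient solution has nonflat asymptotic shrinkers. Alternatively, you can close the gap with the reduced-volume machinery you already invoke: $\kappa$-noncollapsing forces a flat limit shrinker to be the Gaussian soliton on $\R^n$, so the asymptotic reduced volume based at $(p,0)$ would equal $1$, and Perelman's monotonicity together with its rigidity case would force $(M,g(t))$ itself to be flat, a contradiction. With your Harnack sentence replaced by either of these arguments, the proof is complete and coincides with the paper's.
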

\begin{proof}
Assertion (i) follows directly from the Type-I assumption and \cite[Lemma $8.3$]{Pe1}. For assertion (ii), the existence of the asymptotic Ricci shrinker follows from \cite[Theorem $3.1$]{Na10}. Notice that the limit must be nonflat by \cite[Theorem $4.1$]{CZ11} and hence it belongs to $\mathcal M(n,\kappa)$ by Lemma \ref{lem:shrinker}.
\end{proof}

Now, we classify all compact, Type-I, $\kappa$-solutions.

\begin{thm} \label{thm:cpt}
Let $(M^n,g(t))_{t \in (-\infty,0]}$ be a compact, Type-I, $\kappa$-solution. Then $(M^n,g(t))$ is isometric to a finite quotient of a compact symmetric space $N^n$.
\end{thm}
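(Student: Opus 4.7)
The plan is to follow a blowdown-and-rigidity strategy: extract an asymptotic Ricci shrinker using Lemma~\ref{lem:asym}(ii), argue it is compact via the Type-I distance comparison together with compactness of $M$, and then invoke Perelman's entropy rigidity on the compact manifold $M$ to conclude that $(M,g(t))$ itself is a gradient shrinker, so that Lemma~\ref{lem:shrinker} produces the stated classification. Concretely, fix $p\in M$ and a sequence $\tau_i\to\infty$; by Lemma~\ref{lem:asym}(ii) the parabolic rescalings $g_i(t):=\tau_i^{-1}g(\tau_i t)$ subconverge smoothly, in the Cheeger-Gromov sense, to the Ricci flow of an asymptotic shrinker $(\bar M,\bar g)=(N^k\times\R^{n-k})/\Gamma\in\mathcal M(n,\kappa)$.

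The next step is to verify $k=n$. Since $M$ is compact, $D_0:=\mathrm{diam}(M,g(-1))<\infty$. Applying Lemma~\ref{lem:asym}(i) with $t_1=-1$, $t_2=-\tau_i$ gives $\mathrm{diam}(M,g(-\tau_i))\le D_0+8(n-1)C_0(\sqrt{\tau_i}-1)$, so
\[
\mathrm{diam}(M,g_i(-1))=\tau_i^{-1/2}\,\mathrm{diam}(M,g(-\tau_i))\le \tau_i^{-1/2}D_0+8(n-1)C_0
\]
is uniformly bounded. Hence the limit $\bar M$ has finite diameter, which forces $k=n$, and so $\bar M=N/\Gamma$ for a simply-connected compact symmetric space $N$. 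In particular, for $i$ large enough, $M$ is diffeomorphic to $N/\Gamma$.

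To upgrade this Cheeger-Gromov identification to an isometric one at each time $t$, I would use the monotonicity of Perelman's $\mathcal W$-entropy (equivalently, the reduced volume) on the compact manifold $M$, following Ni~\cite{N09}. The scale-invariant quantity $\mu(g(t),-t)$ is monotone under the flow; by the smooth convergence established above together with the continuity of $\mu$ on compact manifolds, its limit as $t\to-\infty$ equals the shrinker entropy of $N/\Gamma$. A standard rigidity argument then forces $\mu(g(t),-t)$ to be constant in $t$, so by the equality case of Perelman's monotonicity $(M,g(t))$ is a gradient shrinking Ricci soliton. Lemma~\ref{lem:shrinker} applied with $k=n$ identifies it isometrically with a finite quotient of $N$.

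The main obstacle is this final rigidity step, i.e.\ passing from the Cheeger-Gromov asymptotic identification to an exact equality of entropy values so that Perelman's monotonicity can be saturated. Compactness of $M$, preserved under rescaling by the previous step, is indispensable here, since it makes the variational problem defining $\mathcal W$ continuous under smooth convergence and thereby admits clean rigidity. In the noncompact setting of Theorem~\ref{thm:main2} this continuity fails and the argument instead requires the moduli-space and fibration machinery developed in the rest of the section.
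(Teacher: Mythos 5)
Your opening step is the same as the paper's: blow down via Lemma \ref{lem:asym}(ii), use the distance comparison of Lemma \ref{lem:asym}(i) to bound $\mathrm{diam}(M,g_i(-1))$, conclude the asymptotic shrinker is compact and $M$ is diffeomorphic to it (the paper uses this to deduce that the universal cover of $M$ is compact, since $\pi_1$ of the limit is finite by \eqref{E202a}). The second half of your argument, however, has a genuine gap, and in fact it aims at an intermediate statement that is false. Monotonicity of $\mu(g(t),-t)$ together with the identification of its backward limit only gives $\mu(g(t),-t)\ge \mu_{N/\Gamma}$ for finite $t$; nothing you wrote produces the matching upper bound needed to force constancy, so the appeal to the equality case of Perelman's monotonicity is unsupported — and you acknowledge this "final rigidity step" is the obstacle without filling it. Worse, it cannot be filled as stated: take two irreducible compact symmetric spaces $N_1,N_2$ with Einstein constant $1/2$ and run the flow from $a_1 g_{N_1}+a_2 g_{N_2}$ with $a_1\neq a_2$. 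This is a compact, Type-I, $\kappa$-noncollapsed ancient solution with weakly PIC$_2$ whose asymptotic shrinker is the equally normalized product, yet it is not a gradient shrinking soliton for any $\tau$ (on a compact product the soliton equation forces $a_1=a_2$). The paper makes exactly this point after Theorem \ref{thm:main2}: for $\kappa$-solutions the symmetric factors may be rescaled independently, whereas for shrinkers the scalings are locked. So the plan "show $(M,g(t))$ is a shrinker, then apply Lemma \ref{lem:shrinker}" cannot work in the reducible case; and even in the irreducible Einstein case the soliton structure is relative to a singular time $T>0$ rather than $0$, so $\mu(g(t),-t)$ is not the correctly matched scale-invariant quantity.

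The paper's actual proof avoids any claim that the solution is self-similar. After the compactness step it passes to the (compact) universal cover, reduces to the irreducible case, and applies Berger's holonomy classification: if $\mathrm{Hol}=\mathrm{SO}(n)$, the strong maximum principle of Brendle--Schoen \cite{BS09} upgrades weakly PIC$_2$ to strictly PIC$_2$ and Ni's theorem \cite{N09} gives the round sphere; if $\mathrm{Hol}=\mathrm{U}(n/2)$, the K\"ahler result of Deng--Zhu \cite{DZ20b} applies; the remaining special holonomies force the metric to be Einstein, and Brendle's rigidity theorem \cite{Bre10a} gives a symmetric space. If you want to rescue an entropy-style argument, you would at minimum have to first reduce to the irreducible case and work with $\mu(g(t),T-t)$ for the correct singular time, and even then the constancy of the entropy requires a proof, not merely continuity of $\mu$ under smooth convergence.
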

\begin{proof}
We first prove that the universal cover, denoted by $(\tilde M,\tilde g(t))$, of $(M,g(t))$ is also compact. Indeed, for a fixed point $p$ and any sequence $\tau_i \to \infty$, it follows from Lemma \ref{lem:asym} (ii) that $(M,\tau_i^{-1}g(-\tau_i),p)$ subconverges smoothly to a Ricci shrinker $(M_\infty,g_\infty,p_\infty)\in \mathcal M(n,\kappa)$. Moreover, it follows from the distance estimate in Lemma \ref{lem:asym} (i) that $M_\infty$ is compact and hence is diffeomorphic to $M$. Therefore, the universal cover of $M_\infty$ is compact since the fundamental group of $M_\infty$ is finite by \eqref{E202a}.

We may assume the compact ancient solution $(\tilde M,\tilde g(t))$ is irreducible since otherwise, we can prove the conclusion for each factor. Therefore, we have the following possible cases by Berger's holonomy classification.
\begin{enumerate}[label=(\alph*)]
\item $(\tilde M,\tilde g(t))$ is a compact symmetric space. In this case, there is nothing to prove.

\item $\text{Hol}(\tilde M,\tilde g(t))=\text{SO}(n)$. In this case, one can prove that $(\tilde M,\tilde g(t))$ has strictly PIC$_2$ by using the strong maximum principle developed in \cite{BS09}. For details, refer to \cite[Proposition $6.6$, Case 1]{Bre19}. Therefore, $(\tilde M,\tilde g(t))$ is isometric to the standard sphere $S^n$, see \cite[Corollary $0.4$]{N09}.

\item $\text{Hol}(\tilde M,\tilde g(t))=\text{U}(n/2)$, where $n$ is even. In this case, $(\tilde M,\tilde g(t))$ is a $\kappa$-noncollapsed ancient solution to the K\"ahler Ricci flow. From the weakly PIC$_2$ condition, we derive, in particular, that $(\tilde M,\tilde g(t))$ has nonnegative bisectional curvature. Therefore, $(\tilde M,\tilde g(t))$ is isometric to a compact Hermitian symmetric space from \cite[Theorem $1.1$]{DZ20b}.

\item $\text{Hol}(\tilde M,\tilde g(t)) \ne \text{SO}(n)$ or $\text{U}(n/2)$. In this case, $(\tilde M,\tilde g(t))$ is an Einstein manifold \cite[Chapter $10$]{Be08}. Therefore, $(\tilde M,\tilde g(t))$ is a symmetric space by \cite[Theorem $1$]{Bre10a}.
\end{enumerate}
In sum, $(\tilde M,\tilde g(t))$ is a compact symmetric space, and the proof is complete.
\end{proof}

Now, we turn our attention to noncompact $\kappa$-solutions. First, we prove the following result, which indicates that we only need to consider the simply connected case.

\begin{lem} \label{lem:quo}
Let $(M^n,g(t))$ be a $\kappa$-solution whose universal cover is $N^k \times \R^{n-k}$, where $N$ is a simply connected compact symmetric space. Then the fundamental group of $M$ is finite and
\begin{align*} 
|\pi_1(M) |\le C(n,\kappa),
\end{align*}
where $C(n,\kappa)$ is the same constant as in Lemma \ref{lem:fund1}.
\end{lem}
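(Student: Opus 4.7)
The plan is to reduce Lemma~\ref{lem:quo} to Lemma~\ref{lem:fund} by showing that $(M, g)$ is itself a $\kappa$-noncollapsed Ricci shrinker. First, the universal cover $(\tilde M, \tilde g(t))$ is the Ricci flow associated with the shrinker $(N^k \times \R^{n-k}, g_N + g_E)$, whose potential function has the form $\tilde f(x, z) = \frac{1}{4}|z - z_0|^2 + \frac{k}{2}$ for any choice of $z_0 \in \R^{n-k}$. The deck group $\Gamma := \pi_1(M)$ acts freely and isometrically on $\tilde M$, and by de Rham's theorem (as in the proof of Lemma~\ref{lem:finite}) each $\gamma \in \Gamma$ splits as $(\gamma_0, \gamma_1) \in \text{Iso}(N^k) \times \text{Iso}(\R^{n-k})$; I let $\Gamma_1$ denote the projection to the $\R^{n-k}$-factor. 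Since $\text{Iso}(N^k)$ is compact, the kernel of $\Gamma \to \Gamma_1$ is a discrete subgroup of a compact Lie group, hence finite.

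The key step is to show that $\Gamma$ itself is finite. If not, then $\Gamma_1$ is infinite; since any discrete subgroup of the compact linear part $O(n-k)$ of $\text{Iso}(\R^{n-k})$ must be finite, the orbit-stabilizer theorem applied to the isotropy of $0 \in \R^{n-k}$ forces $\Gamma_1 \cdot 0$ to be unbounded. I would fix $\tilde p = (x_0, 0)$ and the Dirichlet fundamental domain $\mathcal F$ at $\tilde p$, and exploit the covering identity $\text{vol}(B_{g(t)}(p, r)) = \text{vol}(\mathcal F \cap B_{\tilde g(t)}(\tilde p, r))$. At a very negative time $t = -\tau$ the metric is $\tilde g(-\tau) = \tau g_N + g_E$ with $|\text{Rm}| \sim 1/\tau$, so the scale $r = \sqrt{\tau}/10$ satisfies the curvature constraint $|\text{Rm}| \le r^{-2}$; the increasingly many $\gamma \in \Gamma$ with $\gamma \tilde p \in B_{\tilde g(-\tau)}(\tilde p, 2r)$ clip $\mathcal F$ by their perpendicular bisectors so that $\text{vol}(\mathcal F \cap B_{\tilde g(-\tau)}(\tilde p, r)) = o(r^n)$ as $\tau \to \infty$, contradicting the $\kappa$-noncollapsing of $(M, g)$.

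Once $\Gamma$ is finite, I apply the center-of-mass construction from the proof of Lemma~\ref{lem:finite} to obtain $z_0 \in \R^{n-k}$ fixed by every $\gamma_1 \in \Gamma_1$. With this $z_0$, the potential $\tilde f$ is $\Gamma$-invariant and descends to a smooth function $f$ on $M$ satisfying both $\text{Ric}(g) + \text{Hess}(f) = \frac{1}{2} g$ and the normalization $R + |\nabla f|^2 = f$. Hence $(M, g, f)$ is a Ricci shrinker, and because it is $\kappa$-noncollapsed by hypothesis, Lemma~\ref{lem:fund} immediately yields $|\pi_1(M)| \le C(n, \kappa)$ with the same constant as in that lemma. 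The hard part will be the volume estimate in the previous paragraph: turning the unbounded discrete orbit $\Gamma_1 \cdot 0$ into a quantitative $o(r^n)$ decay requires a Dirichlet-domain computation, and in the extremal case where $\Gamma_1$ has a finite-index free-abelian translation subgroup (the typical output of a Bieberbach-type analysis) $\mathcal F$ becomes a slab of bounded thickness whose intersection with $B_{\tilde g(-\tau)}(\tilde p, r)$ has volume only $O(r^{n-1})$.
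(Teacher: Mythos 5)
Your overall strategy (rule out infinite $\pi_1$ by a volume/noncollapsing argument as $t\to-\infty$, then invoke Lemma~\ref{lem:fund}) is the same as the paper's, but two steps do not work as written. First, the final reduction: you cannot conclude that $(M,g,f)$ is itself a Ricci shrinker. The hypothesis only says the universal cover of the $\kappa$-solution is isometric to $N^k\times\R^{n-k}$; the irreducible factors of $N$ may sit at arbitrary relative scales (the paper emphasizes exactly this after Theorem~\ref{thm:main2}: for $\kappa$-solutions each factor can be rescaled, for shrinkers it cannot), so $g_N$ need not be Einstein with constant $\tfrac12$ at the given time, and $\mathrm{Ric}+\mathrm{Hess}\,f=\tfrac12 g$ generally fails for your descended $f$. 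The paper instead applies Lemma~\ref{lem:fund} to the asymptotic Ricci shrinker of $(M,g(t))$: once $\Gamma$ is known to be finite, the blow-down limit is $(N^k\times\R^{n-k})/\Gamma$ with the normalized metric and the same $\Gamma$, it lies in $\mathcal M(n,\kappa)$ because $\kappa$-noncollapsing passes to the limit, and this is how the constant $C(n,\kappa)$ of Lemma~\ref{lem:fund} is obtained. Your proof needs this detour (or an equivalent normalization argument) to close.

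Second, the collapsing estimate you defer as ``the hard part'' is sketched incorrectly. A deck transformation is $\gamma=(\gamma_0,\gamma_1)\in\text{Iso}(N^k)\times\text{Iso}(\R^{n-k})$, and at time $-\tau$ its displacement of $\tilde p=(x_0,0)$ is comparable to $\bigl(\tau\, d_{g_N}(x_0,\gamma_0 x_0)^2+|\gamma_1(0)|^2\bigr)^{1/2}$: the $N$-component is magnified by $\sqrt{\tau}\sim r$, so an element of the Bieberbach translation subgroup $\Gamma_1'$ need not yield a bisector at bounded distance from $\tilde p$, and the ``slab of bounded thickness'' picture fails unless the lift happens to move $x_0$ by $O(1/\sqrt{\tau})$; making the Dirichlet-domain count work would require an additional pigeonholing in the compact group $\text{Iso}(N^k)$. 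The paper avoids this entirely by working downstairs with the singular fibration $\pi:(N^k\times\R^{n-k})/\Gamma\to\R^{n-k}/\Gamma_1$: with $V$ the span of $\Gamma_1'$ and $l=\dim V\ge 1$, balls of radius $r$ in $\R^{n-k}/\Gamma_1$ have volume at most $Cr^{n-k-l}$, while each fiber at $t=-r^2$ has volume at most $Cr^{k}$, so $|B_{g(-r^2)}(x,r)|\le Cr^{n-l}$, contradicting $\kappa$-noncollapsing since the curvature there is of order $r^{-2}$. Either supply the pigeonhole step in your universal-cover computation or adopt the fibration estimate; as it stands the quantitative decay is not established.
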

\begin{proof} 
We assume the fundamental group of $M$ is infinite and derive a contradiction.

From our assumption, $(M^n,g(t))$ is isometric to $(N^k \times \R^{n-k})/\Gamma$, where $\Gamma \subset \text{Iso}(N^k \times \R^{n-k})$ is an infinite discrete subgroup which acts freely on $N^k \times \R^{n-k}$. Moreover, we may assume $L:=n-k \ge 1$, since otherwise, the universal cover is compact.

If we denote the projections of $\Gamma$ to $\text{Iso}(N)$ and $\text{Iso}(\R^L)$ by $\Gamma_0$ and $\Gamma_1$, respectively, then it is clear that $\Gamma_1$ is infinite, since otherwise $(N^k \times \R^{n-k})/\Gamma$ has smaller Hausdorff dimension than $n$. As before, we can regard $(N^k \times \R^{n-k})/\Gamma$ as a singular fibration over the orbifold $\R^L/\Gamma_1$ such that each fiber is a quotient of $N$.

Since $\Gamma_1$ is discrete, after a change of origin in $\R^L$, there exists a normal subgroup $\Gamma_1'$ consists of translating elements, such that the index is finite, see \cite[Theorem $3.2.8$]{WO67}. If we set $V$ to be the subspace generated by elements in $\Gamma_1'$, then $l:=\text{dim}\, V \ge 1$ since $\Gamma_1$ is infinite.

For Ricci flow $\lc((N^k \times \R^{L})/\Gamma,\bar g(t)\rc_{t \in (-\infty,0]}$, it is easy to see that as $t \to -\infty$, the metric on the $\R^L$ direction remains unchanged, but the metric on the fiber direction is rescaled by a factor $|t|$. In addition, the scalar curvature of $\lc((N^k \times \R^{L})/\Gamma,\bar g(t)\rc$ is identically $C|t|^{-1}$. On the other hand, it is clear from our analysis above that for any $x \in (N^k \times \R^{L})/\Gamma$,
\begin{align*} 
\frac{|B_{g(-r^2)}(x,r)|_{g(-r^2)}}{r^{n-l}} \le C
\end{align*} 
for any $r \ge 1$. However, it contradicts our $\kappa$-noncollapsing condition if $r \to \infty$. 

To summarize, we have shown that $\Gamma$ is finite. Now, it is easy to see that any asymptotic Ricci shrinker of $(M^n,g(t))$ is $(N^k \times \R^{n-k})/\Gamma \in \mathcal M(n,\kappa)$. The control of the order of $\Gamma$ follows from Lemma \ref{lem:fund1}.
\end{proof}

\begin{defn} \label{def:close}
Let $(M_1,g_1,x_1)$ and $(M_2,g_2,x_2)$ be two pointed Riemannian manifolds. We say $(M_1,g_1,x_1)$ is $\ep$-close to $(M_2,g_2,x_2)$ if $B_{g_1}(x_1,\ep^{-1})$ and $B_{g_2}(x_2,\ep^{-1})$ are $\ep$-close in $C^{[\ep^{-1}]}$-topology. More precisely, there exist open neighborhoods $U_i$ such that $B_{g_i}(x_i,\ep^{-1}-\ep) \subset U_i \subset \bar U_i \subset B_{g_i}(x_i,\ep^{-1})$ for $i=1,2$. Moreover, there exists a diffeomorphism $\varphi:U_1 \to U_2$ such that $\varphi(x_1)=x_2$ and
\begin{align*} 
\sup_{ U_1}\lc |\varphi^* g_2-g_1|^2+\sum_{i=1}^{[\ep^{-1}]} |\na^i_{g_1} (\varphi^*g_2)|^2\rc \le \ep^2.
\end{align*} 
\end{defn}

\begin{lem} \label{lem:close}
For any $n$ and $\kappa>0$, there exists a small constant $\ep_0=\ep_0(n,\kappa)>0$ satisfying the following property.

Suppose $(N_i^{k_i} \times \R^{n-k_i})/\Gamma_i \in \mathcal M(n,\kappa)$ for $i=1,2$ such that $\lc (N_1^{k_1} \times \R^{n-k_1})/\Gamma_1,x_1 \rc$ is $\ep$-close to $\lc (N_2^{k_2} \times \R^{n-k_2})/\Gamma_2,x_2 \rc$ with $\ep \le \ep_0$. Then $N_1$ is isometric to $N_2$.
\end{lem}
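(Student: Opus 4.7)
The plan is a proof by contradiction, using the finiteness of $\mathcal M(n,\kappa)$ from Lemma \ref{lem:finite} together with the rigidity of locally symmetric spaces. If the conclusion failed, we would obtain $\ep_i\downarrow 0$ and pointed spaces $(M_{1,i},x_{1,i}),(M_{2,i},x_{2,i})$ in $\mathcal M(n,\kappa)$, with $M_{j,i}=(N_{j,i}^{k_{j,i}}\times\R^{n-k_{j,i}})/\Gamma_{j,i}$, realizing the $\ep_i$-closeness while $N_{1,i}\not\simeq N_{2,i}$. By Lemma \ref{lem:finite}, after passing to a subsequence we may assume the two isometry classes are fixed: $M_1=(N_1^{k_1}\times\R^{n-k_1})/\Gamma_1$ and $M_2=(N_2^{k_2}\times\R^{n-k_2})/\Gamma_2$ with $N_1\not\simeq N_2$, and Definition \ref{def:close} supplies diffeomorphisms $\varphi_i:U_{1,i}\to U_{2,i}$ with $\varphi_i(x_{1,i})=x_{2,i}$ and $\varphi_i^*g_2\to g_1$ in $C^\infty_{\mathrm{loc}}$.

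The core step is to upgrade this smooth convergence to an exact isomorphism of curvature tensors at the base points. Because $\Gamma_j$ acts freely and by isometries on the universal cover, $M_j$ is everywhere locally isometric to $N_j\times\R^{n-k_j}$; hence in any $g_j$-orthonormal frame, the tensor $\mathrm{Rm}(g_j)$ at $x_{j,i}$ has a fixed expression depending only on $N_j$, up to the natural $O(n)$-action on frames. Since $d\varphi_i(x_{1,i})$ is an $O(\ep_i)$-approximate linear isometry, a standard frame-comparison argument produces $A_i\in O(n)$ such that $A_i^*\mathrm{Rm}(g_2)_{x_{2,i}}-\mathrm{Rm}(g_1)_{x_{1,i}}=O(\ep_i)$; extracting a subsequential limit $A_i\to A\in O(n)$ then yields $A^*\mathrm{Rm}(g_2)_{x_2}=\mathrm{Rm}(g_1)_{x_1}$ exactly. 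Since each $N_j$ is Einstein with positive constant $1/2$ and hence has no flat de~Rham factor, the null space of $\mathrm{Rm}(g_j)$ at a point is canonically the $\R^{n-k_j}$-direction. It follows that $A$ forces $k_1=k_2$, preserves the canonical orthogonal splitting $T_xM_j=T_{y_j}N_j\oplus\R^{n-k_j}$, and restricts to a linear isometry $A_0:T_{y_1}N_1\to T_{y_2}N_2$ intertwining the curvature tensors of $N_1$ and $N_2$.

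Finally, since $N_1$ and $N_2$ are simply-connected, complete, locally symmetric (so $\nabla\mathrm{Rm}\equiv 0$), Cartan's local-to-global rigidity theorem for locally symmetric spaces then implies that $A_0$ extends to a global isometry $N_1\to N_2$, contradicting $N_1\not\simeq N_2$. The principal technical difficulty is the middle step: producing the limiting element $A\in O(n)$ in a frame-independent manner and verifying that it respects the canonical Euclidean/curved splitting of each tangent space. Once that is established, Cartan's rigidity for symmetric spaces closes the argument immediately.
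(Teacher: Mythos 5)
Your proof is correct, but it follows a genuinely different route from the paper's. The paper argues chart-by-chart: using Lemma \ref{lem:chart} it places $x_i$ in a fundamental chart, lifts the almost-isometry $\varphi$ to a map $\tilde\varphi$ between $N_1\times B_{g_E}(0,r_1)$ and $N_2\times B_{g_E}(0,100r_2)$, proves $\tilde\varphi$ is injective by a homotopy/fundamental-group argument, and then notes that $\tilde\varphi$ almost preserves the parallel $\R^{n-k}$ directions (so $k_1=k_2$) and that the normal projection of $\tilde\varphi(N_1')$ onto a fiber $N_2'$ is an almost isometry, concluding $N_1\simeq N_2$ for small $\ep$ via the finiteness from Lemma \ref{lem:finite}. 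You instead work purely at the base points: since each quotient is locally isometric to the homogeneous model $N_j\times\R^{n-k_j}$, the $C^2$-part of the $\ep$-closeness in Definition \ref{def:close}, compactness of $O(n)$ and the finiteness of $\mathcal M(n,\kappa)$ give an exact curvature-intertwining linear isometry $A$; the kernel of $\Rm$ is exactly the Euclidean direction (each $N_j$ is Einstein with constant $1/2$, so has no flat factor), which forces $k_1=k_2$ and a curvature-preserving isometry $A_0:T_{y_1}N_1\to T_{y_2}N_2$; Cartan--Ambrose--Hicks rigidity for complete, simply connected, locally symmetric spaces then yields a global isometry $N_1\to N_2$. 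Your middle step (the frame comparison and the splitting) does go through as you describe, so the "principal technical difficulty" you flag is not a gap. What each approach buys: yours is more local and economical, using only the $2$-jet of the metrics at one point and no lifting or covering-space topology; the paper's heavier construction, however, produces the embedded lift $\tilde\varphi$ of a whole chart and, as a byproduct, Remark \ref{rem:inj} (the injection of $\Lambda_1'$ into $\Lambda_2$), which is exactly what is reused in Claim 3 of the proof of Theorem \ref{thm:noncpt}; your argument proves the lemma as stated but would not by itself supply that group-level information.
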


\begin{proof}
From Lemma \ref{lem:finite}, we only need to prove the lemma for two fixed spaces $(N_i^{k_i} \times \R^{n-k_i})/\Gamma_i \in \mathcal M(n,\kappa)$ for $i=1,2$.

From Lemma \ref{lem:chart}, for $i \in \{1,2\}$, there is a fundamental chart $(N_i \times B_{g_E}(0,100 r_i))/\Lambda_i$ in $(N_i^{k_i} \times \R^{n-k_i})/\Gamma_i$ such that $x_i \in (N_i \times B_{g_E}(0, r_i))/\Lambda_i$. As discussed in Remark \ref{rem:dia}, we assume $r_i \in [n^{10}, \delta_0^{-1} n^{10}]$ and, without loss of generality, $r_1 \le r_2$. Moreover, we denote the projection map from $N_i \times B_{g_E}(0, 100 r_i)$ to $(N_i \times B_{g_E}(0, 100 r_i))/\Lambda_i$ by $\pi_i$.

By our assumption, there exists a map $\varphi$ from a neighborhood of $x_1$ to that of $x_2$ which is almost isometric. We denote a lift of $x_i$ to $N_i \times B_{g_E}(0,100 r_i)$ by $\tilde x_i$. Since $N_1 \times B_{g_E}(0,r_1)$ is simply connected, we can lift $\varphi$ to a smooth map
\begin{align*} 
\tilde \varphi: N_1 \times B_{g_E}(0, r_1) \to N_2 \times B_{g_E}(0, 100 r_2),
\end{align*} 
such that $\tilde \varphi(\tilde x_1)=\tilde x_2$. Now we claim that $\tilde \varphi$ is a smooth embedding. It is clear that $\tilde \varphi$ is a local diffeomorphism since $\varphi$ is a diffeomorphism. Therefore, we only need to prove that $\tilde \varphi$ is injective. Suppose there are $a,b \in N_1 \times B_{g_E}(0, r_1)$ with $a \ne b$ such that $\tilde \varphi(a)=\tilde \varphi(b)=c$. From our construction, we set $z:=\pi_1(a)=\pi_1(b)$ and $w:=\varphi(z)=\pi_2(c)$. We connect $a$ with $b$ by a geodesic segment $\tilde \gamma_1$ contained in $N_1 \times B_{g_E}(0, r_1)$ so that the loop $\gamma_1:=\pi_1 \circ \tilde \gamma_1$ represents a nontrivial element in $\Lambda_1$. In addition, if we set $\tilde \gamma_2:=\tilde \varphi\circ \tilde \gamma_1$ and $\gamma_2:=\pi_2 \circ \tilde \gamma_2$, then $\gamma_2$ is a trivial element in $\Lambda_2$. Therefore, there exists a homotopy $F$ of $\gamma_2$ to $w$ such that the image of $F$ is contained in $B(x_2,10r_1)$. Since $\varphi$ is a diffeomorphism that is almost isometric, we can pull back the homotopy $F$, and it implies that $\gamma_1$ is homotopic to $z$ in $(N_1 \times B_{g_E}(0, 100 r_1))/\Lambda_1$, which is a contradiction.

To summarize, $\tilde \varphi$ is an embedding that is almost isometric. In particular, $\tilde \varphi$ almost preserves the parallel directions $\R^{n-k_1}$. If $\ep$ is sufficiently small, it is easy to see $k_1=k_2$ and $\tilde \varphi$ almost preserves the vertical direction as well. Therefore, if we denote the copies of $N_1$ and $N_2$ through $\tilde x_1$ and $\tilde x_2$ by $N_1'$ and $N_2'$ respectively, then the normal projection of $\tilde \varphi(N_1')$ to $N'_2$ is almost an isometry, see, e.g., \cite[Appendix 2]{CFG92}. Therefore, $N_1$ is isometric to $N_2$ if $\ep$ is sufficiently small.
\end{proof}

\begin{rem}\label{rem:inj}
The proof of Lemma \ref{lem:close} indicates that if $\ep$ is sufficiently small, then $\varphi$ injects $\Lambda_1'$ into $\Lambda_2$, where $\Lambda_1'$ is the group of isometries obtained by restriction of $\Lambda_1$ on $N_1 \times B_{g_E}(0, r_1)$.
\end{rem}

Now we prove the main result of this section.

\begin{thm} \label{thm:noncpt}
Let $(M^n,g(t))_{t \in (-\infty,0]}$ be a simply connected, noncompact, Type-I, $\kappa$-solution. Then $(M^n,g(t))$ is isometric to $N^k \times \R^{n-k}$, where $N$ is a simply connected compact symmetric space.
\end{thm}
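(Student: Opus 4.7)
The plan is to combine a de~Rham decomposition of $M$ with the asymptotic-fibration strategy sketched in the introduction. By the de~Rham decomposition theorem for complete simply-connected Riemannian manifolds, $(M,g(0))$ splits as a Riemannian product of irreducible simply-connected factors; since the Ricci flow preserves this splitting and the weakly PIC$_2$ and Type-I conditions each descend to factors, I obtain $(M,g(t))=(M_1,g_1(t))\times\cdots\times(M_r,g_r(t))$ with every $(M_i,g_i(t))$ a simply-connected, irreducible, Type-I $\kappa_i$-solution for some $\kappa_i>0$. Compact irreducible factors are simply-connected compact symmetric spaces by Theorem~\ref{thm:cpt}, while one-dimensional flat factors contribute to an $\R^{n-k}$ direction. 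Thus it suffices to rule out the existence of any simply-connected, noncompact, irreducible, nonflat, Type-I $\kappa$-solution.

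Assume for contradiction such an $(M,g(t))$ exists. Fix $p\in M$, set $\tau_i=4^i$ and $t_i=-\tau_i$. By Lemma~\ref{lem:asym}(ii), the rescalings $(M,\tau_i^{-1}g(\tau_i\,\cdot\,),p)$ subconverge along a subsequence to an asymptotic Ricci shrinker $(N^k\times\R^{n-k})/\Gamma\in\mathcal M(n,\kappa)$. The distance monotonicity of Lemma~\ref{lem:asym}(i) forces the $g(t_i)$-diameter to dominate the (infinite) $g(0)$-diameter, so the limit cannot be compact and $n-k\ge 1$; nonflatness of any asymptotic shrinker forces $k\ge 1$ and a nontrivial compact simply-connected symmetric factor $N$, which by Lemma~\ref{lem:close} is independent of the subsequence. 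For every sufficiently large $i$, each rescaled ball $B_{g_i(-1)}(p,\ep^{-1})$ is $\ep$-close to a union of fundamental charts $(N\times B_{g_E}(\tilde q_j,r_j))/\Lambda_j$ furnished by Lemma~\ref{lem:chart}; using simply-connectedness of $M$ together with the lifting argument in Lemma~\ref{lem:close} and Remark~\ref{rem:inj}, these pieces pull back to an honest unquotiented local fibration on the unrescaled ball $B_i:=B_{g(t_i)}(p,\ep^{-1}\sqrt{|t_i|})$ with fiber isometric to $N$ up to $O(\ep)$ error.

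Since the $B_i$ exhaust $M$ by Lemma~\ref{lem:asym}(i), these local fibrations patch together into a smooth global fibration $\pi:M\to B$ with compact fiber diffeomorphic to $N$. However, $M$ is diffeomorphic to $\R^n$ by \cite[Theorem~1.2]{CW15}, and a classical theorem of Borel--Serre \cite{BS50} forbids any locally trivial fibration of $\R^n$ with nontrivial compact fiber, giving the desired contradiction. The main obstacle in the argument is precisely this gluing step: even though every $B_i$ individually carries a model fibration with fiber close to $N$, the fibers on overlapping balls or at different scales $t_i$ could a priori rotate relative to one another, and the quotient group $\Gamma$ could act nontrivially in a way that obstructs a consistent global lift. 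The uniqueness of the asymptotic factor $N$ (Lemma~\ref{lem:close}), the canonical horizontal/vertical splitting of the tangent bundle of the model (preserved under $C^{\infty}$-convergence), and the simply-connectedness of $M$ are what allow the local unquotiented fibrations to be patched into an honest global bundle structure with compact fiber $N$, producing the required contradiction.
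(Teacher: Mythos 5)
Your overall strategy -- asymptotic Ricci shrinkers, a local fibration with compact fiber $N$, exhaustion of $M$, gluing, and the Borel--Serre obstruction on $\R^n$ -- is the same as the paper's (your de~Rham reduction in place of the paper's induction via the soul splitting of \cite{CW15} is a cosmetic difference), but the two steps you compress into single sentences are precisely where the paper has to work, and as written they are genuine gaps. First, ``$N$ is independent of the subsequence by Lemma \ref{lem:close}'' does not follow: Lemma \ref{lem:close} compares two model spaces that are $\ep$-close \emph{to each other}, whereas asymptotic limits taken at scales $\tau_i$ and $\tau_j$ with $j\gg i$ differ by a huge rescaling and are not directly comparable. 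The paper first upgrades subsequential convergence to the statement that for \emph{every} $\tau\ge T(\ep)$ the rescaled flow is $\ep$-close to some element of $\mathcal M(n,\kappa)$ (Claim 1, using the finiteness of the moduli space), and then compares only \emph{consecutive} scales through the identity $g_{i-1}=4\,g_i(-1/4)$ together with the known evolution of the model flow (the $\R^{n-k}$ directions are unchanged while the fiber direction scales), which produces the ``squeezed'' chart $(N_i\times\widetilde W_i')/\Lambda_i$ to which Lemma \ref{lem:close} can legitimately be applied (Claim 2). Without this consecutive-scale mechanism your uniqueness of $N$ across scales is unproved, and with it your ``for every sufficiently large $i$'' (rather than along a subsequence) also needs Claim 1.

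Second, and more seriously, the passage from the quotient charts $(N\times\widetilde W)/\Lambda$ to an ``honest unquotiented local fibration'' is asserted from ``simply-connectedness of $M$ together with Remark \ref{rem:inj}'', but simple connectivity of $M$ does not by itself prevent a fiber from being a nontrivial quotient $N/\Lambda$: a loop that is nontrivial in a fiber (hence in $\pi_1(U_i)$) could a priori be contracted through an enormous region of $M$, far outside any single chart. The paper's Claim 3 is exactly the missing argument: assuming some fiber is not simply connected, it propagates the offending loop from scale to scale (via Remark \ref{rem:inj}), keeping it nontrivial in $\pi_1(U_i)$ by homotopies whose images have uniformly bounded size (the sequence $a_i\le L+8(n-1)C_0$, controlled by the distance comparison of Lemma \ref{lem:asym}(i)), and then contradicts this because the fixed global contracting homotopy, whose image lies in some $B_{g_{j-1}}(p,K)$, eventually fits inside a single $U_i$ at a large enough scale. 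You flag this difficulty yourself, but listing the ingredients (uniqueness of $N$, the horizontal/vertical splitting, simple connectedness) is not an argument; without a Claim 3-type multi-scale reasoning the fibers could remain quotients, the gluing step (the paper applies \cite[Lemma $1.4$]{CG} to the now-trivial $N$-bundles over the exhausting sets $V_i$) cannot start, and the contradiction with \cite{BS50} is out of reach.
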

\begin{proof}
We prove the theorem by induction. If $n=2$, the conclusion obviously holds, since all $2$-dimensional $\kappa$-solutions are $S^2,\RP^2$ and $\R^2$, proved by Perelman \cite{Pe1}. Now we assume the conclusion holds for any dimension smaller than $n$. 

Since $(M^n,g(t))$ has weakly PIC$_2$, it follows from \cite[Theorem $1.2$]{CW15} that $(M^n,g(t))$ is isometric to $\Sigma \times F$, where $\Sigma$ is the $k$-dimensional soul and $F$ is diffeomorphic to $\R^{n-k}$. If $k \ge 1$, it is clear that $(M^n,g(t))$ splits as Type-I $\kappa$-solutions on $\Sigma$ and $F$. Therefore, the conclusion follows from our inductive assumption.

We assume $k=0$ and hence $M$ is diffeomorphic to $\R^n$. If $(M,g(t))$ is flat, then the proof is complete. For the rest of the proof, we assume that $(M,g(t))$ is nonflat and derive a contradiction. 

Let $p$ be a fixed point in $M$ and $\delta=\delta(\ep)$ a positive function such that $\lim_{\ep \to 0} \delta (\ep)=0$. In the following proof, $\delta$ may differ line by line.

\textbf{Claim 1}: For any $\ep>0$, there exists a $T=T(\ep)>0$ such that for any $\tau \ge T$, $(M,\tau^{-1}g(\tau t),p)$ is $\ep$-close to a space $(M_{\infty},g_{\infty}(t),p_{\infty})$ for $t \in [-10,-1/10]$, where $(M_{\infty},g_{\infty}(-1))\in \mathcal M(n,\kappa)$.

Indeed, suppose the claim does not hold for some $\ep$ and a sequence $\tau_i \to \infty$, then by Lemma \ref{lem:asym}, $(M,\tau_i^{-1}g(\tau_i t),p)$ subconverges smoothly to a Ricci flow associated with a Ricci shrinker in $\mathcal M(n,\kappa)$, which is a contradiction.

Next we define a sequence $\tau_i:=4^i$ and set $g_i(t)=\tau_i^{-1}g(\tau_i t)$ and $g_i=g_i(-1)$. Then it is clear that
\begin{align}
g_{i-1}=\tau_{i-1}^{-1} g(-\tau_{i-1})=4 \tau_{i}^{-1}g(-\tau_{i}/4)=4 g_{i}(-1/4). \label{E203a}
\end{align} 

We set $I$ to be the smallest integer such that $\tau_{I-1} \ge T(\ep)$. For any $i \ge I$, it follows from Claim 1 that $(M,g_i,p)$ is $\ep$-close to a Ricci shrinker $ \lc N_i^{k_i} \times \R^{n-k_i})/\Gamma_i,x_i \rc \in \mathcal M(n,\kappa)$. By Definition \ref{def:close}, there exists a diffeomorphism $\varphi_i$ from an open neighborhood containing $B_{g_i}(p,4\ep^{-1}/5)$ onto an open neighborhood containing $B(x_i,4\ep^{-1}/5) \subset (N_i^{k_i} \times \R^{n-k_i})/\Gamma_i$ such that $\varphi_i(p)=x_i$. As discussed earlier, we regard $(N_i^{k_i} \times \R^{n-k_i})/\Gamma_i$ as a singular fibration $(N_i^{k_i} \times \R^{n-{k_i}})/\Gamma_i \overset{\pi_i}{\longrightarrow} \R^{n-k_i}/\Gamma_{1,i}$. If we denote the induced distance function on $\R^{n-k_i}/\Gamma_{1,i}$ by $d_i$, then we define
\begin{align*} 
W_i:=&B_{d_i}(\pi_i(x_i), \ep^{-1}/4), \quad W_i':= B_{d_i}(\pi_i(x_i), \ep^{-1}/2), \\
V_i:=&\varphi_i^{-1} \lc \pi_i^{-1} (W_i) \rc, \quad \quad \, \, U_i:=\varphi_i^{-1} \lc \pi_i^{-1}(W'_i) \rc.
\end{align*} 
In particular, we obtain a singular fibration structure on $U_i$. 

\textbf{Claim 2}: $N_i$ is isometric to $N_{i-1}$ for all $i \ge I$, if $\ep$ is sufficiently small.

From the fibration structure, there exist an open set $\widetilde W_{i} \subset \R^{n-k_i}$, two subgroups $G_i \subset \Gamma_{1,i}$ and $\Lambda_{i} \subset \Gamma_{i}$, and a map $\psi_i$ such that $\widetilde W_i/G_i$ is an orbifold chart and $\psi_i: (N_{i}\times \widetilde W_i)/\Lambda_{i} \to \pi_{i}^{-1}(W_{i})$ is an isometry. Since $G_i$ is finite, there exists a point $y_i \in \widetilde W_i$ fixed by $G_i$. For simplicity, we assume $y_i=0$ and define 
\begin{align*} 
\widetilde W'_i=\left \{2x \,\mid x\in \widetilde W_i \right \}.
\end{align*} 
Therefore, we have the diffeomorphism
\begin{align*}
(N_i \times \widetilde W'_i) /\Lambda_i \overset{\iota_i}{\longrightarrow}(N_i \times \widetilde W_i) /\Lambda_i
\end{align*} 
such that $\iota_i$ multiplies the horizontal coordinates by $1/2$. From Claim 1 and \eqref{E203a}, we immediately conclude that $(V_i,g_{i-1},p)$ is $\delta$-close to $\lc (N_i \times \widetilde W'_i) /\Lambda_i,z_i \rc$, where $z_i:=(\psi_i\circ \iota_i)^{-1}(x_i)$. Here, we have used the fact that the vertical direction in $4 g_{i}(-1/4)$ is almost unchanged.

Since $(M,g_{i-1},p)$ is $\ep$-close to $\lc (N_{i-1} \times \R^{n-k_{i-1}}) /\Lambda_{i-1}, x_{i-1} \rc$, it is clear from Lemma \ref{lem:close} that $N_i$ is isometric to $N_{i-1}$ for any $i \ge I$, if $\ep$ is sufficiently small. From now on, we denote all $N_i$ by a common $N=N^k$.

\textbf{Claim 3}: $V_i$ is simply connected for any $i \ge I$, if $\ep$ is sufficiently small.

To prove Claim 3, we only need to show that each fiber in $V_i$ is simply connected. Indeed, if all fibers are simply connected, $V_i$ is a regular fibration over $B_{g_E}(\pi_i(x_i),\ep^{-1}/4)$ with fiber $N$. Since $B_{g_E}(\pi_i(x_i),\ep^{-1}/4)$ is contractible, we conclude that $V_i$ is diffeomorphic to $N \times B_{g_E}(0,1)$.

Suppose the conclusion does not hold for some $j-1 \ge I$, then there exists a point $x \in V_{j-1}$ such that the fiber $F_{j-1}$ containing $x_{j-1}$ is not simply connected. Therefore, there exists a loop $\gamma_{j-1}$ based at $x_{j-1}$, which is contained in $F_{j-1}$ and represents a nontrivial element in $\pi_1(F_{j-1})$. In particular, it follows from the structure of the fibration that $\gamma_{j-1}$ also represents a nontrivial element in $\pi_1(U_{j-1})$.

On the one hand, since $M$ is simply connected, there exists a homotopy $\Psi(u,v) :[0,1] \times [0,1] \to M$ such that $\Psi(0,v)=\gamma_{j-1}(v)$ and $\Psi(1,v)=\Psi(u,0)=\Psi(u,1)=x_{j-1}$ for any $(u,v) \in [0,1] \times [0,1]$. Since the image of $\Psi$ is compact, we assume that
\begin{align*} 
\text{Image}\, (\Psi) \subset B_{g_{j-1}}(p,K) 
\end{align*} 
for some large constant $K>0$. 

From the above arguments, we know that $(M,g_{j-1},p)$ is $\delta$-close to $\lc (N \times \widetilde W'_j) /\Lambda_j,z_j \rc$. As in the proof of Lemma \ref{lem:close} (see Remark \ref{rem:inj}), we conclude that there exists a loop $\gamma_j$ based at a point $x_j \in V_j$ such that $\gamma_j$ is contained in the fiber $F_j$ through $x_j$ and $\gamma_j$ represents a nontrivial element in $\pi_1(F_j)$. Moreover, $\gamma_{j-1}$ is homotopic to $\eta_j *\gamma_j $, where $\eta_j$ is a geodesic segment with respect to $g_{j-1}$ connecting $x_{j-1}$ with $x_j$ and $*$ is the joining of two curves. Besides, the image of the homotopy $\Psi_j$ is contained in $B_{g_{j-1}}(x_{j-1},L)$ for some constant $L$ depending only on $n$ and $\kappa$. By iteration, for any $i \ge j$, we can successively define a loop $\gamma_{i}$ which is based at $x_i$ and contained in the fiber $F_i$ through $x_i$ and a homotopy $\Psi_i$ between $\gamma_{i-1}$ and $\eta_i * \gamma_i$, where $\eta_i$ is a geodesic segment with respect to $g_{i-1}$ connecting $x_{i-1}$ with $x_i$. Moreover, the image of $\Psi_i$ is contained in $B_{g_{i-1}}(x_{i-1},L)$ and $\gamma_i$ represents a nontrivial element in $\pi_1(F_i)$. Notice that $\gamma_i$ also represents a nontrivial element in $\pi_1(U_i)$.

Now we define a sequence $\{a_i\}_{i \ge j-1}$ such that $a_{j-1}=0$ and $a_{i+1}=4(n-1)C_0+\frac{a_i+L}{2}$. Then it follows immediately from Lemma \ref{lem:asym} (i) that for any $i \ge j$,
\begin{align*} 
\text{Image}\, \lc \Psi_{j}* \cdots * \Psi_{i} \rc \subset B_{g_{i}}(x_{j-1},a_i),
\end{align*} 
where we use $*$ to denote the joining of homotopies. Clearly, the sequence $a_i \le L+8(n-1)C_0$ and hence $\gamma_{j-1}$ is homotopic to $\gamma_i$ in $U_i$ for any $i \ge j$, if $\ep$ is sufficiently small. In particular, $\gamma_{j-1}$ represents a nontrivial element in $\pi_1(U_i)$.

On the other hand, it follows from Lemma \ref{lem:asym} (i) that 
\begin{align*} 
B_{g_{j-1}}(p,K) \subset B_{g_i}(p,\ep^{-1}/8)
\end{align*} 
if $i$ is sufficiently large. Therefore, the image of the homotopy $\Psi$ is contained in $U_i$ and hence the $\gamma_{j-1}$ is trivial in $\pi_1(U_i)$. From this, we derive a contradiction. 

In sum, we have shown that the fibration structure $V_i$ is trivial. In particular, for any $i \ge I$, there exists a trivial fibration
\begin{align*} 
N \hookrightarrow V_i \overset{f_i}{\longrightarrow} B_{g_E}(0,\ep^{-1}/4).
\end{align*} 
Moreover, $(V_i,g_i,p)$ is $\delta$-close to $N^k \times \R^{n-k}$. From Lemma \ref{lem:asym} (i), it is clear that $V_I \subset V_{I+1} \subset \cdots$ is an exhaustion of $M$. Therefore, it follows from a standard argument, see, for example, \cite[Lemma $1.4$]{CG}, that we can modify and glue all those fibrations so that we obtain a global fibration
\begin{align*} 
N \hookrightarrow M \overset{f}{\longrightarrow} Y.
\end{align*} 
where $Y$ is an open manifold. 

Since $M$ is diffeomorphic to $\R^n$, we obtain a contradiction from \cite{BS50}, which states that for any fibration on $\R^n$ with compact fibers, the fiber must be a single point.
\end{proof}

\emph{Proof of Theorem \ref{thm:main2}}: It follows immediately from Theorem \ref{thm:cpt}, Lemma \ref{lem:quo} and Theorem \ref{thm:noncpt}.

From \cite[Lemma 4.2]{BCW19}(see also \cite[Proposition 6.2]{LN20}), we know that any complete ancient solution to the Ricci flow with weakly PIC$_1$ automatically has weakly PIC$_2$. Therefore, the following corollary is immediate from Theorem \ref{thm:main2}.

\begin{cor}
Let $(M^n,g(t))_{t \in (-\infty,0]}$ be a $\kappa$-noncollapsed, Type-I, complete ancient solution to the Ricci flow with weakly \emph{PIC$_1$}. Then it is isometric to a finite quotient of $N^k \times \R^{n-k}$, where $N$ is a simply connected compact symmetric space.
\end{cor}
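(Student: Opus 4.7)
The plan is to reduce Corollary \ref{cor:pic1} directly to Theorem \ref{thm:main2} by upgrading the curvature hypothesis from weakly PIC$_1$ to weakly PIC$_2$. Concretely, the only extra ingredient needed is the curvature-improvement result stated in the paragraph immediately preceding the corollary: by \cite[Lemma 4.2]{BCW19} (equivalently, \cite[Proposition 6.2]{LN20}), any complete ancient solution to the Ricci flow whose curvature is weakly PIC$_1$ at every time automatically satisfies weakly PIC$_2$ at every time. This is a strong maximum principle / Hamilton-type preservation argument applied backwards in time, but for our purposes we invoke it as a black box.

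The execution is then essentially a single step. First, I would apply the cited improvement result to $(M^n, g(t))_{t\in(-\infty,0]}$ to conclude that $(M^n,g(t))$ has weakly PIC$_2$ at every time. Since the remaining hypotheses, namely $\kappa$-noncollapsedness, Type-I, and completeness on $(-\infty,0]$, are preserved verbatim, the solution now satisfies all the assumptions of Theorem \ref{thm:main2}. Applying Theorem \ref{thm:main2} gives that $(M^n,g(t))$ is isometric to a finite quotient of $N^k\times\R^{n-k}$ with $N$ a simply-connected compact symmetric space, which is exactly the conclusion claimed.

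There is no genuine obstacle here; the proof is a two-line application. The only point worth verifying carefully is that the cited PIC$_1 \Rightarrow$ PIC$_2$ improvement applies under exactly the regularity assumed in Corollary \ref{cor:pic1}, i.e.\ completeness on the ancient time interval with the weakly PIC$_1$ condition holding pointwise and at every time. Both \cite[Lemma 4.2]{BCW19} and \cite[Proposition 6.2]{LN20} are stated in this generality, so no additional bounded-curvature hypothesis needs to be invoked (the Type-I bound \eqref{E200} would suffice in any case). Thus the proof reduces to citing the improvement result and then quoting Theorem \ref{thm:main2}.
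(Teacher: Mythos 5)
Your proposal is correct and follows exactly the paper's own argument: the paragraph preceding the corollary invokes \cite[Lemma 4.2]{BCW19} (see also \cite[Proposition 6.2]{LN20}) to upgrade weakly PIC$_1$ to weakly PIC$_2$ for complete ancient solutions, and then the conclusion is immediate from Theorem \ref{thm:main2}. Your remark that no bounded-curvature hypothesis beyond the stated assumptions is needed is also consistent with how the paper uses the cited improvement.
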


\section{Proof of the main theorem}

We recall the following definitions in the K\"ahler Ricci flow. In the following, we always denote the complex dimension by $m$ and the real dimension by $n=2m$. 

\begin{defn}\label{def:flow 2} Suppose that $(M^m,g(t))_{t \in (-\infty,0]}$ be a complete ancient solution to the K\"ahler Ricci flow $\partial_t g=-2 \Rc$ on a complex manifold $M$.
\begin{enumerate}[label=(\roman*)]

\item $(M,g(t))_{t \in (-\infty,0]}$ has nonnegative bisectional curvature \emph{($BK \ge 0$ for short)} if for any $(x,t) \in M \times (-\infty,0]$ and any $Z,W \in T^{1,0}_x M$, we have at $(x,t)$,
\begin{align*}
\emph{Rm}(Z,\bar W,\bar Z, W) \ge 0.
\end{align*}

\item \emph{($\kappa$-solution to the K\"ahler Ricci flow)}
$(M,g(t))_{t \in (-\infty,0]}$ is called a $\kappa$-solution if it has $BK \ge 0$, uniformly bounded curvature and is $\kappa$-noncollapsed.
\end{enumerate}
\end{defn}

Notice that the K\"ahler Ricci flow defined here agrees with the real Ricci flow, but differs by a constant $2$ from the convention $\partial_t g_{i \bar j}=-R_{i \bar j}$. Moreover, the curvature operator here is different by a sign from that in most articles in K\"ahler geometry. The concepts of scalar curvature, Laplacian, etc., always refer to the concepts in real Riemannian geometry.

\subsection*{3.1\quad$BK \ge 0$ implies weakly PIC$_2$}
\addtocontents{toc}{\protect\setcounter{tocdepth}{2}}

First, we prove that any $\kappa$-solution to the K\"ahler Ricci flow is of Type-I. The proof is similar to \cite[Lemma $6.2$]{CL20}.

\begin{lem}\label{lem:type1}
Let $(M^m,g(t))_{t \in (-\infty,0]}$ be a $\kappa$-solution to the K\"ahler Ricci flow. Then there exists a constant $C_0>0$ such that
\begin{align*}
|\emph{Rm}|(x,t) \le \frac{C_0}{1+|t|}.
\end{align*}
for any $(x,t) \in M \times (-\infty,0]$.
\end{lem}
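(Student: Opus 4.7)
The plan is to argue by contradiction via a Type-II point-picking argument. Suppose no such $C_0$ exists; then there is a sequence $(y_k, s_k) \in M \times (-\infty, 0]$ with $(1+|s_k|)|\Rm|(y_k, s_k) \to \infty$. The uniform spacetime curvature bound on the $\kappa$-solution forces $|s_k| \to \infty$. Setting $T_k := 2 s_k \to -\infty$, one has $s_k \in [T_k, T_k/2]$ and the auxiliary functional $(t-T_k)|\Rm|(x,t)$ already diverges at $(y_k, s_k)$. I would then select near-maximizers $(x_k, t_k) \in M \times [T_k, T_k/2]$ of this functional; writing $Q_k := |\Rm|(x_k, t_k)$, the bound $t_k - T_k \le |s_k|$ forces both $Q_k(t_k - T_k) \to \infty$ and $-Q_k t_k \to \infty$, and the parabolically rescaled flows $\tilde g_k(s) := Q_k\,g(t_k + s/Q_k)$ are defined on time intervals exhausting $(-\infty,\infty)$ and carry uniformly bounded curvature on every fixed parabolic neighborhood of the base point.

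Next I would pass to a Cheeger--Gromov--Hamilton limit $(M_\infty, \tilde g_\infty(s), x_\infty)_{s \in (-\infty,\infty)}$, using the scale-invariance of $\kappa$-noncollapsing to control injectivity radii at the base points. The limit inherits the \ka structure, nonnegative bisectional curvature, $\kappa$-noncollapsing and uniformly bounded spacetime curvature. It is eternal, satisfies $|\Rm|_{\tilde g_\infty(0)}(x_\infty)=1$, and is therefore nonflat. By construction, $|\Rm|$ on the limit attains its spacetime maximum at $(x_\infty,0)$; since $BK \ge 0$ implies a pointwise bound $|\Rm| \le C(m) R$, the scalar curvature $R$ also attains its spacetime maximum at $(x_\infty,0)$.

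The final step invokes the equality case of Cao's differential Harnack inequality for the \ka Ricci flow with $BK \ge 0$: an eternal, nonflat \ka Ricci flow with nonnegative bisectional curvature whose scalar curvature attains its spacetime maximum must be a steady \ka Ricci soliton. This produces a complete, $\kappa$-noncollapsed, nonflat, steady \ka Ricci soliton with $BK \ge 0$, which directly contradicts the Deng--Zhu nonexistence theorem \cite{DZ20a}, completing the proof.

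The main obstacle I expect is the point-picking step: one needs the uniform curvature bound on the rescaled flow to persist on parabolic cylinders of unbounded size in both time directions, which is not immediate from the forward-in-time control given by the auxiliary functional and requires a standard but delicate iteration. A secondary subtlety is rigorously invoking the equality case of Cao's Harnack in this noncompact, nonflat setting; once the spacetime maximum of $R$ is in hand, this is a well-established consequence of the strong maximum principle applied to the trace of the Harnack quadratic.
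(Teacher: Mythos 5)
Your strategy is the same one the paper relies on: the paper does not prove this lemma itself but quotes \cite[Lemma 6.2]{CL20}, and, as the introduction explains, that proof is precisely a point-picking argument producing an eternal limit, Cao's rigidity for eternal K\"ahler Ricci flows with $BK\ge 0$ whose scalar curvature attains its spacetime maximum (giving a steady gradient soliton), and the nonexistence theorem of Deng--Zhu \cite{DZ20a}. So the route matches; the genuine gap is in your point-picking step, and it is exactly the one you flag but do not close.

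Concretely, the functional $(t-T_k)|\Rm|(x,t)$ on $M\times[T_k,T_k/2]$ controls the rescaled curvature only for original times $t\le T_k/2$: nothing prevents the near-maximizer $t_k$ from lying arbitrarily close to $T_k/2$ while $Q_k$ is much smaller than the global curvature bound $\Lambda$, in which case the rescaled flow exits the controlled region after rescaled time $Q_k(T_k/2-t_k)$, which need not tend to infinity, and beyond it you only have the bound $\Lambda/Q_k$, which may blow up. Hence your limit is a priori only an ancient solution whose curvature is largest at its final time, and Cao's theorem, which requires an \emph{eternal} solution attaining the spacetime maximum of $R$, does not apply; no iteration of the same one-sided bound repairs this. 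The standard cure is Hamilton's Type II(b) selection for ancient solutions: nearly maximize the two-sided quantity $(t-T_k)(-t)R(x,t)$ over $M\times(T_k,0)$, which forces both $Q_k(t_k-T_k)\to\infty$ and $-Q_kt_k\to\infty$ and gives curvature bounds of the form $(1+\ep_k)\tfrac{(t_k-T_k)(-t_k)}{(t-T_k)(-t)}Q_k$ on both sides of $t_k$, so the limit is eternal. Note also that the selection should be run with $R$ itself rather than $|\Rm|$: your inference that $R$ attains its spacetime maximum because $|\Rm|$ does and $|\Rm|\le C(m)R$ is a non sequitur (comparability does not transfer the location of a maximum); under $BK\ge 0$ the bound $|\Rm|\le C(m)R$ is instead what supplies the curvature bounds needed for the Cheeger--Gromov--Hamilton compactness. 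With these standard corrections the argument closes as you describe, via Cao's rigidity and \cite{DZ20a}.
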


\begin{proof} 
Suppose the $(M^m,g(t))_{t \in (-\infty,0]}$ is of Type-II. We take any $T_i \to -\infty$ and $\delta_i \to 0^+$ and choose $(x_i,t_i) \in M \times [T_i,0]$ so that 
\begin{align*}
|t_i|(t_i-T_i)R(x_i,t_i)\ge (1-\delta_i)\sup_{M \times [T_i,0]} |t|(t-T_i)R(x,t)
\end{align*}
Notice that the choice is possible since the curvature is assumed to be uniformly bounded on $M \times (-\infty,0]$. For the rescaled metric $g_i(t):=Q_i g(t_i+Q_i^{-1}t)$ where $Q_i:=R(x_i,t_i)$, it follows by direct computation-see \cite[Proposition $8.20$]{CLN06}-that $(M,g_i(t),x_i)$ converges smoothly to $(M_\infty,g_\infty(t),x_\infty)_{t \in (-\infty,\infty)}$ which is an eternal $\kappa$-solution to the K\"ahler Ricci flow. Moreover, $R_{g_{\infty}}(x,t)\le 1$ on $M_{\infty} \times (-\infty,\infty)$ and $R_{g_{\infty}}(x_\infty,0)= 1$ by the construction. After taking the universal cover of $M_\infty$ and using Cao's dimension reduction argument \cite[Theorem $2.1$]{Cao04}, we may assume that $M_\infty$ is simply connected and has positive Ricci curvature. Therefore, it follows from \cite[Theorem $1.3$]{Cao97} that $(M_\infty,g_\infty)$ is a nonflat, $\kappa$-noncollapsed K\"ahler Ricci steady soliton with nonnegative bisectional curvature. However, such a K\"ahler steady soliton is excluded by \cite[Theorem $1.2$]{DZ20a}.
\end{proof}

Next, we obtain the following curvature improvement for $\kappa$-solutions. Notice that a stronger curvature improvement was proved for K\"ahler surfaces in \cite[Lemma $4.6$]{CL20}.

\begin{thm} \label{thm:pic2}
Let $(M^m,g(t))_{t \in (-\infty,0]}$ be a complete ancient solution to the K\"ahler Ricci flow with $BK \ge 0$. Then it has weakly \emph{PIC}$_2$.
\end{thm}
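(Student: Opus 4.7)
The plan is to adapt the strategy of \cite[Lemma 4.2]{BCW19}, which upgrades weakly PIC$_1$ to weakly PIC$_2$ on ancient Ricci flows via a strong maximum principle argument, to the K\"ahler setting with $BK \geq 0$ playing the role of the one-sided positivity hypothesis. The first step is purely algebraic: rewrite the PIC$_2$ condition using the K\"ahler type decomposition $T_\C M = T^{1,0} \oplus T^{0,1}$. Writing $\zeta = Z + \bar A$ and $\eta = W + \bar B$ with $Z, A, W, B \in T^{1,0}_xM$, the K\"ahler identity $\text{Rm}(JX, JY, \cdot, \cdot) = \text{Rm}(X, Y, \cdot, \cdot)$ annihilates every term of $\text{Rm}(\zeta, \eta, \bar\zeta, \bar\eta)$ whose first or last pair contains two vectors of the same type. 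The four surviving terms split as two ``diagonal'' contributions $\text{Rm}(Z, \bar B, \bar Z, B) + \text{Rm}(\bar A, W, A, \bar W)$, both nonnegative by $BK \geq 0$, plus a complex conjugate pair $2\,\text{Re}\,\text{Rm}(Z, \bar B, A, \bar W)$ of mixed bisectional terms. Thus PIC$_2$ in the K\"ahler setting reduces to a pointwise Cauchy-Schwarz type inequality for the bisectional curvature viewed as a Hermitian form on a subspace of $T^{1,0}\otimes T^{1,0}$.

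Next, I would apply Hamilton's strong maximum principle, in the Brendle-Schoen form used by \cite{BCW19}, to the PIC$_2$ deficit $\Phi(x,t) := -\inf_{\zeta, \eta \neq 0} \text{Rm}(\zeta, \eta, \bar\zeta, \bar\eta)/(|\zeta|^2|\eta|^2)$. Both $BK \geq 0$ (preserved under the K\"ahler Ricci flow by Bando-Mok) and PIC$_2$ (preserved under the Ricci flow by Brendle-Schoen) are closed convex invariant cones, so their intersection is invariant; the crucial analytic input is that on the PIC$_2$ boundary the Hamilton reaction term $Q(R) = R^2 + R^{\#}$ in the ODE associated with $\text{Rm}$ points strictly into the interior of the PIC$_2$ cone at any saturating direction, unless the curvature tensor admits a rigid reducible decomposition. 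Combined with the ancient hypothesis, backward propagation of the strong maximum principle along $t \le 0$ forces a PIC$_2$-saturating configuration to persist for all time, and the rigidity output of the maximum principle yields a local isometric splitting, reducing to a lower complex dimension where the conclusion holds by induction.

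The main obstacle I expect is the K\"ahler algebraic lemma that makes the strong maximum principle work: namely, that for a K\"ahler algebraic curvature tensor with $BK \geq 0$ which saturates PIC$_2$ in some off-diagonal direction $(\zeta_0, \eta_0)$, one has $\langle Q(R)\cdot (\zeta_0 \wedge \eta_0),\, \bar\zeta_0 \wedge \bar\eta_0 \rangle > 0$ unless $R$ admits a reducible decomposition respecting both the K\"ahler structure and the null direction. This is the K\"ahler analog of the Brendle-Schoen algebraic identity driving \cite[Lemma 4.2]{BCW19}, and its verification requires a careful index-by-index analysis exploiting the K\"ahler symmetries together with the bisectional positivity to convert the off-diagonal deficit into a strictly positive quadratic expression in the diagonal bisectional components.
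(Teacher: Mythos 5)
Your opening algebraic step is correct and coincides with the paper's: writing $\zeta=Z+\bar A$, $\eta=W+\bar B$, the $(1,1)$-type of the K\"ahler curvature kills all same-type pairs and leaves two bisectional terms plus $2\,\mathfrak{Re}\,\Rm(Z,\bar B,A,\bar W)$, which is exactly the decomposition \eqref{E301ac}. From there, however, the proposal takes a route that cannot get started. A Brendle--Schoen strong maximum principle / invariant-cone argument presupposes that the solution lies in the weakly PIC$_2$ cone at some time, so that one can discuss null directions on the boundary of that cone and extract rigidity; here weakly PIC$_2$ is precisely the conclusion, and there is no time slice at which it is known. Forward invariance of the intersection of the two cones, and the claim that $Q(R)=R^2+R^{\#}$ ``points into the interior at a saturating direction,'' are therefore inapplicable: a priori the curvature may lie strictly outside the PIC$_2$ cone, and no boundary rigidity statement addresses that. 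This is also a mischaracterization of \cite[Lemma 4.2]{BCW19}: that lemma, and the paper's proof of Theorem \ref{thm:pic2}, is not a rigidity/splitting argument but a quantitative Riccati argument. One introduces a scalar deficit $\lambda$, proves in the barrier sense $\tl \lambda^- \ge 2(\lambda^-)^2$, and then uses ancientness: a strictly negative value of $\lambda^-$ at some point would, by ODE comparison backwards in time, force blow-up at a finite negative time, contradicting existence on $(-\infty,0]$ (this is \cite[Corollary 2.4]{CL20}). Your proposal never derives any such quantitative differential inequality for your deficit $\Phi$, and without it the ancient hypothesis is never actually used.

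Moreover, the deficit you choose, normalized by $|\zeta|^2|\eta|^2$, is not the quantity for which the argument works. The paper normalizes only the $(0,1)$-projections, $|P(Z)|=|P(W)|=1$, leaving the $(1,0)$-parts free (the set $S$ in \eqref{E301bx}). This choice is what produces the two key estimates: (i) varying the free $(1,0)$-part at a minimizer gives the first-variation identity $\lambda=\mathfrak{Re}\,\Rm(v,\bar u)$, hence $|\lambda|\le|\Rm(v)|$, which converts the reaction term $2|\Rm(v)|^2+2\Rm^{\#}(v,\bar v)$ into the needed $2\lambda^2$; and (ii) $S$ is invariant under the adjoint action associated with $\mathfrak{g}=\mathfrak{u}(m)\otimes_{\R}\C$, so Wilking's theorem \cite{Wil13} gives $\Rm^{\#}(v,\bar v)\ge 0$ --- this replaces the index-by-index algebraic lemma you flag as the ``main obstacle,'' which is neither proved in your proposal nor, in the form you state it, the statement the argument requires. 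Finally, because $S$ is noncompact, existence of a minimizer is not automatic: the paper first reduces to strictly positive bisectional curvature, splitting off factors via Cao's dimension reduction \cite{Cao04} when the Ricci curvature has a zero (with induction on $m$), and then invoking Mok's strong maximum principle \cite{Mok88} and Gu's holonomy argument \cite{Gu09} to pass from $BK\ge 0$ to $BK>0$ in the non-symmetric case; your proposal contains no analogue of this preliminary reduction. As it stands the analytic core of the proof is missing.
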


\begin{proof}
By taking the universal cover, we may assume $M$ is simply connected. We prove the conclusion by induction for the dimension $m$. If $m=1$, the conclusion trivially holds. We assume the conclusion is true for any dimension smaller than $m$.

If the Ricci curvature vanishes at some spacetime point, then it follows from \cite[Theorem $2.1$]{Cao04} that $(M^m,g(t))$ is isometrically biholomorphic to $\Sigma \times \C^{k}$ for some $k \ge 1$ such that $\Sigma$ has positive Ricci curvature. Notice that the boundedness of the curvature is not necessary for the proof of \cite[Theorem $2.1$]{Cao04}. Therefore, $(M^m,g(t))$ splits as two ancient solutions to the K\"ahler Ricci flow with $BK \ge 0$. Then the conclusion follows from the inductive assumption.

For this reason, we may assume the Ricci curvature is positive on the spacetime. From the strong maximum principle, see \cite[Proposition $1.1$]{Mok88}, we conclude that $(M^m,g(t))$ has positive holomorphic sectional curvature. If $(M^m,g(t))$ is symmetric, then the conclusion obviously holds. Otherwise, the holonomy group of $M$ is $\text{U}(m)$ and the proof of \cite[Theorem $1.2$]{Gu09}, which is based on the strong maximum principle in \cite{BS09}, shows that $(M^m,g(t))$ has $BK>0$.

If we regard the curvature operator $\Rm$ as a symmetric operator on $2$-forms $\Lambda^2=\Lambda^2 M$, then $BK > 0$ is equivalent to 
\begin{align}
\Rm(v,\bar v) > 0 \label{E301a}
\end{align}
for any $v=Z \wedge \bar W \in \Lambda^{1,1}$ with $v \ne 0$, where $Z,W \in \Lambda^{1,0}$. 

Next, define a function $\lambda$ on $M \times (-\infty,0]$ as
\begin{align*}
\lambda(x,t)=\inf_{v \in S(x,t)}{\Rm(x,t)(v,\bar v)}, 
\end{align*} 
where
\begin{align}
S(x,t)=\left\{ v \, \mid \, v=Z \wedge \bar W,\,\, Z,W \in \Lambda^1_x \, \text{with}\, \,|P(Z)|_t= |P(W)|_t=1 \right\}. \label{E301bx}
\end{align} 
Here $P: \Lambda^1 \to \Lambda^{0,1}$ is the projection and the norm $|\cdot|_t$ comes from the Hermitian inner product on $ \Lambda^1$ induced by $g(t)$.

At a spacetime point $(x_0,t_0) \in M \times (-\infty,0]$ such that $\lambda(x_0,t_0) \le 0$, we claim that a minimizer $v$ for $\lambda$ exists. To prove this, we set $v_i=Z_i \wedge \bar W_i$ to be a minimizing sequence such that
\begin{align*}
\lim_{i \to \infty} \Rm(x_0,t_0) (v_i,\bar v_i) =\lambda(x_0,t_0).
\end{align*} 
Now we set $Z_i=a_i+\bar b_i$ and $W_i=c_i+\bar d_i$, where $a_i,b_i,c_i,d_i \in \Lambda_{x_0}^{1,0}$. From our definition, we have $|b_i|=|d_i|=1$. By taking a subsequence, we may assume
\begin{align*}
\lim_{i \to \infty} b_i=b \quad \text{and} \quad \lim_{i \to \infty} d_i=d 
\end{align*}
where $b,d \in \Lambda_{x_0}^{1,0}$ and $|b|=|d|=1$. From a direct calculation,
\begin{align}
\Rm(Z_i,\bar W_i,\bar Z_i,W_i)= \Rm(a_i,\bar c_i, \bar a_i, c_i)+2 \mathfrak{Re}(\Rm(a_i,\bar c_i,b_i,\bar d_i))+\Rm(b_i,\bar d_i,\bar b_i,d_i). \label{E301ac}
\end{align}
By taking a subsequence if necessary, there are three possible cases.

\emph{Case} 1: $\lim_{i \to \infty} |a_i| |c_i|=0$.
In this case, it is easy to see from \eqref{E301a} and \eqref{E301ac} that
\begin{align*}
\lambda=\lim_{i \to \infty} \Rm(Z_i,\bar W_i,\bar Z_i,W_i) =\Rm(b,\bar d,\bar b,d) >0,
\end{align*} 
which contradicts our assumption that $\lambda(x_0,t_0) \le 0$.

\emph{Case} 2: $\lim_{i \to \infty} |a_i| |c_i|=+\infty$. In this case, we rewrite the right-hand side of \eqref{E301ac} as
\begin{align*}
\Rm(Z_i,\bar W_i,\bar Z_i,W_i)= (|a_i||c_i|)^2\Rm(a'_i,\bar c'_i, \bar a'_i, c'_i)+2|a_i||c_i| \mathfrak{Re}(\Rm(a'_i,\bar c'_i,b_i,\bar d_i))+\Rm(b_i,\bar d_i,\bar b_i,d_i),
\end{align*}
where $a_i'=a_i/|a_i|$ and $c_i'=c_i/|c_i|$. Therefore, it follows immediately from \eqref{E301a} that
\begin{align*}
\lambda=\lim_{i \to \infty} \Rm(Z_i,\bar W_i,\bar Z_i,W_i) =+\infty,
\end{align*}
which is impossible.

\emph{Case} 3: $\lim_{i \to \infty} |a_i| |c_i|=\tau \in (0,\infty)$. In this case, we assume 
\begin{align*}
\lim_{i \to \infty} \frac{a_i}{|a_i|}=a \quad \text{and} \quad \lim_{i \to \infty} c_i|a_i|=c
\end{align*}
where $a,c \in \Lambda_{x_0}^{1,0}$ with $|a|=1$ and $|c|=\tau$. Then it is clear from \eqref{E301ac} that
\begin{align*}
\lambda=&\lim_{i \to \infty} \Rm(Z_i,\bar W_i,\bar Z_i,W_i) \\
=&\Rm(a,\bar c, \bar a, c)+2 \mathfrak{Re}(\Rm(a,\bar c,b,\bar d))+\Rm(b,\bar d,\bar b,d) = \Rm(Z,\bar W,\bar Z,W)
\end{align*} 
where $Z:=a+\bar b$ and $W=c+\bar d$. In other words, $v:=Z \wedge \bar W \in S(x_0,t_0)$ is a minimizer of $\lambda$. In particular, $\lambda >-\infty$.

Now we define $u=Z \wedge d$, $w=Z \wedge \bar c$ and $v(s)=u+sw$ for any $s \in \R$. It is clear that $v(s) \in S$ and $v=u+w=v(1)$. Since $\Rm(v(s),\bar v(s))$ attains the minimum at $s=1$, by taking the derivative, we have
\begin{align*}
\mathfrak{Re}(\Rm(u,\bar w))+\Rm(w,\bar w)=0 
\end{align*}
and hence
\begin{align}
\lambda=\Rm(u,\bar u)+2\mathfrak{Re}( \Rm(u,\bar w))+\Rm(w,\bar w)= \mathfrak{Re}(\Rm(v,\bar u)). \label{E301ca}
\end{align}
From \eqref{E301ca}, we obtain
\begin{align}
|\lambda| \le |\Rm(v,\bar u)|=|\Rm(Z,\bar W,\bar Z,\bar d)|=|\Rm(Z,\bar W, b,\bar d)| \le |\Rm(v)|, \label{E301cb}
\end{align}
where we have used the fact that $|b|=|d|=1$. 

Next, by applying Uhlenbeck's trick, the Riemannian curvature under the Ricci flow is deformed by
\begin{align}
\tl \Rm=\Rm^2+\Rm^{\#}, \label{E301cbx}
\end{align}
see \cite[Lemma 2.58]{CLN06}. For each point $(x_0,t_0)$ such that $\lambda(x_0,t_0) \le 0$, we choose a minimizer $v(x_0,t_0) \in S(x_0,t_0)$ in the definition of $\lambda$. After one applies Uhlenbeck's trick by using a 1-parameter family of bundle isomorphism, the pullback of $g(t)$ is independent of $t$. Therefore, $v(x_0,t_0) \in S(x_0,t)$ from the definition \eqref{E301bx}. Extending $v$ by parallel transport, one obtains a $2$-form $v$ locally defined on a spacetime neighborhood of $(x_0,t_0)$ such that $v(x,t) \in S(x,t)$.

From the evolution equation of $\Rm$ \eqref{E301cbx} and \eqref{E301cb}, we conclude that in the barrier sense,
\begin{align}
\tl \lambda \ge |\Rm(v)|^2+\Rm^{\#}(v,\bar v) \ge \lambda^2+\Rm^{\#}(v,\bar v)
\label{E301cd}
\end{align}
at $(x_0,t_0)$. Now it follows from the same proof of \cite[Theorem $1$]{Wil13} that $\Rm^{\#}(v,\bar v) \ge 0$. Indeed, if we consider the Lie algebra $\mathfrak{g}=\mathfrak{u}(m) \otimes _{\R} \C$, then the set $S(x_0,t_0)$ in \eqref{E301bx} is preserved by the adjoint representation of the Lie group with Lie algebra $\mathfrak{g}$. Therefore, the same proof of \cite[Theorem $1$]{Wil13} yields the claim. 

Therefore, \eqref{E301cd} implies that
\begin{align*}
\tl \lambda^- \ge (\lambda^-)^2
\end{align*}
in the barrier sense on $M \times (-\infty,0]$, where $\lambda^-:=\min\{\lambda,0\}$. Now, it follows from \cite[Corollary $2.4$]{CL20} that $\lambda \ge 0$ on $M \times (-\infty,0]$.

From the definition of $\lambda$, we conclude that $(M^m,g(t))$ has weakly PIC$_2$. Indeed, from Definition \ref{def:flow1} (iii) we only need to prove 
\begin{align}
\Rm(Z,\bar W,\bar Z,W) \ge 0 \label{E301cc}
\end{align}
for any $Z,W \in \Lambda^1$. We set $Z=a+\bar b$ and $W=c+\bar d$ as before, where $a,b,c,d \in \Lambda^{1,0}$. If $|b||d| \ne 0$, then
\begin{align*}
\Rm(Z,\bar W,\bar Z,W) =(|b||d|)^2\Rm(Z',\bar W',\bar Z',W') \ge 0
\end{align*}
where $Z'=Z/|b|$ and $W'=W/|d|$. If $b=0$, then
\begin{align*}
\Rm(Z,\bar W,\bar Z,W) =\Rm(a,\bar c,\bar a,c) \ge 0
\end{align*}
since $BK \ge 0$. Similarly, \eqref{E301cc} also holds for the case $d=0$. 

In sum, the proof is complete.
\end{proof}

Now, we can classify all $\kappa$-solutions.

With the help of Theorem \ref{thm:main2}, Lemma \ref{lem:type1} and Theorem \ref{thm:pic2}, the classification of all $\kappa$-solutions is immediate.

\begin{thm} \label{thm:ka}
Let $(M^m,g(t))_{t \in (-\infty,0]}$ be a $\kappa$-solution to the K\"ahler Ricci flow. Then $(M^m,g(t))$ is isometrically biholomorphic to $N^k \times \C^{m-k}$, where $N$ is a compact Hermitian symmetric space.
\end{thm}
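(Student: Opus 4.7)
The plan is to combine the results already established in this section with the general Type-I classification (Theorem \ref{thm:main2}) in order to reduce the statement to a purely Riemannian assertion, and then to upgrade the resulting isometric splitting to a biholomorphic one by applying the K\"ahler de Rham decomposition on the universal cover.

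First I would verify that a $\kappa$-solution $(M^m,g(t))$ to the K\"ahler Ricci flow satisfies the hypotheses of Theorem \ref{thm:main2}: Lemma \ref{lem:type1} gives the Type-I bound on the curvature, and Theorem \ref{thm:pic2} promotes $BK \ge 0$ to weakly PIC$_2$. Applying Theorem \ref{thm:main2} produces a Riemannian isometry
\begin{align*}
(M,g(t)) \cong (\widetilde N^{k'} \times \R^{n-k'})/\Gamma,
\end{align*}
where $\widetilde N$ is a simply-connected, compact symmetric space of real dimension $k'$ and $\Gamma$ is a finite group acting freely by isometries.

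Next I would pass to the universal cover $\tilde M = \widetilde N^{k'} \times \R^{n-k'}$, equipped with the complex structure $\tilde J$ lifted from $M$. Because $\tilde M$ is a simply-connected K\"ahler manifold, its restricted holonomy lies in $\text{U}(m)$, and so every parallel distribution arising in the Riemannian de Rham decomposition is automatically $\tilde J$-invariant. Hence the Riemannian product $\widetilde N^{k'} \times \R^{n-k'}$ is simultaneously a product of K\"ahler manifolds: the flat factor has even real dimension and is biholomorphically isometric to $\C^{m-k}$ with $k := k'/2$, while $\widetilde N^{k'}$ becomes a simply-connected, compact K\"ahler symmetric space -- equivalently, a simply-connected, compact Hermitian symmetric space $N$ of complex dimension $k$. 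This furnishes a biholomorphic isometry $\tilde M \cong N \times \C^{m-k}$.

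Finally I would descend to the quotient. The deck group $\Gamma$ acts on $\tilde M$ by isometries, but because the covering map $\tilde M \to M$ is a local biholomorphism (the complex structure on $M$ lifts uniquely to $\tilde M$), each element of $\Gamma$ commutes with $\tilde J$ and is therefore a biholomorphic isometry. The biholomorphic isometry $\tilde M \cong N \times \C^{m-k}$ then descends to exhibit $M$ as a finite quotient of $N \times \C^{m-k}$ in the isometrically biholomorphic sense. The only step requiring genuine care -- rather than being a serious obstacle -- is matching the Riemannian factorization produced by Theorem \ref{thm:main2} with the K\"ahler de Rham splitting of $\tilde M$; once one confirms that the Euclidean summand is a complex subspace (equivalently, that the holonomy splits as a unitary representation), the complex refinement of the decomposition and the descent to $M$ are formal.
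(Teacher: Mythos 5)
Your proposal is correct, but it follows a genuinely different route from the paper's proof. The paper first reduces to the simply-connected case via Lemma \ref{lem:quo} and argues by induction on $m$: the compact case is quoted from \cite[Proposition 2.8]{DZ20b}; in the noncompact case, Theorem \ref{thm:pic2} combined with Ni--Tam's splitting theorem \cite[Theorem 5.3]{NT03} gives directly an isometric biholomorphism $M \cong N \times L$ with $N$ compact Hermitian symmetric and $L$ diffeomorphic to $\R^{2k}$; the case $k<m$ is closed by induction, and Theorem \ref{thm:main2} (via Lemma \ref{lem:type1}) enters only in the remaining case $k=m$, where $M$ diffeomorphic to $\R^{2m}$ forces flatness. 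You instead apply Theorem \ref{thm:main2} to the whole solution at once and then upgrade the resulting Riemannian splitting to a holomorphic one on the universal cover. That upgrade does work, but your phrase ``every parallel distribution arising in the de Rham decomposition is automatically $\tilde J$-invariant'' should be argued as follows: $\tilde J$ is parallel, hence commutes with the holonomy representation, so it preserves the trivial isotypic summand (the Euclidean directions) and its orthogonal complement (indeed each irreducible factor, since these are pairwise non-isomorphic holonomy modules); this is precisely the K\"ahler de Rham decomposition, which the paper itself invokes elsewhere (\cite[Chapter XI, Theorem 8.1]{KoNo63}), and the compact symmetric factor with its parallel compatible complex structure is Hermitian symmetric because geodesic symmetries preserve any parallel tensor. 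The descent step is fine since deck transformations preserve the lifted complex structure. Your route buys a shorter, more self-contained argument (no induction, no appeal to \cite{DZ20b} or \cite{NT03}), while the paper's route obtains the holomorphic splitting for free from Ni--Tam and uses Theorem \ref{thm:main2} only to exclude a nonflat metric on $\R^{2m}$.
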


\begin{proof}
By Lemma \ref{lem:type1}, Theorem \ref{thm:pic2} and Theorem \ref{thm:main2}, $(M^m,g(t))$ is isometrically biholomorphic to $(N^k \times \C^{m-k})/\Gamma$, where $N$ is a compact Hermitian symmetric space and $\Gamma$, acting freely on $N^k \times \C^{m-k}$, is a finite subgroup of the holomorphic isometry group of $N^k \times \C^{m-k}$.

To complete the proof, we need to show that $\Gamma$ is trivial. For any $\sigma \in \Gamma$, write $\sigma=(\sigma_1, \sigma_2) \in \mathrm{Iso}(N^k) \times \mathrm{Iso}(\C^{m-k})$, using the uniqueness of de Rham's decomposition theorem (see \cite[Chapter IX, Theorem $8.1$]{KoNo63}), where $\mathrm{Iso}(\cdot)$ denotes the holomorphic isometry group. Since $\sigma$ has finite order, the center of mass construction ensures the existence of $q \in \C^{m-k}$ fixed by $\sigma_2$. Consequently, $\sigma_1$ acts freely on $N^k$.  

Next, we decompose $N^k=N_1 \times \cdots \times N_l$, where each $N_i$ is a compact Hermitian symmetric space equipped with a K\"ahler-Einstein metric with Einstein constant $\lambda_i>0$, and $\lambda_i \ne \lambda_j$ for any $1 \le i <j \le l$. This decomposition is valid because $N^k$ can first be expressed as the product of irreducible compact Hermitian symmetric spaces, each admitting a positive K\"ahler-Einstein metric. Then, we group them by their distinct Einstein constants.

By the uniqueness of de Rham's decomposition theorem, we write $\sigma_1=(\sigma^1, \cdots, \sigma^l) \in \mathrm{Iso}(N_1) \times \cdots \times \mathrm{Iso}(N_l)$. If $\sigma_1$ is nontrivial, assume without loss of generality that a nontrivial component $\sigma^1$ acts freely on $N_1$. This would imply that $N_1/\la \sigma^1 \ra$ admits a positive K\"ahler-Einstein metric. However, this contradicts Kobayashi's theorem \cite{Ko61}, which asserts that any positive K\"ahler-Einstein manifold must be simply connected.

Thus, $\sigma_1$ must be trivial, and consequently, $\sigma$ is trivial as well. This completes the proof.
\end{proof}

\subsection*{3.2\quad$BK \ge 0$ implies bounded curvature}

In this subsection, we aim to prove that any complete, $\kappa$-noncollapsed, ancient solution to the K\"ahler Ricci flow with $BK \ge 0$ must have uniformly bounded curvature and hence is a $\kappa$-solution.

\begin{defn} 
The moduli space $\mathcal N=\mathcal N(m,\kappa)$ consists of all noncompact, nonflat, $\kappa$-noncollapsed, K\"ahler manifolds in the form of $N^k \times \C^{m-k}$, where $N$ is a compact Hermitian symmetric space normalized such that its scalar curvature is identically $1$.
\end{defn}

It follows from de Rham's decomposition \cite[Chapter XI, Theorem $8.1$]{KoNo63} that for any $(N^k \times \C^{m-k} ,\bar g) \in \mathcal N(m,\kappa)$, we have 
\begin{align}
N=N^k=N_0 \times N_1 \times \cdots \times N_s,
\label{E302ab}
\end{align}
where each $N_i$ is an irreducible compact Hermitian symmetric space. From Lemma \ref{lem:finite}, there are finitely many holomorphic isometry classes in $\mathcal N(m,\kappa)$, up to scaling on each factor $N_i$. From the decomposition \eqref{E302ab}, we assume $N_0$ has the maximal scalar curvature. In particular, 
\begin{align} 
1\ge R_{N_0} \ge \frac{1}{m} \label{E303bb}
\end{align} 
and hence the diameter of $N_0$ is bounded above by $ m \pi$. For each $q \in N^k \times \C^{m-k}$, there exists a unique copy $N_0$, denoted by $N_{0,q}$, containing $q$. 

We set the projection from $N^k \times \C^{m-k}$ to $\C^{m-k}$ by $\pi$, fix a base point $\bar q \in N^k \times \C^{m-k}$ with $\bar x=\pi(\bar q)$, and consider the following open sets:
\begin{align} 
U':=\pi^{-1} \lc B_E(\bar x,\ep^{-1}/2) \rc, \quad V':=\pi^{-1} \lc B_E(\bar x,\ep^{-1}/4) \rc \quad \text{and} \quad W':=\pi^{-1} \lc B_E(\bar x,\ep^{-1}/10) \rc
\label{E303ba}
\end{align} 
for a small constant $\ep$ to be determined later. Here, $B_E$ denotes the ball in $\C^{m-k}$.

For any $q_1,q_2 \in V'$, we define $N_{0,q_1} \circeq N_{0,q_2}$ if either $N_{0,q_1}= N_{0,q_2}$ or there exists a smooth manifold $T$ embedded in $U'$ such that $T$ is diffeomorphic to $N_0 \times [0,1]$ and $\partial T=N_{0,q_1} \cup N_{0,q_2}$. Moreover, we say $N_{0,q_1} \equiv N_{0,q_2}$ if there exist $\{x_1,x_2,\cdots, x_s\} \subset V'$ such that $q_1=x_1$, $q_2=x_s$ and $N_{0,x_i} \circeq N_{0,x_{i+1}}$ for any $1\le i \le s-1$. It is clear from the definition that $\equiv$ is an equivalence relation.

\begin{lem}\label{lem:equi}
With the above assumptions, $N_{0,q_1} \equiv N_{0,q_2}$ for any $q_1,q_2 \in V'$.
\end{lem}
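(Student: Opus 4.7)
The plan is to join $q_1$ to $q_2$ by a path in $\hat V'$, cover the path by finitely many local product charts, and construct a $\circeq$-equivalence inside each chart, chaining to get $\equiv$.

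I would first check that $\hat V'$ is path-connected. The singular set $\mathcal S \subset \C^{m-k}/\Gamma_1$ is the image of finitely many complex linear subspaces of real codimension at least $2$, so $B_d(\bar x, \ep^{-1}/20) \cap \mathcal R$ is path-connected. Over this connected base $\pi$ is a genuine fiber bundle with connected fiber $N$, and by the Bonnet-Myers diameter bound of Remark \ref{rem:dia} together with the comparison $d_{\bar g}(\bar q, q) \le d(\bar x, \pi(q)) + \mathrm{diam}(N)$, the entire preimage $\pi^{-1}\lc B_d(\bar x,\ep^{-1}/20)\cap \mathcal R\rc$ lies in $B_{\bar g}(\bar q, \ep^{-1}/10)$ once $\ep$ is small, so it equals $\hat V'$ and is path-connected.

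Next I would prove the local version: if $q_1, q_2$ sit in a common local product chart $N \times B \subset \pi^{-1}(\mathcal R)$ of uniformly bounded diameter, then $N_{0,q_1} \circeq N_{0,q_2}$. Using the decomposition \eqref{E302ab}, write $q_i = (y_i, z_i, b_i)$ with $y_i \in N_0$, $z_i \in N_1 \times \cdots \times N_s$, $b_i \in B$, so $N_{0,q_i} = N_0 \times \{z_i\} \times \{b_i\}$. Pick any smooth embedded path $\gamma$ in $(N_1 \times \cdots \times N_s) \times B$ joining $(z_1, b_1)$ to $(z_2, b_2)$; then $T := N_0 \times \gamma([0,1])$ is diffeomorphic to $N_0 \times [0,1]$ with $\partial T = N_{0,q_1} \cup N_{0,q_2}$ and is embedded. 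Since the chart has diameter bounded by $\mathrm{diam}(N) + 2\,\mathrm{diam}(B) \le C(m,\kappa)$, the factor-of-ten gap between $V'$ and $U'$ in \eqref{E303ba} forces $T \subset U'$ when $\ep$ is small.

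Finally, I would chain these equivalences along a path $\sigma \subset \hat V'$ joining $q_1$ to $q_2$. Each $q \in \hat V'$ has $\mathrm{dist}(\pi(q), \mathcal S) > 0$ and therefore admits a local product chart of positive radius; compactness of $\sigma([0,1])$ yields a finite subcover, and choosing points $q_1 = p_0, p_1, \dots, p_s = q_2$ on $\sigma$ with each consecutive pair in a common chart, the local step gives $N_{0,p_i} \circeq N_{0,p_{i+1}}$ and hence $N_{0,q_1} \equiv N_{0,q_2}$. The main obstacle I anticipate is the containment $T \subset U'$ rather than merely $T$ being contained in its chart: as $\pi(q)$ approaches $\mathcal S$ the chart radius may shrink, so one must verify that every chart used along $\sigma$ still sits inside $U'$. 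This is resolved because only finitely many charts are needed, each is centered at a point of $V' \subset U'$, and each chart has diameter bounded by the uniform constant $C(m,\kappa)$, so the generous gap \eqref{E303ba} absorbs the displacement.
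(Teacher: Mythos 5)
Your tubes $T=N_0\times(\text{path})$ are the same basic construction as the paper's, but the quantitative backbone of your argument has a genuine gap: you repeatedly invoke a uniform bound on $\mathrm{diam}(N)$ for the \emph{whole} fiber, citing Remark \ref{rem:dia}. That remark concerns the shrinker moduli $\mathcal M(n,\kappa)$, where the soliton equation forces every factor of $N$ to have Einstein constant $1/2$. Lemma \ref{lem:equi}, however, lives in $\mathcal N(m,\kappa)$ (Definition \ref{def:modu2}), where the only normalization is $R\equiv 1$ and each factor in the decomposition \eqref{E302ab} carries its own scale; only $N_0$, the factor of maximal scalar curvature, has a uniform diameter bound via \eqref{E303bb}, while the factors $N_i$, $i\ge 1$, may have diameter comparable to or much larger than $\ep^{-1}$ (this is precisely why the paper notes, right after the lemma, that the conclusion extends to other factors only ``provided that the size of $N_i$ is much smaller than $\ep^{-1}$''). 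Consequently your first step collapses: the estimate $d_{\bar g}(\bar q,q)\le d(\bar x,\pi(q))+\mathrm{diam}(N)$ does not show that $\pi^{-1}\lc B_d(\bar x,\ep^{-1}/20)\cap\mathcal R\rc$ lies in $B_{\bar g}(\bar q,\ep^{-1}/10)$, so $\hat V'$ is not identified with the full preimage and its path-connectedness --- the hypothesis on which your entire compactness/chaining argument rests --- is unproved. The same false bound infects the containment $T\subset U'$: a local chart $N\times B$ does not have diameter $\le C(m,\kappa)$, and with ``any smooth embedded path $\gamma$'' in $N_0^c\times B$ the tube can wander a distance far exceeding $\ep^{-1}$ inside a large factor and leave $U'$ altogether.

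The repair is essentially the paper's route, which never needs diameter control on the whole fiber. One works only with $N_0$: for $q_1,q_2$ in the same fiber one joins the $N_0^c$-coordinates by a \emph{minimizing} geodesic, whose length is controlled by $d_{\bar g}(q_1,q_2)\le \ep^{-1}/5$ rather than by $\mathrm{diam}(N_0^c)$; for different fibers one joins the base points by a geodesic in $\mathcal R$ using the geodesic convexity of Lemma \ref{lem:convex}, lifts it horizontally from $q_1$, and then performs a single fiber move. Every resulting tube $N_0\times(\text{path})$ then stays within roughly $\ep^{-1}/5+m\pi$ of $\bar q$ and over $B_d(\bar x,\ep^{-1}/4)$, hence inside $U'$ by the slack built into \eqref{E303ba}; in particular no path-connectedness of $\hat V'$ and no covering of a path by many charts is needed, since two $\circeq$-steps suffice.
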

\begin{proof}
For any $q_1,q_2 \in V'$, we first assume that $q_1$ and $q_2$ belong to the same fiber, meaning $\pi(q_1)=\pi(q_2)=x$ for some $x \in B_E(\bar x,\ep^{-1}/4)$. Write $q_i=(a_i,b_i,x) \in N_0 \times N_0^c \times B_E(\bar x,\ep^{-1}/4)$ for $i=1,2$, where $N_0^c=N_1 \times \cdots \times N_s$. Considering a geodesic segment $\gamma$ contained in $N_0^c$ that connects $b_1$ to $b_2$, it is straightforward to see that $N_{0,q_1} \circeq N_{0,q_2}$.

Next, suppose $\pi(q_i)=x_i \in B_E(\bar x,\ep^{-1}/4)$ with $x_1 \ne x_2$. Choose a geodesic segment $\gamma_1(t)$ in $\C^{m-k}$ such that $\gamma_1(0)=x_1$ and $\gamma_1(1)=x_2$. Clearly, $\gamma_1$ is contained in $B_E(\bar x,\ep^{-1}/4)$. We can then lift $\gamma_1$ to a curve $\tilde \gamma_1$ in $U'$ such that $\tilde \gamma_1(0)=q_1$. Setting $\tilde \gamma_1(1)=q_0$, it follows that $N_{0,q_1} \circeq N_{0,q_0}$, and hence $N_{0,q_1} \equiv N_{0,q_2}$.
\end{proof}

\begin{defn} \label{def:close2}
Let $(M_i,g_i,J_i,x_i),\,i=1,2$ be two pointed K\"ahler manifolds. We say $(M_1,g_1,J_1,x_1)$ is said to be $\ep$-close to $(M_2,g_2,J_2,x_2)$ if there exist open neighborhoods $U_i$ such that $B_{g_i}(x_i,\ep^{-1}-\ep) \subset U_i \subset \bar U_i \subset B_{g_i}(x_i,\ep^{-1})$ for $i=1,2$. Moreover, there exists a diffeomorphism $\varphi:U_1 \to U_2$ such that $\varphi(x_1)=x_2$ and
\begin{align*} 
\sup_{ U_1}\lc |\varphi^* J_2-J_1|^2+ |\varphi^* g_2-g_1|^2+\sum_{i=1}^{[\ep^{-1}]} |\na^i_{g_1} (\varphi^*g_2)|^2\rc \le \ep^2.
\end{align*} 
\end{defn}

Suppose a K\"ahler manifold $(M,g,J,p) $ is $\ep$-close to $\lc N^k \times \C^{m-k}, \bar g, \bar J, \bar q \rc \in \mathcal N(m,\kappa)$. Morever, we denote their K\"ahler forms by $\omega$ and $\bar \omega$, respectively. From the diffeomorphism $\varphi$ in Definition \ref{def:close2}, we define from \eqref{E303ba}
\begin{align} 
U=\varphi^{-1}(U'), \quad V=\varphi^{-1}(V') \quad \text{and} \quad W=\varphi^{-1}(W')
\label{E304a}
\end{align} 

For any $p \in V$, there exists a submanifold $\Sigma_p$ defined as $\Sigma_p=\varphi^{-1}(N_{0,\varphi(p)})$. Now, we prove

\begin{lem}\label{lem:vol}
There exists a function $\delta_2=\delta_2(\ep,\kappa,m)>0$ with $\lim_{\ep \to 0} \delta_2(\ep,\kappa,m)=0$ such that for any $p \in V$,
\begin{align*} 
\left| \int_{\Sigma_{p}} \frac{\omega^L}{L!} -|N_0|_{\bar g} \right | \le \delta_2.
\end{align*} 
Here, $L$ is the dimension of $N_0$.
\end{lem}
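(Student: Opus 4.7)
The plan is to exploit the almost-isometry and almost-holomorphicity of $\varphi$ from Definition \ref{def:close2} to transfer the integral from $\Sigma_p$ to the complex submanifold $N_{0,\varphi(p)}$ in the model, where Wirtinger's identity finishes the job.

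First, from Definition \ref{def:close2} we have $|\varphi^{*}\bar g-g|_g\le \ep$ and $|\varphi^{*}\bar J-J|_g\le \ep$ on $U$. Writing $\omega=g(J\cdot,\cdot)$ and $\bar\omega=\bar g(\bar J\cdot,\cdot)$, the identity $\varphi^{*}\bar\omega=(\varphi^{*}\bar g)(\varphi^{*}\bar J\cdot,\cdot)$ yields the pointwise bound $|\varphi^{*}\bar\omega-\omega|_g\le C(m)\ep$ on $U$. Raising to the $L$-th power, and using the uniform pointwise bounds on $|\omega|_g$ and $|\varphi^{*}\bar\omega|_g$ for $\ep$ small, gives $|(\varphi^{*}\bar\omega)^{L}-\omega^{L}|_g\le C'(m)\ep$ on $U$.

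Second, the change of variables under $\varphi$ combined with Wirtinger's identity gives
\begin{align*}
\int_{\Sigma_{p}}\frac{(\varphi^{*}\bar\omega)^{L}}{L!}=\int_{N_{0,\varphi(p)}}\frac{\bar\omega^{L}}{L!}=|N_{0}|_{\bar g},
\end{align*}
the last equality because $N_{0,\varphi(p)}$ is a complex submanifold of complex dimension $L$ inside the K\"ahler manifold $((N^{k}\times \C^{m-k})/\Gamma,\bar g,\bar J)$ (it is a factor of the fiber $N$ in the local product structure), so $\bar\omega^{L}/L!$ restricted to it is the induced Riemannian volume form of $N_{0}$. Combining the two steps,
\begin{align*}
\left|\int_{\Sigma_{p}}\frac{\omega^{L}}{L!}-|N_{0}|_{\bar g}\right|\le C'(m)\ep\cdot\text{Vol}_{g}(\Sigma_{p}).
\end{align*}

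Finally, to bound $\text{Vol}_{g}(\Sigma_{p})$ uniformly, note that by \eqref{E303bb} we have $R_{N_{0}}\ge 1/m$, so Bonnet--Myers bounds the diameter and hence the volume of $N_{0}$ in terms of $m$ alone; the almost-isometry of $\varphi$ then gives the same bound (up to $O(\ep)$) for $\text{Vol}_{g}(\Sigma_{p})$. Setting $\delta_{2}(\ep,\kappa,m)=C''(m,\kappa)\ep\to 0$ completes the proof. The only place any care is needed is the pointwise comparison $|\varphi^{*}\bar\omega-\omega|\le C(m)\ep$ in step one; the remainder is immediate from Wirtinger and the bounded geometry of $N_{0}$.
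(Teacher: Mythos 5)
Your proof is correct and follows essentially the same route as the paper: the paper's own argument is exactly the Wirtinger-type identity $\int_{N_{0,q}}\bar\omega^{L}/L!=|N_0|_{\bar g}$ for the complex submanifold $N_{0,q}$, combined with the $\ep$-closeness of $(g,J)$ to $(\bar g,\bar J)$ via $\varphi$ and $\omega(\cdot,\cdot)=g(J\cdot,\cdot)$. You have merely spelled out the pointwise comparison of $\omega$ with $\varphi^{*}\bar\omega$ and the uniform volume bound on $N_0$ (which the paper records as \eqref{E304aa}), so no substantive difference.
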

\begin{proof}
It follows from a standard fact in K\"ahler geometry that for any $q \in V'$,
\begin{align*} 
\int_{N_{0,q}} \frac{\bar \omega^L}{L!}=|N_0|_{\bar g},
\end{align*} 
since $N_{0,q}$ is a complex submanifold. Now the conclusion follows immediately from Definition \ref{def:close2} and the fact that $\omega(\cdot,\cdot)=g(J\cdot, \cdot)$.
\end{proof}

It is clear from Lemma \ref{lem:equi} that $\Sigma_{p_1} \equiv \Sigma_{p_2}$ for any $p_1,p_2 \in V$. Moreover, we have

\begin{lem}\label{lem:vol}
With the above assumptions, for any $p_1,p_2 \in V$,
\begin{align*} 
\int_{\Sigma_{p_1}} \frac{\omega^L}{L!} = \int_{\Sigma_{p_2}} \frac{\omega^L}{L!}.
\end{align*} 
\end{lem}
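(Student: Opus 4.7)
The plan is to exploit the equivalence relation $\equiv$ established in Lemma \ref{lem:equi} together with Stokes' theorem applied to the closed form $\omega^L$. Since $(M,g,J)$ is K\"ahler, the K\"ahler form $\omega$ is $d$-closed, and therefore so is the $2L$-form $\omega^L$. The integral $\int_{\Sigma_p} \omega^L/L!$ is thus a cobordism invariant, which is exactly what is needed because $\Sigma_{p_1} \equiv \Sigma_{p_2}$ is precisely a cobordism-type relation by construction.

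First I would reduce to the elementary case $\Sigma_{p_1} \circeq \Sigma_{p_2}$. In that case, either the two submanifolds coincide (in which case there is nothing to prove), or else by the definition preceding Lemma \ref{lem:equi} there is a smooth submanifold $T' \subset U'$ diffeomorphic to $N_0 \times [0,1]$ with $\partial T' = N_{0,\varphi(p_1)} \cup N_{0,\varphi(p_2)}$. Pulling back by the diffeomorphism $\varphi$ from Definition \ref{def:close2}, I obtain a compact submanifold $T = \varphi^{-1}(T') \subset U \subset M$ with boundary $\partial T = \Sigma_{p_1} \cup (-\Sigma_{p_2})$, where the sign indicates the induced boundary orientation. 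The orientations are consistent because $\varphi$ is a diffeomorphism that is $\ep$-close to a holomorphic isometry, and each $N_{0,q}$ carries the complex orientation coming from its structure as a complex submanifold of $N^k \times \C^{m-k}$. Then Stokes' theorem yields
\begin{align*}
\int_{\Sigma_{p_1}} \frac{\omega^L}{L!} - \int_{\Sigma_{p_2}} \frac{\omega^L}{L!} = \int_{\partial T} \frac{\omega^L}{L!} = \int_T \frac{d(\omega^L)}{L!} = 0.
\end{align*}

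To finish, I would invoke the transitivity of $\equiv$. By Lemma \ref{lem:equi}, for arbitrary $p_1, p_2 \in \hat V$ there exists a chain $\varphi(p_1) = y_1, y_2, \ldots, y_s = \varphi(p_2)$ in $\hat V'$ such that $N_{0,y_i} \circeq N_{0,y_{i+1}}$ for each $i$. Setting $x_i = \varphi^{-1}(y_i)$ and telescoping the equalities $\int_{\Sigma_{x_i}} \omega^L/L! = \int_{\Sigma_{x_{i+1}}} \omega^L/L!$ obtained in the previous step yields the claim.

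The main technical point I expect to need care with is verifying the orientation bookkeeping in the Stokes calculation, especially in checking that the complex orientations on $N_{0,\varphi(p_1)}$ and $N_{0,\varphi(p_2)}$ do give rise to opposite induced orientations as components of $\partial T'$; this is where the product structure $N_0 \times [0,1]$ of the cobordism is essential. Apart from this orientation check, the argument is a standard closed-form/Stokes computation, and it goes through cleanly because $\omega$ is globally closed on $M$ regardless of the closeness bound $\ep$.
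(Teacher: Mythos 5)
Your proposal is correct and follows essentially the same route as the paper: reduce via the equivalence relation of Lemma \ref{lem:equi} to the case $\Sigma_{p_1} \circeq \Sigma_{p_2}$, take the connecting submanifold $T$ diffeomorphic to $N_0 \times [0,1]$, and apply Stokes' theorem using that $\omega$ (hence $\omega^L$) is closed. The extra attention you give to orientations is a fair point of care but does not change the argument, which is the one the paper gives.
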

\begin{proof}
We only need to prove the case when $\Sigma_{p_1} \circeq \Sigma_{p_2}$. From our definition, there exists an embedded submanifold $T \subset U$ such that $\partial T=\Sigma_{p_1} \cup \Sigma_{p_2}$ and $T$ is diffeomorphic to $\Sigma_{p_1} \times [0,1]$. From Stokes' theorem, and since $\omega$ is closed, we conclude 
\begin{align*} 
\int_{\Sigma_{p_2}} \omega^L-\int_{\Sigma_{p_1}} \omega^L=\int_{T} d\omega^L=0.
\end{align*} 
\end{proof}

Next, we have the following lemma similar to Lemma \ref{lem:close}. In the proof, we add the subscript $i$ to $U,V,W$, etc., to denote the corresponding sets in different charts.

\begin{lem}\label{lem: extend}
For any $m$ and $\kappa>0$, there exists a number $\ep_1=\ep_1(m,\kappa)>0$ satisfying the following property.

Let $(M,g,J)$ be a K\"ahler manifold. Suppose $(M,g,J,p_i) $ is $\ep$-close to $\lc N_i^{k_i} \times \C^{m-k_i}, \bar g_i, \bar J_i, \bar q_i \rc \in \mathcal N(m,\kappa)$ for $i=1,2$, such that $W_1 \cap W_2 \ne \emptyset$ and $\ep \le \ep_1$. Assume the decomposition of $N_i$ is given by
\begin{align*} 
N_i=N_{i,0} \times N_{i,1} \times \cdots \times N_{i,s_i}.
\end{align*}
Then, there exists $0 \le j \le s_2$ such that $N_{2,j}$,  after rescaling by a constant close to $1$, is isometric to $N_{1,0}$.
\end{lem}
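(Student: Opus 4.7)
The plan is to bridge the two charts via a point $p \in W_1 \cap W_2$. Let $\Sigma_p$ denote the submanifold through $p$ obtained by pulling back via $\varphi_1$ the copy of $N_{1,0}$ passing through $\varphi_1(p)$. By the $\epsilon$-closeness of chart 1 to its model, $\Sigma_p$ is an approximately holomorphic, approximately isometric copy of $N_{1,0}$ sitting inside $M$. Push $\Sigma_p$ into chart 2 via $\varphi_2 \circ \varphi_1^{-1}$, obtaining an almost-complex, compact submanifold in a neighborhood of $\varphi_2(p) \in (N_2^{k_2} \times \mathbb{C}^{m-k_2})/\Gamma_2$. Composing with the holomorphic projection $\pi_2$ onto the flat orbifold $\mathbb{C}^{m-k_2}/\Gamma_{1,2}$ gives an approximately holomorphic map from (approximately) a compact complex manifold into a complex Euclidean orbifold. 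Since any genuine holomorphic map from a compact complex manifold into $\mathbb{C}^n$ is constant, this composition is nearly constant, so $\varphi_2(\Sigma_p)$ lies in an $O(\epsilon)$-neighborhood of a single fiber of $\pi_2$. Passing to a nearby regular point, we may identify this fiber with a copy of the product $N_2 = N_{2,0} \times \cdots \times N_{2,s_2}$.

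Next compare tangent distributions at $\varphi_2(p)$. The tangent $T_p\Sigma_p$ agrees, up to $O(\epsilon)$, with the parallel irreducible subbundle arising from the de Rham factor $N_{1,0}$, and transports through the near-isometry $\varphi_2 \circ \varphi_1^{-1}$ to an approximately parallel, approximately irreducible complex subspace of the tangent space to the fiber $N_2$. Any genuinely parallel subbundle of $TN_2$ is, by uniqueness of the de Rham decomposition for a product of irreducibles, a direct sum of some of the subbundles $TN_{2,j}$; irreducibility combined with the fact that $N_{1,0}$ has strictly positive scalar curvature (so the matching cannot absorb any flat direction had we kept the ambient $\mathbb{C}^{m-k_2}$ factor in view) singles out a unique index $j \in \{0,1,\ldots,s_2\}$ whose tangent distribution approximately coincides with the image of $T_p\Sigma_p$.

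Finally, restrict $\varphi_2 \circ \varphi_1^{-1}$ to $\Sigma_p$ and compose with the normal projection onto the leaf of $N_{2,j}$ through $\varphi_2(p)$, exactly as in the proof of Lemma \ref{lem:close}. This yields an approximately holomorphic, approximately isometric diffeomorphism from $N_{1,0}$ onto $N_{2,j}$. Invoking the finite classification from Lemma \ref{lem:finite} of isometry classes in $\mathcal{N}(m,\kappa)$ up to rescaling on individual irreducible factors, together with the rigidity that a compact irreducible Hermitian symmetric space admits only a one-parameter family of invariant K\"ahler metrics, we conclude that for $\epsilon \le \epsilon_1(m,\kappa)$ small enough the factor $N_{2,j}$ is isometric to $N_{1,0}$ after a rescaling by a constant arbitrarily close to $1$. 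The main obstacle is upgrading the approximate matching of irreducible parallel distributions to an exact identification of a de Rham factor (in particular, ruling out diagonal-type embeddings across several $N_{2,j}$); this is cleanest to handle by a contradiction argument taking a sequence $\epsilon_i \to 0$, using Lemma \ref{lem:finite} to reduce to finitely many model pairs, and passing to a smooth limit where the matching becomes rigid and the rigidity of Hermitian symmetric spaces can be applied directly.
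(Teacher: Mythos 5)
Your overall strategy matches the paper's: both arguments bridge the two charts through a point of the overlap, exploit that the transition map $\psi=\varphi_2\circ\varphi_1^{-1}$ is an almost isometry, deduce that $N_{1,0}$ is almost isometric to some irreducible factor of $N_2$, and upgrade to an exact isometry after a rescaling close to $1$ via the finiteness of models (Lemma \ref{lem:finite}) and a limiting argument. The difference lies in the middle step: the paper picks the base point in $\hat V_{1,\delta_1}\cap\hat V_{2,\delta_1}$, restricts $\psi$ to the simply connected product chart $N_{1,0}\times X\times B_{g_E}(0,r_0)$, and lifts it to the universal cover $N_2\times \C^{m-k_2}$, where the injectivity and almost-isometry argument of Lemma \ref{lem:close} applies and the factor matching is read off from the lifted map; you instead stay in the quotient, push forward only the leaf $\Sigma_p$, use approximate holomorphicity (no nonconstant holomorphic maps from a compact complex manifold into $\C^{m-k_2}$) to show its projection to the orbifold base is nearly constant, and then match parallel distributions. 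That is a legitimate alternative for this step, and your worry about diagonal-type embeddings is in fact moot: since each $N_{2,j}$ is irreducible with nontrivial holonomy and the holonomy of a Riemannian product is the product group, every parallel subbundle of $T(N_2\times\C^{m-k_2})$ is a direct sum of some $TN_{2,j}$'s and a constant subspace of the flat factor, so an irreducible non-flat parallel summand is a single $TN_{2,j}$; your $\epsilon_i\to 0$ compactness argument, in which $\psi$ limits to a genuine local isometry preserving the local de Rham splitting, then closes the rigidity step exactly as the paper's appeal to the arguments of Lemmas \ref{lem:close} and \ref{lem:finite} does.

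Two points need tightening. First, you cannot take an arbitrary $p\in W_1\cap W_2$: the leaf $\Sigma_p$ exists only when $\varphi_1(p)$ lies over the regular part of the first chart's base orbifold, and identifying the nearby fiber in the second chart with an honest copy of $N_2$ (rather than a quotient) also requires working over the regular part; since you do not lift to $N_2\times\C^{m-k_2}$, you must invoke precisely the structure provided by the sets $\hat V_{i,\delta_1}$ and Lemmas \ref{lem:sin} and \ref{lem:chart2}, i.e.\ choose the base point in $\hat V_{1,\delta_1}\cap\hat V_{2,\delta_1}$ as the paper does. Second, the assertion that an approximately holomorphic map from the only approximately complex $\Sigma_p$ into the flat orbifold base is nearly constant is not automatic for a fixed small $\epsilon$; it has to be absorbed into the same contradiction/compactness argument you defer to the end, so that limit argument carries the real weight of the proof rather than serving as a final cleanup.
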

\begin{proof}
From the assumption, there exists a point $q \in W_1 \cap W_2$. If we set $\psi=\varphi_2 \circ \varphi^{-1}_1$, then by using the local coordinates around $q$, we have
\begin{align*} 
\psi: N_{1,0}\times N^c_{1,0} \times B_{E}(0, \ep^{-1}/10) \longrightarrow N_2 \times \C^{m-k_2}.
\end{align*}
By the same argument as in the proof of Lemma \ref{lem:close}, if $\ep$ is sufficiently small, the map $\psi$ is a smooth embedding, which is almost isometric. Therefore, there exists a factor $N_{2,j}$ in the decomposition of $N_2$ which is almost isometric to $N_{1,0}$.
\end{proof}

By reordering the decomposition, we assume $N_{2,j}=N_{2,0}$. As discussed above, for any $p \in V_i$, there exists a submanifold $\Sigma_{i,p}$ containing $p$, which is a copy of $N_{i,0}$ for $i=1,2$. From Lemma \ref{lem: extend}, we know that $\Sigma_{1,q}$ is a small pertubation of $\Sigma_{2,q}$. Since $q \in W_1 \cap W_2$, one can find a point $q_1$ close to $q$ such that $\Sigma_{1,q_1} \circeq \Sigma_{2,q}$, in which the connecting manifold $T$ is easily constructed by the normal coordinates of $\Sigma_{2,q}$. Therefore, by the same proof of Lemma \ref{lem:vol}, we have the following result.
\begin{lem}\label{lem:vol2}
With the above assumptions, for any $p_i \in V_i$ for $i=1,2$, we have
\begin{align*} 
\int_{\Sigma_{1,p_1}} \frac{\omega^L}{L!} = \int_{\Sigma_{2,p_2}} \frac{\omega^L}{L!} .
\end{align*} 
\end{lem}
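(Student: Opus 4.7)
The plan is to use Lemma \ref{lem:vol} to reduce the equality across the two charts to a single pair of fibers, and then to close the loop by a Stokes' theorem argument applied to an explicit cobordism built from the normal bundle of one fiber.

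First, Lemma \ref{lem:vol} applied in each chart separately shows that $\int_{\Sigma_{i,p}} \omega^L/L!$ is independent of the choice of $p \in \hat V_i$, for each $i \in \{1,2\}$. Hence it suffices to exhibit one pair $(q_1,q_2) \in \hat V_1 \times \hat V_2$ for which $\int_{\Sigma_{1,q_1}} \omega^L/L! = \int_{\Sigma_{2,q_2}} \omega^L/L!$.

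Second, since $W_1 \cap W_2 \neq \emptyset$ and since Lemma \ref{lem:sin} guarantees that $\mathcal R_{\delta_1}$ occupies density strictly greater than $99/100$ in each chart, I pick a point $q \in \hat V_{1,\delta_1} \cap \hat V_{2,\delta_1}$ and set $q_2 := q$. After the reordering provided by Lemma \ref{lem: extend}, the factor $N_{2,0}$ is almost-isometric to $N_{1,0}$, and the diffeomorphism $\tilde\psi = \varphi_2 \circ \varphi_1^{-1}$ is $\delta(\ep)$-close to a local isometry between these factors in $C^{[\ep^{-1}]}$-norm, with $\delta(\ep) \to 0$ as $\ep \to 0$. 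Working in a tubular neighborhood of $\Sigma_{2,q}$ parametrized by the exponential map on its normal bundle, the foliation of $U_2$ by copies of $N_{2,0}$ near $\Sigma_{2,q}$ is $\delta(\ep)$-close to the foliation of $U_1$ by copies of $N_{1,0}$. Therefore, by perturbing $q$ slightly in the normal direction, a point $q_1 \in \hat V_1$ can be chosen so that $\Sigma_{1,q_1}$ lies in this tubular neighborhood and is expressed as a normal graph over $\Sigma_{2,q}$. The linear homotopy in normal coordinates then produces an embedded submanifold $T \cong \Sigma_{2,q} \times [0,1]$ contained in $U_1 \cap U_2$ with $\partial T = \Sigma_{1,q_1} \cup \Sigma_{2,q}$.

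Third, since $\omega$ is closed, so is $\omega^L$, and Stokes' theorem gives
\begin{align*}
\int_{\Sigma_{1,q_1}} \frac{\omega^L}{L!} - \int_{\Sigma_{2,q}} \frac{\omega^L}{L!} = \int_T d\!\left(\frac{\omega^L}{L!}\right) = 0,
\end{align*}
exactly as in the proof of Lemma \ref{lem:vol}. Combined with the chart-wise invariance from the first step, this yields the desired equality for all $p_i \in \hat V_i$. The main technical point to watch is that the cobordism $T$ is genuinely embedded and entirely contained in $U_1 \cap U_2$; this requires $\Sigma_{1,q_1}$ to sit inside the tubular neighborhood of $\Sigma_{2,q}$ as a graph of small normal height, which is ensured by taking $\ep$ sufficiently small relative to $r_0(m,\kappa)$ from Lemma \ref{lem:chart2} and the injectivity radius of $N_{2,0}$.
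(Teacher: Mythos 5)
Your proposal is correct and follows essentially the same route as the paper: reduce via the chart-wise constancy of Lemma \ref{lem:vol} to a single pair of fibers, pick a common point $q \in \hat V_{1,\delta_1} \cap \hat V_{2,\delta_1}$, use Lemma \ref{lem: extend} to realize $\Sigma_{1,q_1}$ as a small normal perturbation of $\Sigma_{2,q}$, build the cobordism $T$ in normal coordinates, and conclude by Stokes' theorem since $\omega^L$ is closed. No gaps to flag.
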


Now we prove the following proposition.

\begin{prop} \label{prop:cano}
For any $m$ and $\kappa>0$, there exists a number $\ep_2=\ep_2(m,\kappa)>0$ satisfying the following property.

Suppose $(M^m,g,J,p_0)$ is a complete K\"ahler manifold such that $R(p_0)=1$ and for any $p \in M$ with $R(p) \ge 10$, $(M^m,g_p,J,p)$ is $\ep$-close to an element in $\mathcal N(m,\kappa)$ and $\ep \le \ep_2$, where $g_p=R(p)g$. Then for all $z \in M$,
\begin{align*} 
R(z) \le 20
\end{align*} 
\end{prop}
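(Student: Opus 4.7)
My plan is to argue by contradiction: suppose $R(z) > 20$ for some $z \in M$. Let $A$ denote the connected component of $\{p \in M : R(p) > 10\}$ containing $z$. Since $R$ is continuous and $R(p_0) = 1 < 10$, the set $A$ is a proper open subset of $M$, so $\partial A$ is nonempty and $R \equiv 10$ on $\partial A$. Fix a sequence $y_j \in A$ with $y_j \to y \in \partial A$, so $R(y_j) \to 10$.

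At each $p \in A$ a chart $\varphi_p$ exhibits the $\ep$-closeness of $(M,g_p,J,p)$ to some $\bar M_p \in \mathcal N(m,\kappa)$, whose compact part has de Rham decomposition $N_0^{(p)} \times \cdots \times N_{s(p)}^{(p)}$ into simply-connected irreducible Hermitian symmetric factors. The argument of Lemma \ref{lem: extend}, extended from the maximal factor to every factor by uniqueness of the de Rham decomposition for $\ep$-close K\"ahler manifolds, shows that overlapping charts have matching decompositions (equal number of factors, matching shape classes, almost-isometric up to small rescalings). By path-connectedness of $A$, the combinatorial data $s, L_0, \dots, L_s$ and the shape classes $N_0, \dots, N_s$ are constant on $A$. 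Setting $\Sigma_{i,p} := \varphi_p^{-1}(N_i^{(p)}\text{-leaf through }\varphi_p(p))$ and $V_i(p) := \int_{\Sigma_{i,p}} \omega^{L_i}/L_i!$, the Stokes-theorem argument of Lemma \ref{lem:vol2} applies factor by factor (each requires only a connecting complex submanifold, which exists for every factor), giving $V_i(p_1) = V_i(p_2)$ for overlapping charts. Chaining along $A$ shows each $V_i$ is \emph{exactly} constant on $A$, with value $V_{i,0}$.

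Let $r_i(p) \in (0,1]$ denote the scalar curvature of the $N_i$-factor of $\bar g_p$. The normalization $R_{\bar g_p} \equiv 1$ and the vanishing of the Euclidean contribution give $\sum_{i=0}^s r_i(p) = 1$. In its canonical Einstein-$1/2$ metric $N_i$ has scalar curvature $L_i$, so rescaling by $L_i/r_i(p)$ produces scalar curvature $r_i(p)$ and changes the volume to $(L_i/r_i(p))^{L_i}|N_i|_0$, where $|N_i|_0$ is the canonical volume (bounded below by a positive constant $c(m,\kappa)$ since only finitely many shape classes appear in $\mathcal N(m,\kappa)$ by Lemma \ref{lem:finite}). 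Applying Lemma \ref{lem:vol} to the $N_i$-fiber in $\bar g_p$ and unwinding $g_p = R(p)g$ yields
\begin{align*}
V_{i,0} \;=\; R(p)^{-L_i}\Bigl(\bigl(L_i/r_i(p)\bigr)^{L_i}|N_i|_0 + O(\delta_2)\Bigr),
\qquad\text{whence}\qquad
R(p)\,r_i(p) \;=\; \kappa_i\bigl(1 + O(\delta_2)\bigr),
\end{align*}
with $\kappa_i := L_i(|N_i|_0/V_{i,0})^{1/L_i}$. Summing over $i$ and using $\sum_i r_i(p) = 1$ gives $R(p) = K\bigl(1+O(\delta_2)\bigr)$ on $A$, where $K := \sum_{i=0}^s \kappa_i$. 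Taking $p = y_j$ and $j \to \infty$ forces $K = 10 + O(\delta_2)$, so that $R(z) = K(1+O(\delta_2)) = 10 + O(\delta_2)$; choosing $\ep_2$ small enough that the error is below $1$ contradicts $R(z) > 20$.

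The main obstacle I anticipate is making the $O(\delta_2)$ error uniform in $p \in A$: the implicit constant in $R(p)r_i(p) = \kappa_i(1+O(\delta_2))$ must be independent of $p$, and this requires a uniform positive lower bound on each $V_{i,0}$. Such a bound is furnished by the boundary itself, since $V_{i,0} = V_i(y_j) \ge R(y_j)^{-L_i} L_i^{L_i}|N_i|_0 - O(\delta_2)$ (using $r_i \le 1$), uniformly positive in terms of $m,\kappa$. A secondary concern — that the small-rescaling ambiguity in the factor identification of Lemma \ref{lem: extend} might accumulate across long chains of charts covering $A$ — is neutralized by the \emph{exact} (not approximate) equality of $V_i$ furnished by Lemma \ref{lem:vol2}, so the number of charts in the chain plays no role.
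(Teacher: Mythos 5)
Your overall strategy---chaining $\ep$-close charts through the high-curvature region, using Stokes' theorem to get \emph{exact} conservation of fiber volumes, and playing that against the per-chart comparison between fiber volume and scalar curvature---is the same as the paper's. The genuine gap is in the step where you pass from one factor to \emph{every} de Rham factor. The models in $\mathcal N(m,\kappa)$ are normalized to scalar curvature $1$, so a factor $N_i$ with small $r_i(p)$ has diameter comparable to $\sqrt{L_i/r_i(p)}$, which can be far larger than the chart scale $\ep^{-1}$. Inside $B_{g_p}(p,\ep^{-1})$ such a factor is indistinguishable from a Euclidean factor: the number of factors and their shape classes need not agree between overlapping (or even identical) chart data, so ``the combinatorial data is constant on $A$'' is unjustified and in general false; the leaf $\Sigma_{i,p}$ is not contained in the chart, so $V_i(p)$ is not defined or controlled through the chart; the volume comparison of Lemma \ref{lem:vol} and the connecting manifold $T\subset U$ required for the Stokes argument of Lemma \ref{lem:vol2} are unavailable; and hence the key relation $R(p)\,r_i(p)=\kappa_i\bigl(1+O(\delta_2)\bigr)$ cannot be derived for those factors. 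This is exactly the proviso the paper flags right after Lemma \ref{lem:equi} (``provided that the size of $N_i$ is much smaller than $\ep^{-1}$''), and it is why Lemma \ref{lem: extend} matches only the single factor $N_{1,0}$ of maximal scalar curvature, for which \eqref{E303bb} gives $R_{N_0}\ge 1/m$ and diameter at most $m\pi$. Without control of the low-curvature factors, the normalization $\sum_i r_i(p)=1$ buys you nothing: the uncontrolled factors can absorb a point-dependent share of the total scalar curvature, so the conclusion $R(p)=K\bigl(1+O(\delta_2)\bigr)$ on $A$ does not follow, and with it the whole endgame (evaluating $K$ at the boundary) collapses. Your proposed lower bound on $V_{i,0}$ ``using $r_i\le 1$'' presupposes the same per-factor volume relation and suffers from the same problem.

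By contrast, the paper tracks only the maximal factor $N_0$ along a finite chain of charts joining a point with $R=20$ to a point with $R=10$ (produced from a geodesic to $p_0$ and the intermediate value theorem; your connected-component set-up is an inessential variation). The exact conservation of the single integral $A=\int_{\Sigma_{i,p}}\omega^L/L!$, combined with the per-chart estimate \eqref{E304a} at the two endpoints and the uniform bounds \eqref{E304aa}, yields $(20^L-10^L)A\le 2\delta$ while $10^L A\ge c_1-\delta$, a contradiction; no matching of the full decomposition and no use of $\sum_i r_i=1$ is needed. If you want to keep your ``sum over all factors'' device (which, if it worked, would give the stronger statement that $R$ is nearly constant on the high-curvature region), you would at least need a visibility threshold---factors with $r_i\lesssim\ep^2$ have diameter $\gtrsim\ep^{-1}$ but contribute only $O(m\ep^2)$ to the sum, while factors above a fixed threshold can genuinely be matched and integrated---together with an argument handling factors that drift across the threshold along $A$; as written, that work is missing.
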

\begin{proof}
In the proof, $\delta=\delta(\ep)$ represents a positive function such that $\lim_{\ep \to 0} \delta (\ep)=0$ and $\delta$ may be different line by line.

We prove the conclusion by contradiction. Suppose there exists $q_0 \in M$ with $R(q_0) >20$. We consider a geodesic segment $\gamma(t)$ with $\gamma(0)=q_0$ and $\gamma(1)=p_0$. From the continuity, there exists an interval $[a,b] \subset [0,1]$ such that $R(\gamma(a))=20$, $R(\gamma(b))=10$ and $R(\gamma(t)) \in [10,20]$ for any $t \in [a,b]$.

By our assumption, for any $t \in [a,b]$, $(M^m,g_{\gamma(t)},J,\gamma(t))$ is $\ep$-close to an element in $\mathcal N(m,\kappa)$ and hence the corresponding sets $U,V, V$ and $W$ are defined. From the compactness, there exists a sequence $\{t_i\}_{i=1}^K \subset [a,b]$ with $p_i=\gamma(t_i)$ satisfying the following statements.
\begin{enumerate}[label=(\roman*)]
\item $p_1=\gamma(a)$ and $p_K=\gamma(b)$. 

\item $(M,g_{p_i},J,p_i) $ is $\ep$-close to $\lc N_i^{k_i} \times \C^{m-k_i}, \bar g_i, \bar J_i, \bar q_i \rc \in \mathcal N(m,\kappa)$ for all $1 \le i \le K$.

\item $W_i \cap W_{i+1} \ne \emptyset$ for any $1 \le i \le K-1$.
\end{enumerate}

From (ii) and (iii), we conclude that
\begin{align*} 
\left| \frac{R(x)}{R(y)}-1 \right| \le \delta(\ep)
\end{align*} 
for any $x \in U_i$ and $y \in U_{i+1}$. Hence $g_{p_i}$ is $\delta$-close to $g_{p_{i+1}}$ in $C^{[\delta^{-1}]}$-topology, on $U_i \cap U_{i+1}$. As discussed above, there exists an irreducible compact Hermitian symmetric space $N_0=N_0^L$ satisfying
\begin{align} 
c_1 \le |N_0|_{\bar g} \le c_2 \label{E304aa}
\end{align} 
for two positive constants $c_1$ and $c_2$ depending only on $m$ and $\kappa$, see \eqref{E303bb}. In addition, for any point $p \in V_1$, there exists a submanifold $\Sigma_{1,p}$ containing $p$, which is a copy of $N_0$. Moreover, it follows from Lemma \ref{lem:vol} that
\begin{align*} 
\left| R^L(p_1)\int_{\Sigma_{1,p}} \frac{\omega^L}{L!} -|N_0|_{\bar g} \right | \le \delta. 
\end{align*} 
From Lemma \ref{lem: extend}, for any $p \in V_2$, we obtain a submanifold $\Sigma_{2,p}$ containing $p$, which is also a copy of $N_0$. By Lemma \ref{lem:vol2}, we have
\begin{align*} 
\int_{\Sigma_{1,p}} \frac{\omega^L}{L!} = \int_{\Sigma_{2,q}} \frac{\omega^L}{L!} 
\end{align*} 
for any $p \in V_1$ and $q \in V_2$. By iteration, we can construct $\Sigma_{i,p}$ for any $p \in V_{i}$ and $1 \le i \le K$. Moreover, we have
\begin{align} 
\left| R^L(p_i)\int_{\Sigma_{i,p}} \frac{\omega^L}{L!} -|N_0|_{\bar g} \right | \le \delta \label{E304a}
\end{align} 
for any $p \in V_{i}$ and $1 \le i \le K$. In addition,
\begin{align} 
\int_{\Sigma_{i,p}} \frac{\omega^L}{L!} = \int_{\Sigma_{i+1,q}} \frac{\omega^L}{L!} \label{E304b}
\end{align} 
for any $p \in V_i$, $q \in V_{i+1}$ and $1 \le i \le K-1$. Now we fix $x_1 \in V_1$ and $x_K \in V_K$. Combining \eqref{E304aa}, \eqref{E304a} and \eqref{E304b}, we have
\begin{align*} 
(20^L-10^L)A \le 2\delta \quad \text{and} \quad 10^LA \ge c_1-\delta,
\end{align*} 
where
\begin{align*} 
A= \int_{\Sigma_{1,x_1}} \frac{\omega^L}{L!} = \int_{\Sigma_{K,x_K}} \frac{\omega^L}{L!}.
\end{align*} 
Therefore, we obtain a contradiction if $\ep$ is sufficiently small.
\end{proof}

The same proof of Proposition \ref{prop:cano} yields the following more general result.

\begin{thm} 
For any $m$, $\kappa>0$ and $\delta>0$, there exists a number $\bar \ep=\bar \ep(m,\kappa,\delta)>0$ satisfying the following property.

Let $(M^m,g,J)$ is a complete K\"ahler manifold such that for any $p \in M$ with $R(p)\ge 1$, $(M^m,g_p,J,p)$ is $\ep$-close to an element in $\mathcal N(m,\kappa)$ and $\ep \le \bar \ep$, where $g_p=R(p)g$. If there exists $p_0 \in M$ with $R(p_0) \ge 2$, then for all $z \in M$,
\begin{align*} 
\left| \frac{R(z)}{R(p_0)}-1 \right| \le \delta.
\end{align*} 
\end{thm}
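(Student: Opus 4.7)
My plan is to run the proof of Proposition \ref{prop:cano} essentially verbatim, extracting a quantitative comparison of scalar curvatures rather than an absolute upper bound. The invariant driving the original argument, namely the product $R^L A$ (where $A = \int_{\Sigma}\omega^L/L!$ is the symplectic volume of a fiber isomorphic to the distinguished Hermitian symmetric factor $N_0$, and $L = \dim_\C N_0$), equals $|N_0|_{\bar g}$ up to an error $\delta(\ep)$ that tends to zero as $\ep \to 0$. Propagating this invariant along a chain of canonical neighborhoods forces the scalar curvatures at the two ends to agree up to a factor tending to $1$, which will contradict any prescribed deviation of size $\delta$.

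Concretely, I argue by contradiction: assume some $z \in M$ satisfies $|R(z)/R(p_0) - 1| > \delta$. Joining $p_0$ to $z$ by a unit-speed geodesic $\gamma$ and using continuity of $R$ together with $R(p_0) \ge 2$, I select a subarc $\gamma|_{[a,b]}$ on which $R \ge 1$ (so that the canonical-neighborhood hypothesis applies) and such that $R(\gamma(b))/R(\gamma(a)) = 1 \pm \delta$ exactly: take $b$ to be the first time $R$ reaches $(1\pm\delta)R(p_0)$ along $\gamma$ and $a$ the last preceding time at which $R=R(p_0)$. The hypothesis $R(p_0) \ge 2$ (rather than merely $\ge 1$) is used here to guarantee $R \ge (1-\delta)R(p_0) \ge 1$ in the decreasing case, provided WLOG $\delta \le 1/2$.

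Compactness along $\gamma|_{[a,b]}$ then produces a chain $p_1=\gamma(a), p_2, \ldots, p_K=\gamma(b)$ of overlapping canonical neighborhoods exactly as in Proposition \ref{prop:cano}. Iterating Lemma \ref{lem: extend} along the chain, with the de Rham decomposition reordered at each step, identifies a common factor $N_0$ of volume $|N_0|_{\bar g} \ge c_1(m,\kappa) > 0$, and Lemmas \ref{lem:vol} and \ref{lem:vol2} yield a single fiber volume $A$ satisfying
\begin{equation*}
\bigl| R^L(p_i)\,A - |N_0|_{\bar g} \bigr| \le \delta(\ep), \qquad 1 \le i \le K.
\end{equation*}
Comparing $i=1$ and $i=K$ gives $|R^L(p_K)/R^L(p_1) - 1| \le 2\delta(\ep)/(c_1 - \delta(\ep))$. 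On the other hand, $R^L(p_K)/R^L(p_1) = (1\pm\delta)^L$, so $|R^L(p_K)/R^L(p_1) - 1| \ge \delta$ for $L \ge 1$. Choosing $\bar\ep(m,\kappa,\delta)$ small enough that $2\delta(\ep)/(c_1-\delta(\ep)) < \delta$ produces the contradiction.

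The main technical obstacle is the bookkeeping ensuring that the \emph{same} model factor $N_0$ is tracked along the entire chain, so that the fiber volumes at widely separated points may legitimately be equated via Lemma \ref{lem:vol2}. This is precisely the content of Lemma \ref{lem: extend} applied iteratively and was already carried out inside the proof of Proposition \ref{prop:cano}; once it is in place, no new analytic input is required beyond the quantitative comparison above.
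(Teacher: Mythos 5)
Your proposal is correct and follows essentially the same route as the paper, which proves Theorem \ref{thm:cano1} by observing that the argument of Proposition \ref{prop:cano} applies verbatim; your quantitative version — propagating the near-invariant $R^L(p_i)\int_{\Sigma_{i,p}}\omega^L/L!\approx |N_0|_{\bar g}$ along a chain of overlapping charts over a subarc chosen so that the curvature ratio at its endpoints is exactly $1\pm\delta$, and using $R(p_0)\ge 2$ with $\delta\le 1/2$ to keep the subarc inside the region $R\ge 1$ where the canonical-neighborhood hypothesis applies — is precisely the intended adaptation.
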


\begin{proof}
The proof follows verbatim from Proposition \ref{prop:cano} by replacing $20$ with $\max\{R(z),R(p_0)\}$ and $10$ with $(1-\delta)\max\{R(z),R(p_0)\}$.
\end{proof}

Now, we can prove Theorem \ref{thm:main1} following the same argument in \cite[Section $5$]{CL20}. We first recall the following result from \cite[Corollary $2.1$ (b)]{N05}, which originates from \cite[Corollary $11.6$]{Pe1}.

\begin{lem} \label{lem:pe}
For every $w>0$, there exist constants $C = C(w)<\infty$ and $\tau=\tau(w)>0$ with the following properties.
Let $(M^m,g(t))_{t\in [-T,0]}$ be a (possibly incomplete) K\"ahler Ricci flow solution with $BK \ge 0$. Suppose $B_{g(0)}(x_0,r_0)$ is compactly contained in $M$ such that $|B_{g(0)}(x_0,r_0)| \ge w r_0^{2m}$ and $T\ge 2 \tau r_0^2$. Then
\begin{align*}
R(x,t) \le C r_0^{-2}
\end{align*}
for $(x,t) \in B_{g(0)}(x_0,r_0/4) \times [-\tau r_0^2,0]$.
\end{lem}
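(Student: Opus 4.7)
The plan is to prove Lemma \ref{lem:pe} via a blow-up contradiction and the classification of $\kappa$-solutions established in Theorem \ref{thm:ka}. Assume the statement fails; by scaling invariance of the K\"ahler Ricci flow we may normalize $r_0=1$. Then there exists a sequence of K\"ahler Ricci flows $(M_k,g_k(t))_{t\in[-T_k,0]}$ with $BK\ge 0$ and $|B_{g_k(0)}(x_{0,k},1)|_{g_k(0)}\ge w$, together with points $(y_k,s_k)\in B_{g_k(0)}(x_{0,k},1/4)\times[-\tau,0]$ at which $R(y_k,s_k)\to\infty$ (keeping $\tau$ small and fixed, and letting $C$ grow). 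A Perelman-type point-picking procedure (cf.\ Lemma $10.1$ of \cite{Pe1}) refines this to a sequence $(x_k,t_k)$ with $Q_k:=R(x_k,t_k)\to\infty$, $t_k\in[-2\tau,0]$, and curvature bound $R\le 2Q_k$ on the parabolic neighborhood $B_{g_k(t_k)}(x_k,A_kQ_k^{-1/2})\times[t_k-A_kQ_k^{-1},t_k]$ with $A_k\to\infty$.

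Next, I would rescale and extract a limiting $\kappa$-solution. Set $\tilde g_k(t):=Q_kg_k(t_k+Q_k^{-1}t)$. This rescaled K\"ahler Ricci flow preserves $BK\ge 0$, satisfies $\tilde R(x_k,0)=1$, and has uniformly bounded curvature on parabolic neighborhoods of size $A_k\to\infty$. For Hamilton's compactness theorem to apply, one needs a uniform $\kappa$-noncollapsing estimate; this follows from the initial volume bound combined with $\text{Ric}\ge 0$ (which via $\partial_tg=-2\text{Ric}\le 0$ makes volumes non-increasing in $t$) and Perelman's reduced-volume monotonicity, yielding $\kappa=\kappa(w)>0$ uniformly in $k$ and on all scales. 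A subsequence then converges smoothly in the pointed Cheeger-Gromov sense to a complete ancient K\"ahler Ricci flow $(\tilde M_\infty,\tilde g_\infty(t),x_\infty)_{t\in(-\infty,0]}$ with $BK\ge 0$, bounded curvature, $\kappa$-noncollapsing, and $\tilde R(x_\infty,0)=1$, i.e., a nonflat $\kappa$-solution in the sense of Definition \ref{def:flow 2}.

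Theorem \ref{thm:ka} forces the limit to be isometrically biholomorphic to a finite quotient of $N^\ell\times\C^{m-\ell}$ with $N$ a simply-connected compact Hermitian symmetric space. The contradiction comes from the asymptotic volume ratio. The Bishop-Gromov inequality and the initial volume bound yield, before rescaling, $|B_{g_k(t_k)}(x_k,\rho)|_{g_k(t_k)}\ge c(w)\rho^{2m}$ on scales $\rho\le O(1)$, which after rescaling to $\tilde g_k(0)$ becomes $|B_{\tilde g_k(0)}(x_k,A)|_{\tilde g_k(0)}\ge c(w)A^{2m}$ for $A\le O(\sqrt{Q_k})$. Passing to the limit, $\tilde g_\infty(0)$ has asymptotic volume ratio bounded below by $c(w)/\omega_{2m}>0$. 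However, a finite quotient of $N^\ell\times\C^{m-\ell}$ with $\ell\ge 1$ has vanishing asymptotic volume ratio since volumes of large balls grow as $A^{2(m-\ell)}$ rather than $A^{2m}$. Therefore $\ell=0$, the limit is flat, contradicting $\tilde R(x_\infty,0)=1$.

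The main obstacle is the noncollapsing step, i.e.\ transferring the initial volume bound at $t=0$ into a uniform $\kappa$-noncollapsing at the space-time points $(x_k,t_k)$ with $t_k<0$, on all scales down to $Q_k^{-1/2}$. The $BK\ge 0$ hypothesis simplifies matters because $\text{Ric}\ge 0$ controls the backward evolution of volumes and delivers Bishop-Gromov comparison at each time slice, but a careful invocation of Perelman's reduced-volume technology is still required to obtain $\kappa$-noncollapsing at arbitrarily small scales with a constant $\kappa$ depending only on $w$ and $m$.
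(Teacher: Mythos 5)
The paper does not prove Lemma \ref{lem:pe} at all: it is imported verbatim from \cite[Corollary 2.1(b)]{N05}, the K\"ahler version of \cite[Corollary 11.6(b)]{Pe1}, so there is no internal proof to compare with and you are in effect reproving Perelman's local estimate. Your skeleton --- point-picking at a high-curvature point, blow-up, extraction of a complete ancient limit with $BK\ge 0$, and a contradiction from positive asymptotic volume ratio (AVR) --- is indeed the standard shape of that argument, and the endgame is sound: a nonflat finite quotient of $N^k\times\C^{m-k}$ with $k\ge 1$ has vanishing AVR, so a limit with AVR bounded below by $c(w)>0$ and $R(x_\infty,0)=1$ is impossible. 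Two remarks, though: invoking the full classification Theorem \ref{thm:ka} is not circular in this paper's ordering, but it is much heavier than needed (all you use is that a nonflat ancient solution with bounded nonnegative curvature has zero AVR, which is exactly the ingredient Perelman and Ni prove directly); and to call the limit a $\kappa$-solution at all you must show it has bounded curvature on all of $(-\infty,0]$, which requires the point-picked parabolic neighborhoods to expand in time as well as in space after rescaling --- a step you assert but do not argue.

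The genuine gap is the one you flag yourself and then dispose of in two sentences: transferring the volume hypothesis, which is given only at $t=0$, at the point $x_0$, and at the scale $r_0$, to the space-time points $(x_k,t_k)$ with $t_k<0$ and to scales down to $Q_k^{-1/2}$. Neither of your justifications works as stated. Monotonicity under $\mathrm{Ric}\ge 0$ gives $g(t)\ge g(0)$ for $t\le 0$, hence $B_{g(t)}(x_0,r_0)\subset B_{g(0)}(x_0,r_0)$; it does not prevent the volume ratios of $g(t)$-balls from collapsing, because distances can expand going backward in time and you have no curvature bound on $[t_k,0]$ (that is precisely what is being proved) with which to control the distortion via \cite[Lemma 8.3]{Pe1}. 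This is exactly why part (b) of Perelman's Corollary 11.6 --- volume bound at $t=0$ only --- is substantially harder than part (a), and why the conclusion is restricted to the interval $[-\tau r_0^2,0]$ with $\tau=\tau(w)$ small. Likewise, Perelman's reduced-volume noncollapsing is not available off the shelf here: the flow may be incomplete, and the localized versions of the $\mathcal{L}$-geodesic argument require curvature control on parabolic neighborhoods that is again part of the conclusion. So the key claim $|B_{g_k(t_k)}(x_k,\rho)|_{g_k(t_k)}\ge c(w)\rho^{2m}$ for $\rho\le O(1)$, on which both the compactness step and the positive-AVR contradiction rest, is asserted rather than proved; supplying it essentially amounts to reproducing the actual proof in \cite[Corollary 11.6]{Pe1} and \cite{N05}, which is what the paper simply cites.
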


Next, we have the following canonical neighborhood theorem.
\begin{thm} \label{thm:cano2}
Let $(M^m,g(t))_{t \in [0,2]}$ be a complete noncompact $\kappa$-noncollapsed K\"ahler Ricci flow solution with $BK \ge 0$. For any $\ep>0$, there exists a small number $\bar r>0$ satisfying the following property.

Suppose $(\bar x, \bar t) \in M \times [1,2]$ and $R(\bar x, \bar t)=r^{-2} \ge {\bar r}^{-2}$, then after rescaling the metric by the factor $r^{-2}$, the parabolic neighborhood $B_{g(\bar{t})}(\bar{x},\ep^{-1} r) \times [\bar{t}-\ep^{-1} r^2,\bar{t}]$ is $\ep$-close in $C^{[\ep^{-1}]}$-topology to a $\kappa$-solution.
\end{thm}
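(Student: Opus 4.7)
The plan is a Perelman-type argument by contradiction, with Proposition \ref{prop:cano} supplying the K\"ahler-specific input that produces bounded curvature on the limit. Assume the conclusion fails: there exist $\ep_0>0$ and a sequence $(x_i,t_i)\in M\times[1,2]$ with $R(x_i,t_i)=r_i^{-2}\to\infty$ whose rescaled parabolic neighborhoods at scale $r_i$ are not $\ep_0$-close to any $\kappa$-solution. By a standard Perelman-type point-picking argument, we may replace $(x_i,t_i)$ by a nearby sequence of \emph{locally almost worst} points, so that there exist $A_i\to\infty$ with the following property: while the conclusion still fails at $(x_i,t_i)$, it \emph{does} hold at every $(y,s)$ satisfying $d_{g(s)}(y,x_i)\le A_i r_i$, $s\in[t_i-A_i r_i^2,t_i]$ and $R(y,s)\ge 4r_i^{-2}$. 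In other words, in a large parabolic neighborhood of $(x_i,t_i)$, every point of slightly higher curvature is already modeled on a $\kappa$-solution at its own scale.

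Rescale $g_i(t)=r_i^{-2}g(t_i+r_i^2 t)$, so that $R_{g_i}(x_i,0)=1$ and the point-picking yields uniform curvature bounds on parabolic neighborhoods of $(x_i,0)$ of increasing size. Together with $\kappa$-noncollapsing (scale-invariant) and Hamilton's compactness theorem, extract a smooth pointed Cheeger--Gromov--Hamilton subsequential limit K\"ahler Ricci flow $(M_\infty,g_\infty(t),J_\infty,x_\infty)$ defined on some parabolic region containing $t=0$. The limit has $BK\ge 0$, is $\kappa$-noncollapsed, carries an integrable complex structure, and satisfies $R_{g_\infty}(x_\infty,0)=1$.

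The crux is to show that $(M_\infty,g_\infty(0))$ has uniformly bounded scalar curvature, so that Theorem \ref{thm:ka} can be applied. For any $p_\infty\in M_\infty$ with $R_{g_\infty}(p_\infty,0)\ge 10$, lift $p_\infty$ to an approximating sequence $p_i\in M$ with $R(p_i,t_i)\to R_{g_\infty}(p_\infty,0)$; the point-picking arranges that the rescaled neighborhood at $p_i$ is $\ep_0$-close to a K\"ahler $\kappa$-solution, which by Theorem \ref{thm:ka} is isometrically biholomorphic to a finite quotient of $N^k\times\C^{m-k}$. Normalizing scalar curvature to $1$ at the base point, such a quotient is an element of $\mathcal N(m,\kappa)$. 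Passing to the limit, $(M_\infty,(g_\infty)_{p_\infty},J_\infty,p_\infty)$ is itself $\ep_0$-close to an element of $\mathcal N(m,\kappa)$. If at the outset we have chosen $\ep_0\le\ep_2(m,\kappa)$ from Proposition \ref{prop:cano}, then this proposition applies to $(M_\infty,g_\infty(0),J_\infty,x_\infty)$ and yields $R_{g_\infty}(\cdot,0)\le 20$ throughout $M_\infty$.

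Finally, under $BK\ge 0$ we have $\partial_t R=\Delta R+2|\text{Ric}|^2\ge 0$, so scalar curvature is pointwise nondecreasing in $t$, and the bound at $t=0$ propagates backward; combined with $BK\ge 0$ this controls the full curvature tensor on the time interval of existence of the limit. Iterating Hamilton's compactness theorem extends the limit to a complete ancient K\"ahler Ricci flow on $(-\infty,0]$ that is $\kappa$-noncollapsed with $BK\ge 0$ and uniformly bounded curvature, i.e., a $\kappa$-solution in the sense of Definition \ref{def:flow 2}. But then for all sufficiently large $i$, the rescaled parabolic neighborhood at $(x_i,t_i)$ is $\ep_0$-close to this $\kappa$-solution, contradicting the choice of $(x_i,t_i)$. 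The main obstacle, and the only step that is not standard Ricci-flow machinery, is to calibrate the point-picking so that the \emph{high-curvature-modeled-on-$\mathcal N$} hypothesis of Proposition \ref{prop:cano} survives the passage to the limit with a uniform closeness constant; once that is in place the rest proceeds by established compactness arguments.
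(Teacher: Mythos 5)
Your overall strategy coincides with the paper's (which follows \cite[Theorem $5.4$]{CL20}): contradiction, Perelman-type point-picking, rescaling, extraction of a limit, Proposition \ref{prop:cano} to bound the curvature of the limit, backward extension to a $\kappa$-solution, and a contradiction with the choice of $(x_i,t_i)$. However, two of your steps do not hold as stated. First, the assertion that ``the point-picking yields uniform curvature bounds on parabolic neighborhoods of $(x_i,0)$ of increasing size'' is not a consequence of point-picking: it only guarantees that nearby spacetime points with curvature $\ge 4r_i^{-2}$ are themselves modeled on $\kappa$-solutions at their own scale; it gives no a priori bound on the curvature at bounded rescaled distance from $x_i$, and without such a bound Hamilton's compactness theorem cannot be invoked. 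This is precisely the ``bounded curvature at bounded distance'' difficulty, and the paper handles it with Lemma \ref{lem:pe} (Perelman's Corollary $11.6$ in Ni's K\"ahler form): $BK\ge 0$ together with $\kappa$-noncollapsing converts volume lower bounds into local curvature estimates, which is what makes the limit $(M_\infty,g_\infty,x_\infty)$ a complete manifold (see \cite[Theorem $5.4$, Step 2]{CL20}). Your proposal never uses this ingredient, and identifies the ``main obstacle'' elsewhere (calibration of closeness constants), which is comparatively routine once one reduces to $\ep_0\le \ep_2$ as you do.

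Second, the backward extension rests on the claim that $\partial_t R=\Delta R+2|\mathrm{Ric}|^2\ge 0$ makes $R$ pointwise nondecreasing in $t$. That inference is false: $\Delta R$ has no sign, so nonnegativity of the reaction term does not give pointwise monotonicity. Pointwise monotonicity of $R$ for an ancient flow is the trace Harnack inequality of Cao \cite{Cao92}, which requires a complete solution with bounded nonnegative bisectional curvature on the relevant time interval --- exactly what you are in the process of establishing for the limit, so invoking it at this stage is circular. The paper instead extends the limit backwards as in \cite[Theorem $5.4$, Step 4]{CL20}, again relying on Lemma \ref{lem:pe}-type local estimates; the Harnack argument only appears later, in Proposition \ref{prop:bdd}, once the canonical neighborhood theorem is available. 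The remaining components of your outline --- reducing to $\ep_0\le\ep_2(m,\kappa)$, converting the $\kappa$-solution models into elements of $\mathcal N(m,\kappa)$ via Theorem \ref{thm:ka}, and applying Proposition \ref{prop:cano} to the limit at $t=0$ --- do match the paper's argument.
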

\begin{proof}
The proof is similar to the proof of \cite[Theorem $5.4$]{CL20}, and we sketch it for readers' convenience.

Assume that there exists an $\bar \ep>0$ such that the conclusion does not hold for a sequence $(x_i,t_i) \in M \times [1,2]$ with $Q_k=R(x_k,t_k) \to \infty$. By a point-picking argument, we can assume that for any $A>0$ and any $(y,t) \in B_{g(t_k)}(x_k,AQ_k^{-1/2}) \times [t_k-AQ_k^{-1},t_k]$ with $R(y,t) \ge 2Q_k$, the conclusion of the theorem holds.

Next, we consider the spacetime limit of $(M,g_k(t),x_k)$ for $t \le 0$, where $g_k(t)=Q_kg(Q_k^{-1}t+t_k)$. From Lemma \ref{lem:pe}, one can show that the limit $(M_{\infty},g_{\infty},x_{\infty})$ of $(M,g_k(0),x_k)$ is a complete K\"ahler manifold with $BK \ge 0$ and is $\kappa$-noncollapsed, see \cite[Theorem $5.4$, Step 2]{CL20} for details.

By our assumption, $(M_{\infty},g_{\infty},x_{\infty})$ satisfies the assumption of Proposition \ref{prop:cano}, if $\bar \ep$ is sufficiently small. Therefore, we conclude that he curvature of the limit $(M_{\infty},g_{\infty},x_{\infty})$ is uniformly bounded. Now, $(M_{\infty},g_{\infty}(t))$ can be extended backward to an ancient solution with uniformly bounded curvature, see \cite[Theorem $5.4$, Step 4]{CL20}. In other words, the limit $(M_{\infty},g_{\infty}(t))_{t \in (-\infty,0]}$ is a $\kappa$-solution to the K\"ahler Ricci flow.

In sum, we obtain a contradiction, and the proof is complete.
\end{proof}

Now, we can prove the boundedness of the curvature.

\begin{prop} \label{prop:bdd}
Let $(M^m,g(t))_{t \in (-\infty,0]}$ be a $\kappa$-noncollapsed, noncompact, complete ancient solution to the K\"ahler Ricci flow with $BK \ge 0$. Then the curvature of $(M,g(t))_{t \in (-\infty,0]}$ is uniformly bounded. 
\end{prop}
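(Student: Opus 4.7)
The plan is to combine Theorem \ref{thm:cano2}, Theorem \ref{thm:ka}, and Proposition \ref{prop:cano} to force a uniform scalar curvature bound at time $0$, and then to promote this to a bound on $|\Rm|$ throughout spacetime using $BK \ge 0$.

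First I would reduce the problem to controlling $R(\cdot,0)$. By Theorem \ref{thm:pic2} the flow has weakly PIC$_2$, so Hamilton's trace Harnack inequality applies; sending the Harnack's base time to $-\infty$ along our ancient flow yields the pointwise inequality $\partial_t R \ge 0$, hence $\sup_{M \times (-\infty,0]} R = \sup_M R(\cdot,0)$. On the spatial side, $BK \ge 0$ gives $0 \le R_{i \bar i j \bar j} \le R$ in any unitary frame, and in combination with the first Bianchi identity this yields $|\Rm| \le C(m) R$. Thus it suffices to prove $\sup_M R(\cdot,0) < \infty$.

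Next I would fix $\ep \le \ep_2(m,\kappa)$ from Proposition \ref{prop:cano} and apply Theorem \ref{thm:cano2} to our flow restricted to a time interval of length $2$ ending at $0$ (time-shifted to $[0,2]$). This yields a threshold $\bar r = \bar r(\ep)$ such that whenever $R(\bar x, 0) \ge \bar r^{-2}$, the parabolic neighborhood of $(\bar x, 0)$ rescaled by $R(\bar x, 0)$ is $\ep$-close in $C^{[\ep^{-1}]}$-topology to a $\kappa$-solution. By Theorem \ref{thm:ka} this $\kappa$-solution is isometrically biholomorphic to a finite quotient of $N^k \times \C^{m-k}$ with $N$ a simply-connected, compact, Hermitian symmetric space. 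The noncompactness of $M$ rules out $k = m$: a compact model has uniformly bounded diameter, so for $\ep$ sufficiently small the closeness would provide a diffeomorphism from an open, precompact, connected subset of $M$ onto (essentially) the entire compact model, forcing $M$ itself to be compact by connectedness. Hence $k < m$. The spatially constant scalar curvature of the model is normalised to $1$ by our choice of rescaling, so each of its time slices belongs to $\mathcal N(m,\kappa)$, and $(M, R(\bar x, 0) g(0), J, \bar x)$ is $\ep$-close, in the sense of Definition \ref{def:close2}, to an element of $\mathcal N(m,\kappa)$.

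Suppose now, for contradiction, that $\sup_M R(\cdot,0) = +\infty$. I pick $p_0 \in M$ with $A := R(p_0,0) \ge \bar r^{-2}$ and set $\hat g := A \cdot g(0)$, so that $\hat R(p_0) = 1$. For any $p$ with $\hat R(p) \ge 10$ we have $R(p,0) = A \hat R(p) \ge \bar r^{-2}$, and the preceding paragraph gives that $(M, \hat R(p) \hat g, J, p) = (M, R(p,0) g(0), J, p)$ is $\ep$-close to an element of $\mathcal N(m,\kappa)$. The hypotheses of Proposition \ref{prop:cano} are therefore satisfied for $(M, \hat g, J, p_0)$, and its conclusion $\hat R \le 20$ on $M$ becomes $R(\cdot,0) \le 20 A < +\infty$, contradicting unboundedness. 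The main technical point in my view is ensuring that the Riemannian $C^{[\ep^{-1}]}$-closeness delivered by Theorem \ref{thm:cano2} upgrades to the K\"ahler closeness required by Definition \ref{def:close2}: one must verify that $\varphi^* \bar J$ matches $J$ at the same order, which is standard for smooth convergence of K\"ahler manifolds under Ricci flow but deserves explicit bookkeeping.
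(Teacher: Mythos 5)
Your overall strategy --- combining Theorem \ref{thm:cano2} and Theorem \ref{thm:ka} to verify the hypotheses of Proposition \ref{prop:cano} and thereby bound the scalar curvature on a time slice, then using monotonicity of $R$ in time to sweep the bound backwards --- is the paper's strategy, and your application of Proposition \ref{prop:cano} with base point $p_0$ normalized to $\hat R(p_0)=1$ is a correct (indeed more explicit) version of the contradiction the paper only sketches. However, there is a genuine ordering problem in your first step. Cao's trace Harnack inequality \cite{Cao92} (and Hamilton's version under weakly PIC$_2$) is proved for complete solutions with \emph{bounded} curvature on compact time intervals; invoking it at the outset, before any curvature bound is known, to conclude $\partial_t R\ge 0$ and hence $\sup_{M\times(-\infty,0]}R=\sup_M R(\cdot,0)$, is circular --- boundedness of the curvature is precisely what you are trying to prove. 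The paper avoids this by running the canonical-neighborhood argument at \emph{every} time $t_0\le 0$ (your step with $\hat g=A\,g(0)$ works verbatim with $0$ replaced by $t_0$, applying Theorem \ref{thm:cano2} on $M\times[t_0-2,t_0]$), then upgrading slice-wise bounds to bounds on compact time intervals via \cite[Lemma $5.5$]{CL20}, and only \emph{then} invoking the Harnack inequality to conclude that $R$ is nondecreasing in $t$, which gives the uniform bound. As written, your reduction to $t=0$ is unjustified and your argument yields no control for $t<0$.

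A second, more local flaw: your justification that the canonical-neighborhood model cannot be compact (``a compact model has uniformly bounded diameter'') is false. A compact $\kappa$-solution is a finite quotient of a product of compact Hermitian symmetric spaces whose factors may sit at very different scales at the reference time (for instance a very large, nearly flat $\CP^1$ factor times a unit-size one); such products are $\kappa$-noncollapsed for a fixed $\kappa$, yet after normalizing the scalar curvature to $1$ at a point they have arbitrarily large diameter, so $\ep$-closeness on a ball of radius $\ep^{-1}$ need not exhibit the compactness of the model and your connectedness argument does not run. The correct way to exclude compact (and flat) models, implicit in the proof of Theorem \ref{thm:cano2}, is that the model $\kappa$-solution arises there as a pointed smooth limit of rescalings of the fixed noncompact manifold $M$ itself; a compact pointed limit would force $M$ to be diffeomorphic to it, contradicting noncompactness, while flatness is excluded because $R=1$ at the base point. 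With that, Theorem \ref{thm:ka} together with a harmless rescaling (the model's constant scalar curvature is only $\delta(\ep)$-close to $1$, not exactly $1$) places the model in $\mathcal N(m,\kappa)$, and the rest of your application of Proposition \ref{prop:cano} goes through.
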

\begin{proof}
The proof is similar to \cite[Proposition $5.6$]{CL20}, and we sketch it for readers' convenience.

From Theorem \ref{thm:pic2}, we know that $(M,g(t))$ has weakly PIC$_2$. We first prove that for any $t_0 \le 0$, the curvature at the time $t_0$ is uniformly bounded. Otherwise, there exists a sequence $p_i$ such that $Q_i=R(p_i,t_0) \to \infty$. By applying Theorem \ref{thm:cano2} on $M \times [t_0-2,t_0]$, we conclude that $(M,Q_ig(t_0),p_i)$ converges smoothly to a $\kappa$-solution. However, this contradicts Proposition \ref{prop:cano}.

Since $(M,g(t))$ has bounded curvature on each time slice, it has bounded curvature on each compact time interval by \cite[Lemma $5.5$]{CL20}. Therefore the trace Harnack inequality holds, see \cite{Cao92} and hence the curvature is uniformly bounded since $R$ is nondecreasing along $t$.
\end{proof}

\emph{Proof of Theorem \ref{thm:main1}}: Theorem \ref{thm:main1} follows immediately from Theorem \ref{thm:ka} and Proposition \ref{prop:bdd}.

From \cite[Theorem $1.3$ (i)]{LN20}, we know that any complete ancient solution to the K\"ahler Ricci flow with nonnegative orthogonal bisectional curvature automatically has $BK \ge 0$. Moreover, a K\"ahler manifold has nonnegative orthogonal bisectional curvature if it has weakly PIC, see \cite[Proposition $9.18$]{Bre10b}. Therefore, the following corollary is immediate from Theorem \ref{thm:main1}.

\begin{cor}
Let $(M^m,g(t))_{t \in (-\infty,0]}$ be a $\kappa$-noncollapsed, complete ancient solution to the K\"ahler Ricci flow with nonnegative orthogonal bisectional curvature or weakly \emph{PIC}. Then it is isometrically biholomorphic to a finite quotient of $N^k \times \C^{m-k}$, where $N$ is a compact Hermitian symmetric space.
\end{cor}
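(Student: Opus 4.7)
The plan is to reduce the two curvature hypotheses (nonnegative orthogonal bisectional curvature, or weakly PIC) to the nonnegative bisectional curvature hypothesis of Theorem \ref{thm:main1}, and then invoke that theorem directly. The entire structural work (classification, boundedness of curvature, Type-I improvement, etc.) has already been done upstream, so this corollary is essentially an implication between curvature positivity conditions in the K\"ahler setting.

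First I would dispose of the weakly PIC case. By \cite[Proposition $9.18$]{Bre10b}, on any K\"ahler manifold weakly PIC implies nonnegative orthogonal bisectional curvature. Therefore, up to this pointwise linear-algebra fact about curvature operators on $\Lambda^2_+$ versus $\Lambda^{1,1}$, the two hypotheses in the corollary collapse into one, namely nonnegative orthogonal bisectional curvature. So I only need to handle that case.

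Next I would apply \cite[Theorem $1.3$(i)]{LN20}, which asserts that any complete ancient solution to the K\"ahler Ricci flow with nonnegative orthogonal bisectional curvature automatically has $BK \ge 0$. This is a curvature improvement statement in the spirit of Theorem \ref{thm:pic2} above: a strong maximum principle / invariance-of-cones argument promotes the weaker curvature positivity, available at the ancient-time level, to the stronger bisectional form. With this in hand the flow $(M^m, g(t))$ satisfies the hypotheses of Theorem \ref{thm:main1}: it is $\kappa$-noncollapsed, complete, ancient, and has $BK \ge 0$.

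Finally, Theorem \ref{thm:main1} immediately yields that $(M^m,g(t))$ is isometrically biholomorphic to a finite quotient of $N^k \times \C^{m-k}$ with $N$ a simply-connected, compact, Hermitian symmetric space, which is the desired conclusion. There is no genuine obstacle to overcome here, since the two technical inputs (Brendle's implication weakly PIC $\Rightarrow$ nonnegative orthogonal bisectional curvature, and the Li-Ni improvement to $BK\ge 0$) are quoted as black boxes from the literature; the only care needed is to verify that the hypotheses of \cite[Theorem $1.3$(i)]{LN20} are consistent with the ones in our corollary (in particular, completeness of the ancient solution and no required a priori bound on curvature beyond what is already ensured by Proposition \ref{prop:bdd}).
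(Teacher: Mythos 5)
Your proposal is correct and follows exactly the paper's own route: weakly PIC implies nonnegative orthogonal bisectional curvature by \cite[Proposition $9.18$]{Bre10b}, then \cite[Theorem $1.3$(i)]{LN20} upgrades this to $BK \ge 0$ for a complete ancient solution, and Theorem \ref{thm:main1} finishes the argument. No differences worth noting.
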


\section{Further discussion}
In this section, we propose the following conjecture, of which Theorem \ref{thm:main2} is a special case.

\begin{conj}
Let $(M^n,g(t))_{t \in (-\infty,0]}$ be a $\kappa$-noncollapsed, Type-I, irreducible, complete ancient solution to the Ricci flow. Then it is isometric to a Ricci shrinker, up to scaling.
\end{conj}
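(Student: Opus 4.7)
The plan is to approach this conjecture through the asymptotic Ricci shrinker, bypassing the weak PIC$_2$ hypothesis that powered Theorem \ref{thm:main2}. By Naber's compactness theorem \cite{Na10}, invoked exactly as in Lemma \ref{lem:asym}(ii), the Type-I and $\kappa$-noncollapsing hypotheses alone guarantee that for every sequence $\tau_i \to \infty$ the rescaled flows $g_i(t) := \tau_i^{-1} g(\tau_i t)$ subconverge smoothly in the pointed Cheeger-Gromov-Hamilton sense to a nonflat Ricci flow associated to a complete gradient shrinking soliton $(M_\infty, g_\infty, f_\infty)$. The goal is to show that the original $(M, g(t))$ is itself isometric, up to scaling and a time-dependent diffeomorphism, to this shrinker flow.

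First I would use irreducibility to constrain the asymptotic model. If $M_\infty$ admitted a nontrivial de Rham splitting, the corresponding parallel structure would be present on successively larger parabolic balls of $(M, g_i(t))$, and a backward uniqueness argument in the spirit of Kotschwar should force a global parallel structure on $(M, g(t))$, contradicting irreducibility. Hence every subsequential asymptotic limit of $(M, g(t))$ must itself be an irreducible Ricci shrinker.

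Next I would exploit Perelman's $\mu$-entropy: for any ancient Ricci flow, $\mu(g(t), -t)$ is monotone nondecreasing in $t$ and bounded above by $0$, and its limit as $t \to -\infty$ coincides with the entropy of any asymptotic shrinker by the argument of Lemma \ref{lem:asym}(ii). If one can prove that this limit is also attained at some finite time $t_0$, then the rigidity case of Perelman's monotonicity formula identifies $(M, g(t))$ as a gradient shrinking soliton, finishing the proof. In the irreducible setting I would try to force constancy of $\mu(g(t), -t)$ by showing that an entropy gap on some interval would be incompatible with a common irreducible asymptotic model at both $-\infty$ and at $t_0$, via a tangent-flow comparison applied after further rescaling.

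The principal obstacle will be uniqueness of the asymptotic shrinker independent of the rescaling sequence $\tau_i$, together with the passage from asymptotic agreement to global isometry. Tangent-flow uniqueness for Ricci flow is currently available only in restricted settings (Colding-Minicozzi for generic cylindrical tangent flows, and Kotschwar-Wang style results for certain cone or soliton models), and without such a theorem one cannot in general deduce from ``modelled at every scale by a shrinker'' that the flow is itself a shrinker. Resolving this step in the generality of the conjecture therefore appears to require either a new rigidity principle adapted to irreducible Type-I ancient solutions, or a substantial extension of existing tangent-flow uniqueness results; this is where I expect the main difficulty to lie.
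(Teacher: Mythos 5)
This statement is not proved in the paper: it is posed explicitly as an open conjecture, of which Theorem \ref{thm:main2} (the weakly PIC$_2$ case) is the only case the paper settles, and that by a completely different mechanism --- the classification of shrinkers with weakly PIC$_2$ (Lemma \ref{lem:shrinker}), the finiteness and chart analysis of the moduli space $\mathcal M(n,\kappa)$, a gluing of local fibrations, and the Borel--Serre obstruction. Your proposal is therefore not comparable to a proof in the paper, and, as you yourself concede in the last paragraph, it is not a proof either: the decisive step is left open. Let me point at the two places where the argument genuinely breaks, beyond the difficulty you name.

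First, the reduction ``if the asymptotic shrinker splits, then backward uniqueness in the spirit of Kotschwar forces a parallel splitting of $(M,g(t))$, contradicting irreducibility'' is unsupported. Backward-uniqueness theorems require two flows that \emph{coincide on a time slice}; here the parallel structure exists only on the blow-down limit produced by \cite{Na10}, i.e.\ after passing to a subsequence $\tau_i\to\infty$ and to a pointed limit, and there is no finite-time identification to propagate backwards. Being $\ep$-close to a split model on larger and larger parabolic balls does not produce an exactly parallel tensor on $M$, and no known rigidity statement converts approximate splitting at all scales into exact splitting without strong curvature hypotheses (this is precisely what the paper's Claims 1--3 in Theorem \ref{thm:noncpt} labor to exploit, and even there only after the PIC$_2$ classification pins down the models). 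Second, the entropy step is circular as stated: by Perelman's rigidity \cite{Pe1}, the limit of $\mu(g(t),-t)$ as $t\to-\infty$ being attained at a finite time $t_0$ is \emph{equivalent} to $(M,g(t))$ being a gradient shrinker, so this is a restatement of the conjecture rather than a route to it. For a non-soliton ancient flow the entropy is strictly increasing and its supremum is attained only in the limit; the fact that all tangent flows at $-\infty$ are shrinkers (automatic from \cite{Na10} and nonflatness via \cite{CZ11}) does not force constancy, and ``a common irreducible asymptotic model at $-\infty$ and at $t_0$'' is not available because nothing guarantees the flow near the finite time $t_0$ resembles any shrinker at all. There are also technical issues you gloss over in the noncompact setting (finiteness of $\mu$, existence of minimizers, $\mu\le 0$), but the essential gap is that uniqueness of the tangent flow plus an ``asymptotic-to-global'' rigidity principle is exactly the content of the conjecture, and neither your first reduction nor the entropy comparison supplies it.
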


\vskip10pt

Yu Li, Institute of Geometry and Physics, University of Science and Technology of China, No. 96 Jinzhai Road, Hefei, Anhui Province, 230026, China; Hefei National Laboratory, No. 5099 West Wangjiang Road, Hefei, Anhui Province, 230088, China; E-mail: yuli21@ustc.edu.cn. \\

\end{document}